\newtheorem{satz}{Theorem}[section]
\newtheorem{thm}[satz]{Theorem}
\newtheorem{lemma}[satz]{Lemma}
\newtheorem{kor}[satz]{Corollary}
\newtheorem{prop}[satz]{Proposition}
\theoremstyle{definition}
\newtheorem{Def}[satz]{Definition}
\theoremstyle{remark}
\newtheorem{bem}[satz]{Remark}
\newcommand{\dq}{\textrm{\textit{\dj}} }                                           % durchgestrichenes d
\newcommand{\<}[1]{\langle #1 \rangle}                                    %<.> 
\newcommand{\op}{OP}                                             %OP
\newcommand{\e}{\varepsilon}                                              %\varepsilon
\newcommand{\supp}{\textrm{supp }}                                        %Träger einer Funktion  %%%%% falls auch in kursiven Umgebungen eine gerade Schrift gewünscht wird, muss statt \textrm folgender Befehl verwendet werden: \textnormal
\newcommand{\Sallg}[5]{S^{#1}_{#2,#3}(\R^{#4}\times\R^{#5})}              %glatte Symbole
\newcommand{\Sn}[3]{\Sallg{#1}{#2}{#3}{n}{n}}                             %glatte Symbole in R^n x R^n
\newcommand{\pa}[1]{\partial_{\xi}^{#1}}                                  %Partielles Ableiten nach \xi
\newcommand{\p}{\partial}
\newcommand{\pax}[2]{\partial_{\xi}^{#1}\partial_{x}^{#2}}                %Partielles Ableiten nach \xi^{\alpha} und nach x \betha mal
\newcommand{\R}{\mathbb{R}}                                               %R
\newcommand{\Rn}{\mathbb{R}^n}                                            %R^n
\newcommand{\RnRn}{\R^n \times \R^n}                                      %R^n x R^n
\newcommand{\RnRnx}[2]{\R^n_{#1} \times \R^n_{#2}}                         %R^n_y x R^n_z
\newcommand{\RnRnRnRn}{\RnRn \times \RnRn}                                %R^n x R^n x R^n x R^n
\newcommand{\intr}{\int \limits_{\Rn} }                                   %Integral über die reellen Zahlen^n
\newcommand{\osint}{\textrm{Os\hspace*{0,1cm}-}\hspace{-0,15cm}\iint}     %Os-Integral 
\newcommand{\osiint}{\textrm{Os\hspace*{0,1cm}-}\hspace{-0,15cm}\iiiint}  %Os-Integral 
\newcommand{\s}{\mathcal{S}(\R^n)}                                        %Raum der schnell fallenden Funktionen
\newcommand{\sindo}[1]{\mathcal{S}(\R^n_{#1}) }                           %Raum der schnell fallenden Funktionen (R^n_y)
\newcommand{\sd}{\mathcal{S'}(\R^n)}                                      %Raum der schnell fallenden Funktionen
\newcommand{\N}{\mathbb{N}}                                               %Natürliche Zahlen
\newcommand{\Z}{\mathbb{Z}}                                               %ganze Zahlen
\newcommand{\Non}{\mathbb{N}_0^n}                                               %Natürliche Zahlen_0^n
\newcommand{\C}{\mathbb{C}}                                               %Komplexe Zahlen
\title{Fredholm Property of Non-Smooth Pseudodifferential Operators}
\author{Helmut Abels\footnote{Fakult\"at f\"ur Mathematik,  
Universit\"at Regensburg,
93040 Regensburg,
Germany, e-mail: {\sf helmut.abels@mathematik.uni-regensburg.de}}\ \ and Christine Pfeuffer\footnote{Fakult\"at f\"ur Mathematik,  
Universit\"at Regensburg,
93040 Regensburg,
Germany}}
\begin{document}

\maketitle

\begin{abstract}
  In this paper we prove sufficient conditions for the Fredholm property of a non-smooth pseudodifferential operator $P$ which symbol is in a Hölder space with respect to the spatial variable. As a main ingredient for the proof we use a suitable symbol-smoothing. % introduced in \cite[Section 1.3]{Taylor2}.
%   In this paper we discuss some spectral invariance results for non-smooth pseudodifferential operators with coefficients in Hölder spaces. In analogy to the proof in the smooth case of Beals and Ueberberg, cf.\;\cite{Ueberberg}, \cite{Beals}, we use the characterization by commutators in the non-smooth case to get such a result. The main new difficulties are the limited mapping properties of pseudodifferential operators with non-smooth symbols and the fact, that in general the composition of two non-smooth pseudodifferential operators is not a pseudodifferential operator. 
%   
%   In order to obtain our results we improve the commutator characterization given by the authors in \cite{Paper1}.
\end{abstract}

\noindent
{\bf Key words:} Non-smooth pseudodifferential operators, Fredholm property \\
 {\bf AMS-Classification:} 35S05, 47B30,  47G30

\section{Introduction}

Fredholm operators are often called nearly invertible operators. Because of this reason they play an important role in the field of partial differential equations in order get existence and uniqueness results. Hence great effort already was spent to get some conditions for the Fredholmness of smooth pseudodifferential operators with symbols in the Hörmander-class $\Sn{m}{\rho}{\delta}:= \bigcap\limits_{M \in \N} S^m_{\rho, \delta}(\RnRn; M)$ where $0 \leq \rho, \delta \leq 1$ and $m \in \R$. The symbol-class $S^m_{\rho, \delta}(\RnRn; M)$ consists of all $M-$times continuous differentiable functions $a: \RnRn \rightarrow \C$ with are smooth with respect to the spatial variable such that for all $k \in \N_0$
\begin{align*}
  |a|^{(m)}_k := \max_{|\alpha|\leq  \min\{k, M\},|\beta|\leq k} \sup_{x, \xi \in \R^n}|\pax{\alpha}{\beta} a(x,\xi)|\<{\xi}^{-(m-\rho|\alpha|+\delta|\beta|)} < \infty.
\end{align*} 
For every symbol $a \in S^m_{\rho, \delta}(\RnRn; M)$  we define the associated pseudodifferential operator via
\begin{align}\label{Def1}
		\op (a)u(x):= a(x,D_x) u(x) := \intr e^{ix \cdot \xi} a(x,\xi) \hat{u}(\xi) \dq \xi \qquad \forall u \in \s , x \in \Rn,
\end{align}
where $\s$ denotes
the Schwartz space, i.\,e.\,, the space of all rapidly decreasing smooth functions and $\hat{u}$ is the Fourier transformation of $u$. In \cite{KohnNirenberg} Kohn and Nirenberg showed, that the ellipticity of a classical smooth pseudodifferential operator is necessary for its Fredholm property. Apart from necessary conditions Kumano-Go developed in \cite[Theorem 5.16]{KumanoGo} some sufficient conditions for the Fredholmness of smooth pseudodifferential operators: He showed that  pseudodifferential operators with so called \textit{slowly varying} smooth symbols of order $m$ are under certain conditions Fredholm operators form $H^m_2(\Rn)$ to $L^2(\Rn)$. Here $H^s_p(\Rn)$ denotes a Bessel Potential Space for $p \in (1, \infty)$ and $s \in \R$, defined in Section \ref{section:Preliminaries}. We refer to \cite[Definition 5.11]{KumanoGo} for the definition of the class of all smooth slowly varying symbols. In \cite{Schrohe1992} Schrohe extended the result of Kumano-Go as follows: Smooth pseudodifferential 
operators with slowly varying symbols of the order zero are Fredholm operators on the weighted Sobolev spaces $H^{st}_{\gamma}(\Rn)$, see \cite{Schrohe1992} for the definition, if and only if its symbol is uniformly elliptic.    
\\

In applications also partial differential equations with non-smooth pseudodifferential operators appear. Hence we are also interested in some sufficient conditions for non-smooth pseudodifferential operators to become a Fredholm operator from $H^m_p(\Rn)$, $m \in \N_0$, to $L^p(\Rn)$. For non-smooth differential operators the Fredholm property can be characterized under certain conditions by the ellipticity of its symbol. This was announced by Cordes in \cite{Cordes1972}, completed by Illner in \cite{Illner} and partially recovered by Fan and Wong in \cite{FanWong}. This characterization of the Fredholm property was extended to the matrix-valued case in \cite{Hoermander1979} for $p=2$ and in \cite{Sun} for general $p \in (1, \infty)$. In the case $p=2$ an alternative prove by means of the tool of $C^{\ast}-$algebras, was given by Talyor in \cite{Taylor1971}. The goal of this paper is to give sufficient conditions for the Fredholm property of pseudodifferential operators $a(x,D_x)$ with a symbol  $a$  in the 
non-smooth symbol-class $C^{\tau} 
S^0_{\rho,\delta}(\Rn \times \Rn, M)$, $0 \leq  \rho, \delta \leq 1$, $M \in \N_0 \cup \{\infty\}$. For the definition of the H\"older space $C^{\tau}$  with Hölder regularity $\tau>0$, $\tau \notin \N$ we refer to Section \ref{section:Preliminaries}. A function $a:\RnRn \rightarrow \C$ is an element of the symbol-class $C^{\tau} S^m_{\rho,\delta}(\Rn \times \Rn;M)$, $m \in \R$, if the following properties hold for all $\alpha, \beta \in \N_0^n$ with $|\beta| \leq \tau$ and $|\alpha| \leq M$:
  \begin{itemize}

		\item[i)] $\p_x^{\beta} a(x, .) \in C^{M}(\Rn)$ for all $x \in \Rn$,
		\item[ii)] $\p_x^{\beta} \pa{\alpha} a \in C^{0}(\R^n_x \times \R^n_{\xi})$,
		\item[iii)] $| \pa{\alpha} a(x,\xi)  | \leq C_{\alpha}\<{\xi}^{m-\rho|\alpha|}$ for all $x,\xi \in \R^n$
		\item[iv)] $\| \pa{\alpha} a(.,\xi)  \|_{C^{\tau}(\R^n)} \leq C_{\alpha}\<{\xi}^{m-\rho|\alpha|+\delta \tau}$ for all $\xi \in \R^n$.
	\end{itemize}
Moreover,  $a:\RnRn \rightarrow \mathcal{L}(\C^N)$ is an element of the symbol-class $C^{\tau} S^m_{\rho,\delta}(\Rn \times \Rn;M;\mathcal{L}(\C^N))$, $m \in \R$, $N\in\N$, if and only if $a_{j,k}\in C^{\tau} S^m_{\rho,\delta}(\Rn \times \Rn;M)$ for all $j,k=1,\ldots,N$, where we identify $A\in \mathcal{L}(\C^N))$ with a matrix $(a_{j,k})_{j,k=1}^N\in \C^{N\times N}$ in the standard way.
For a given symbol $a$ we define the associated pseudodifferential operator as in the smooth case, cf. (\ref{Def1}). We remark that in the literature there are also some results concerning the Fredholm propterty of pseudodifferential operators on compact manifolds, see e.g. \cite{Hoermander1994}, \cite{MolahajlooWong}. Nistor even gave some criteria for the Fredholmness of pseudodifferential operators on non-compact manifolds in \cite{Nistor}.\\

In the present paper we proceed as follows: We give a short summary of all notations and function spaces needed further on in Section \ref{section:Preliminaries}. Moreover we introduce the space of amplitudes and the oscillatory integrals, there. Section \ref{section:PDO} is devoted to define all symbol-classes of pseudodifferential operators needed later on and present its properties. In particular we extend the concept of symbol-smoothing presented in \cite[Section 1.3]{Taylor2}. Together with the extention of the symbol reduction result of \cite{Paper1} for non-smooth double symbols, see Subsection \ref{Subsection:SymbolReduction}, the symbol-smoothing becomes the main ingredient in order to verify the main result of our paper: 

%siehe Blatt 1 vom 22.03.2017
\begin{thm}\label{thm:Fredholmproperty}
  Let $\tilde{m}, N \in \N$, $0<\tau <1$, $0\leq \delta <\rho \leq 1$, $m \in \R$, $M \in \N_0 \cup\{\infty\}$  and  $p \in (1,\infty) $ with $p=2$ if $\rho \neq 1$. Additionally we choose an arbitrary $\theta \in \left( 0; \min\{ (\tilde{m}+\tau)(\rho-\delta); 1\}\right)$ and $\tilde{\e} \in \left( 0, \min\{ (\rho-\delta)\tau; (\rho-\delta)(\tilde{m}+\tau)-\theta; \theta)\} \right)$. Moreover let $a \in C^{\tilde{m},\tau}\tilde{S}^{m}_{\rho,\delta}(\RnRn;M;\mathcal{L}(\C^N))$ be a symbol fulfilling the following properties for some $R>0$ and $C_0>0$:
  \begin{itemize}
    \item[1)] $|\det(a(x,\xi))|\<{\xi}^{-m}\geq C_0$ for all $x,\xi \in \Rn$ with $|x|+|\xi|\geq R$.
%     \item[2)] For all $\alpha, \beta \in \Non$ with $|\alpha|\leq M$ and $|\beta| \leq \tilde{m}$, we have
%     \begin{align*}
%       \left |\left[\pa{\alpha}D_x^{\beta} a(x,\xi)\right]\cdot a(x,\xi)^{-1} \right| \leq C_{\alpha,\beta}(x) \<{\xi}^{-\rho|\alpha| + \delta(\tilde{m} + \tau -|\beta|)} \quad \text{ for all } |x|+|\xi| \geq R,
%     \end{align*}
%     where $C_{\alpha, \beta}(x)$ is bounded and in case $|\beta| \neq 0$ we have $C_{\alpha,\beta} \xrightarrow{|x| \rightarrow \infty} 0$.
    \item[2)] $a(x, \xi) \xrightarrow{|x| \rightarrow \infty} a(\infty, \xi)$ for all $\xi \in \Rn$.
   \end{itemize}
   Then for all  $M \geq (n+2)+n\cdot\max\{1/2, 1/p\}$ and $s\in \R$ with $$(1-\rho)\frac{n}{2}-(1-\delta)(\tilde{m}+\tau)+\theta + \tilde{\e}<s<\tilde{m}+\tau$$
  the operator
  \begin{align*}
    a(x,D_x): H^{m+s}_p(\Rn)^{ N} \rightarrow H^s_p(\Rn)^{ N}
  \end{align*}
  is a Fredholm operator.
\end{thm}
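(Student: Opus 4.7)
The plan is the standard Fredholm strategy: construct a left and a right parametrix for $a(x,D_x)$ whose remainders are compact on the relevant Bessel potential spaces. Symbol-smoothing is used to reduce to the smooth calculus, condition 1) provides ellipticity for the parametrix and condition 2) supplies the decay in $x$ needed for compactness on $\R^n$.

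I would first apply the symbol-smoothing extension from Section \ref{section:PDO} to decompose
\begin{equation*}
  a = a^\# + a^b,
\end{equation*}
where $a^\#\in S^{m}_{\rho,\delta_1}(\RnRn;\mathcal{L}(\C^N))$ is smooth for some $\delta_1\in(\delta,\rho)$ and $a^b\in C^{\tilde m,\tau}\tilde S^{m-(\rho-\delta)(\tilde m+\tau)}_{\rho,\delta_1}(\RnRn;M;\mathcal{L}(\C^N))$. The precise value of $\delta_1$ can be fixed in terms of $\theta$ and $\tilde\varepsilon$; the admissible ranges in the statement ensure that $a^\#$ still satisfies the ellipticity assumption 1) (with $R$ enlarged and $C_0$ shrunk) and that $a^b$ has an order drop large enough to guarantee a positive gain of regularity $\theta+\tilde\varepsilon$ in the end.

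Using 1) for $a^\#$, I would construct a smooth parametrix: choose a cutoff $\chi\in C^{\infty}_c(\RnRn)$ which is $1$ on $\{|x|+|\xi|\leq R'\}$ with $R'$ large enough, and set $b_0(x,\xi):=(1-\chi(x,\xi))\,a^\#(x,\xi)^{-1}$. Then $b_0\in S^{-m}_{\rho,\delta_1}$ and $b_0 a^\# = I - \chi\, I$ pointwise. The usual asymptotic Neumann-series construction in the smooth Hörmander calculus improves $b_0$ to $b\in S^{-m}_{\rho,\delta_1}(\RnRn;\mathcal L(\C^N))$ such that
\begin{equation*}
  b(x,D_x)\,a^\#(x,D_x) = I + R_{\#},
\end{equation*}
where $R_{\#}$ has a symbol in $S^{-\infty}$ and, thanks to the compactly supported correction $\chi$, decaying in $x$. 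Composing with the decomposition of $a$ gives
\begin{equation*}
  b(x,D_x)\,a(x,D_x) = I + R_{\#} + b(x,D_x)\,a^b(x,D_x).
\end{equation*}
Applying the non-smooth double-symbol reduction of Subsection \ref{Subsection:SymbolReduction} to $b(x,D_x)\,a^b(x,D_x)$ produces a single non-smooth symbol of order $-(\rho-\delta)(\tilde m+\tau)$ in a $C^{\tilde m,\tau}\tilde S$-class whose operator, by the $L^p$-boundedness results collected in Section \ref{section:PDO} and the hypothesis $M\geq (n+2)+n\max\{1/2,1/p\}$, maps $H^{m+s}_p(\Rn)^N$ continuously into $H^{s+\theta+\tilde\varepsilon}_p(\Rn)^N$. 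The same construction on the right yields $a(x,D_x)\,\tilde b(x,D_x) = I + \tilde R_{\#} + \tilde R^b$ with analogous properties.

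The main obstacle is to upgrade these continuous smoothing-type remainders to \emph{compact} operators on $\R^n$. On a compact manifold a Rellich argument would be immediate, but on $\R^n$ one also needs decay of the Schwartz kernel in the spatial variable. Here condition 2) enters: splitting $a(x,\xi)=a(\infty,\xi)+a_0(x,\xi)$ with $a_0(x,\xi)\to 0$ as $|x|\to\infty$, and performing the same splitting for the symbol of the remainder, the $a(\infty,\cdot)$-contribution is handled by the translation-invariant/Fourier-multiplier calculus combined with the Neumann-series step (which makes its part of the remainder $S^{-\infty}$ and multiplied by a compactly supported factor from $\chi$), while the $a_0$-contribution yields a symbol decaying both in $\xi$ (from the negative order) and in $x$ (from condition 2)). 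Such operators are known to be compact on $H^s_p(\Rn)$ by the arguments of \cite[Thm.\ 5.16]{KumanoGo} and \cite{Schrohe1992}, which I would adapt to the non-smooth setting by approximating $a_0$ by symbols with compact $x$-support, using that compactly supported symbols of negative order give compact operators on $H^s_p(\Rn)$ via Rellich. This yields left and right parametrices modulo compact operators and hence the Fredholm property. The delicate technical point, and the reason for the precise quantitative bounds on $s$, $\theta$ and $\tilde\varepsilon$, is tracking the regularity losses through the symbol-smoothing and the non-smooth composition so that the remainder indeed maps into a strictly smoother space.
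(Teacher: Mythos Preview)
Your approach is workable but differs from the paper's and has two points that need correction.

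The paper does \emph{not} smooth $a$ first. Instead it builds the parametrix directly in the non-smooth class: with $\psi\in C^\infty_b(\R)$ a cutoff, set $b(x,\xi):=\psi(R^{-2}(|x|^2+|\xi|^2))\,a(x,\xi)^{-1}$ and check (Lemma~\ref{lemma:HilfslemmaFuerBeweisDerFredholmproperty}) that $b\in C^{\tilde m,\tau}\tilde S^{0}_{\rho,\delta}$. All of the symbol-smoothing is hidden inside the non-smooth composition result, Theorem~\ref{thm:Eigenschaften1}, which gives $a(x,D_x)b(x,D_x)=\op(ab)+R_1$ and $b(x,D_x)a(x,D_x)=\op(ab)+R_2$ with $R_1,R_2$ compact. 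Since $ab-I=(\psi-1)I$ is supported in a bounded set of $(x,\xi)$, Marschall's compactness lemmas make $\op(ab-I)$ compact immediately. This is cleaner than your route because the pointwise identity $ba=I$ outside a compact set kills the principal remainder in one stroke; no Neumann iteration or splitting $a=a(\infty,\cdot)+a_0$ is needed.

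Your route---smooth $a$ to $a^\#$, invert $a^\#$ in the smooth calculus, treat $a^b$ as a compact perturbation---also works, but two steps in your sketch are not right as written. First, the $x$-decay of your smooth remainder $R_\#$ does \emph{not} come from the cutoff $\chi$; it comes from the fact that $a^\#$ is again slowly varying (Lemma~\ref{lemma:SymbolSmoothing2}\,ii)), so the lower-order terms $\partial_\xi^\gamma b_0\cdot D_x^\gamma a^\#$ in the composition lie in $\dot S$-classes. Second, you cannot apply the double-symbol reduction of Subsection~\ref{Subsection:SymbolReduction} to $b(x,D_x)a^b(x,D_x)$: that reduction requires unrestricted $x'$-differentiability of the double symbol, while $a^b$ is only $C^{\tilde m,\tau}$ in $x$. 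The fix is simpler than what you wrote: by Lemma~\ref{lemma:SymbolSmoothing1}\,iii) one has $a^b\in C^{\tilde m,\tau}\dot S^{m-\theta}_{\rho,\gamma}$, so $a^b(x,D_x)$ is already compact by Corollaries~\ref{kor:CompactPDO_H2}/\ref{kor:CompactPDO_Hp}, and composing with the bounded $b(x,D_x)$ preserves compactness---no symbol reduction is needed here.
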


This theorem will be proved in Section \ref{FredholmProperty}. For the definition of the symbol-class $C^{\tilde{m},\tau}\tilde{S}^{m}_{\rho,\delta}(\RnRn;M ; \mathcal{L}(\C^N))$ we refer to Definition \ref{Def:SlowlyVaryingSymbols} in Subsection \ref{Subsection:SymbolSmoothing}.

\section{Notations and Function Spaces}\label{section:Preliminaries}

The set of all natural numbers without $0$ is denoted by $\N$. Unless otherwise noted we consider $n \in \N$ during the whole paper. We define
$$\<{x}:=(1+|x|^2)^{1/2} \quad \text{ for each } x \in \Rn \qquad \text { and } \qquad  \dq \xi:= (2 \pi)^{-n} d \xi.$$
Moreover
$$ \<{x;y}:= (1+|x|^2+ |y|^2)^{1/2} \qquad \text{for all } x, y \in \Rn.$$
Additionally we set for each $x \in \R$
$$\lfloor x \rfloor:= \max\{ l \in \Z: l \leq x \} \qquad \text{ and } \qquad \lceil x \rceil := \min\{l \in \Z: l \geq x\}.$$
For each multi-index $\alpha=(\alpha_1, \ldots, \alpha_n) \in \Non$ we use the notations $ \p^{\alpha}_x := \p^{\alpha_1}_{x_1} \ldots \p^{\alpha_n}_{x_n} $ and $D^{\alpha}_x:= (-i)^{|\alpha|}\p^{\alpha}_x$. 

Assuming two Banach spaces $X,Y$ the set of all linear and bounded operators $A:X \rightarrow Y$ is denoted by $\mathscr{L}(X,Y)$. In case $X=Y$, we just write $\mathscr{L}(X)$.

For  $s \in (0,1)$ the set of all functions $f: \Rn \rightarrow \C$ fulfilling
\begin{align*}
  \|f\|_{C^{0,s}} \equiv \| f\|_{C^{0,s}(\Rn)} := \sup_{x \in \Rn}|f(x)| +\sup_{x \neq y} \frac{|f(x)-f(y)|}{|x-y|^s}< \infty
\end{align*}
is called \textit{Hölder space} $C^{0,s}(\Rn)$ of the order $0$ with Hölder continuity exponent $s$. A function $f: \Rn \rightarrow \C$  is in the Hölder space $C^{\tilde{m},s}(\Rn)$ of the order $\tilde{m} \in \N_0$, also denoted by $C^{\tilde{m}+s}(\Rn)$, if  we have $\p^{\alpha}_x f \in C^{0,s}(\Rn)$ for each $\alpha \in \Non$ with $|\alpha| \leq \tilde{m}$. Note that all Hölder spaces are Banach spaces. 

On account of the definition of the H\"older spaces and the Leibniz-rule we obtain:
%%%%%%%%%%%%%%%%%%%%%%%%
%% nachfolgendes Lemma wird benötigt für Kapitel 4(Beweis der Fredholm Property), siehe Lemma 1 vom Blatt 1 vom 20.02.2017
%%%%%%%%%%%%%%%%%%%%%%%
\begin{lemma}\label{lemma:PropertyHoelderSpaces}
  Let $\tilde{m} \in \N_0$, $0< \tau < 1$ and $f,g \in C^{\tilde{m},\tau}(\Rn)$. Then 
  \begin{align*}
    \|fg\|_{C^{\tilde{m},\tau}} \leq \sum_{\tilde{m}_1+\tilde{m}_2=\tilde{m}} C_{\tilde{m}}\left\{ \|f\|_{C^{\tilde{m}_1}_b} \|g\|_{C^{\tilde{m}_2, \tau}} + \|f\|_{C^{\tilde{m}_1, \tau}} \|g\|_{C^{\tilde{m}_2}_b} \right\}.
  \end{align*}
\end{lemma}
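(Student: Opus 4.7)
My plan is to reduce the multi-index statement to the elementary scalar product rule for the sup-norm and the Hölder seminorm of order $\tau$. First I would apply the Leibniz formula to expand
\[
\p_x^{\alpha}(fg) = \sum_{\alpha_1+\alpha_2=\alpha}\binom{\alpha}{\alpha_1}\, \p_x^{\alpha_1}f \cdot \p_x^{\alpha_2}g
\]
for every multi-index $\alpha \in \Non$ with $|\alpha|\leq \tilde m$. Since $\|h\|_{C^{\tilde m,\tau}}$ is, by definition, controlled by the maximum of $\|\p_x^\alpha h\|_{C^{0,\tau}}$ over $|\alpha|\leq \tilde m$, the claim reduces to estimating each product $\p_x^{\alpha_1}f\cdot \p_x^{\alpha_2}g$ in $C^{0,\tau}(\Rn)$.

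For this single-product step, set $F:=\p_x^{\alpha_1}f$ and $G:=\p_x^{\alpha_2}g$. The sup-norm of $FG$ is trivially controlled by $\|F\|_\infty\|G\|_\infty$. For the Hölder seminorm, the key identity
\[
F(x)G(x)-F(y)G(y) = F(x)\bigl(G(x)-G(y)\bigr) + G(y)\bigl(F(x)-F(y)\bigr)
\]
together with division by $|x-y|^\tau$ and taking suprema gives
\[
\|FG\|_{C^{0,\tau}}\leq \|F\|_{C^0_b}\,\|G\|_{C^{0,\tau}}+\|G\|_{C^0_b}\,\|F\|_{C^{0,\tau}}.
\]
Translating $F$ and $G$ back into derivatives of $f$ and $g$ produces a bound of the shape $\|f\|_{C^{|\alpha_1|}_b}\|g\|_{C^{|\alpha_2|,\tau}}+\|f\|_{C^{|\alpha_1|,\tau}}\|g\|_{C^{|\alpha_2|}_b}$ for each summand of the Leibniz expansion.

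Finally I would sum over $|\alpha|\leq \tilde m$ and over the Leibniz splittings $\alpha_1+\alpha_2=\alpha$. The pairs $(|\alpha_1|,|\alpha_2|)$ arising in this way merely satisfy $|\alpha_1|+|\alpha_2|\leq \tilde m$, whereas the right-hand side of the lemma is indexed by the equality $\tilde m_1+\tilde m_2=\tilde m$. I would handle this last discrepancy by monotonicity of the Hölder norms in the differentiability index, $\|\cdot\|_{C^{k}_b}\leq \|\cdot\|_{C^{k+j}_b}$ for $j\geq 0$, which lets me absorb lower-order contributions into summands with $\tilde m_1+\tilde m_2=\tilde m$ at the cost of a combinatorial constant $C_{\tilde m}$. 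I do not expect any real obstacle; the only care needed is bookkeeping with the binomial coefficients from the Leibniz expansion.
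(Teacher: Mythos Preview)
Your proposal is correct and matches the paper's own approach: the paper merely states that the lemma follows ``on account of the definition of the H\"older spaces and the Leibniz rule,'' and your argument spells out precisely that computation, including the product estimate in $C^{0,\tau}$ and the monotonicity step to pass from $|\alpha_1|+|\alpha_2|\leq \tilde m$ to $\tilde m_1+\tilde m_2=\tilde m$.
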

%vorerst ohne Beweis, da straitforward

% Using the definition of the Hölder spaces and $\|A\|_{C^{l \times l}} := \max_{i,j} |a_{ij}|$ for all $A=(a_{ij})_{i,j=1}^l \in \C^{l \times l}$ as norm of $\C^{l \times l}$ we obtain: 
% 
% \begin{bem}\label{bem:MatrixwertigerHölderRaum}
%   Let $0 <s<1$, $\tilde{m} \in \N_0$. Then we have
%   \begin{align*}
%     \|f\|_{C^{\tilde{m},s}(\Rn; \C^{l\times l})} \leq C \max_{i,j = 1}^l \| f_{ij}\|_{C^{\tilde{m},s}(\Rn)} \qquad \text{ for all } f=(f_{ij})_{i,j=1}^l \in C^{\tilde{m},s}(\Rn; \C^{l\times l}).
%   \end{align*}
% 
% \end{bem}

%%%%%%%%%%%%%%%%%%%%%%%%%%%%%%%
%% siehe Bemerkung 3 von Blatt 2 vom 14.03.2016
%%%%%%%%%%%%%%%%%%%%%%%%%%%%%%%

 The \textit{Bessel Potential space} $H^s_p(\Rn)$, $s \in \R$ and $1<p<\infty$, will play a central role in this paper. The set $H^s_p(\Rn)$ is defined by 
\begin{align}\label{BesselPotentialSpace} 
  H^s_p(\Rn):= \{ f \in \sd: \<{D_x}^s f \in L^p(\Rn) < \infty \},
\end{align}
where $\<{D_x}^s:=\op(\<{\xi}^s) $.\\

% Finally let us mention some notations concerning the symbol-classes: In case $l=1$ we write $C^{\tilde{m},s} S^m_{\rho,\delta}(\Rn \times \Rn)$ and $S^m_{\rho,\delta}(\Rn \times \Rn)$ instead of $C^{\tilde{m},s} S^m_{\rho,\delta}(\Rn \times \Rn, \C^{l \times l})$ and $S^m_{\rho,\delta}(\Rn \times \Rn, \C^{l \times l})$. Additionally $C^{\tilde{m},s} S^m_{\rho,\delta}(\Rn \times \Rn, \C^{l \times l})$ can be considered as a Fr\'{e}chet space with respect to the semi-norms
% \begin{align*}
%   |a|^{(m)}_{k, C^{\tilde{m},s} S^m_{\rho, \delta}(\Rn \times \Rn, \C^{l \times l})}:= \max_{|\alpha| \leq k} \sup_{\xi \in \Rn} \left\{ \|\pa{\alpha} a(.,\xi) \|_{C^{\tilde{m},s}(\Rn; \C^{l \times l})} \<{\xi}^{-m+ \rho|\alpha| - \delta (\tilde{m}+s)} + Se_{\alpha}(\xi) \right\},
% \end{align*}
% for all $k \in \N_0$ and $a \in C^{\tilde{m},s} S^m_{\rho, \delta}(\Rn \times \Rn, \C^{l \times l})$, 
% where $Se(\xi)=0$ if $\delta =0$ and 
%   $Se_{\alpha}(\xi) = \|\pa{\alpha} a(.,\xi) \|_{C^0_b(\Rn; \C^{l \times l})} \<{\xi}^{-m+ \rho|\alpha|}$
% else.

% In case $l=1$ we write $|.|^{(m)}_{k, C^{\tilde{m},s} S^m_{\rho, \delta}}$ instead of $|.|^{(m)}_{k, C^{\tilde{m},s} S^m_{\rho, \delta}(\Rn \times \Rn, \C^{l \times l})}$.

%%%%%%%%%%%%%%%%%%%%%%%%%%%%%%%%%%%%%
%% nachfolgender Abschnitt wird für die Fredholmeigenschaft benötigt
%%%%%%%%%%%%%%%%%%%%%%%%%%%%%%%%%%%%

Finally let us mention for the convenience of the reader an interpolation result needed in this paper, see e.g. \cite[Lemma 2.41]{Diss}:

\begin{lemma}\label{lemma:InterpolationResult}
  Let $k, m \in \N$ with $k \leq m$, $0<\tau < 1$ and $\theta:= \frac{k}{m + \tau}$. Then 
  \begin{align*}
    \|f\|_{C^k_b(\Rn)} \leq C\|f\|^{1-\theta}_{C^0_b(\Rn)} \|f\|^{\theta}_{C^{m, \tau}(\Rn)} \qquad \text{for all } f \in C^{m, \tau}(\Rn).
  \end{align*}

\end{lemma}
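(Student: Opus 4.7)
The plan is to establish the pointwise estimate
\begin{align*}
  \|\p^\alpha f\|_{C^0_b(\Rn)}\ \le\ C\,\|f\|_{C^0_b(\Rn)}^{\,1-|\alpha|/(m+\tau)}\,\|f\|_{C^{m,\tau}(\Rn)}^{\,|\alpha|/(m+\tau)}
\end{align*}
separately for every multi-index $\alpha$ with $1\le|\alpha|\le k$, and then to sum over $|\alpha|\le k$. Using the trivial bound $\|f\|_{C^0_b}\le\|f\|_{C^{m,\tau}}$, each lower-order contribution ($|\alpha|<k$) is dominated by the top-order one up to a constant depending on $k,m,\tau$, so the lemma reduces to controlling a single derivative by a geometric interpolation of the two extreme norms.

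For the base case $|\alpha|=1$ I would use a Vandermonde--Taylor argument. Fix $j\in\{1,\ldots,n\}$ and $t_0>0$. Since $\p_{j}^{m}f\in C^{0,\tau}$, Taylor's formula with integral remainder gives, for $l=0,1,\ldots,m$,
\begin{align*}
  f(x+lt_0 e_j)=\sum_{r=0}^{m}\frac{(lt_0)^r}{r!}\,\p_j^r f(x)+R_l(x,t_0),\qquad |R_l(x,t_0)|\le C(lt_0)^{m+\tau}\|f\|_{C^{m,\tau}}.
\end{align*}
The matrix $V_{rl}=l^r$ ($r,l=0,\ldots,m$) is a classical Vandermonde matrix and therefore invertible, so one finds constants $d_0,\ldots,d_m$ depending only on $m$ with $\sum_l d_l\,l^r=\delta_{r,1}$. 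Setting $c_l:=d_l/t_0$ and taking the corresponding linear combination produces the identity
\begin{align*}
  \p_j f(x)=\sum_{l=0}^{m}c_l\,f(x+lt_0 e_j)-\sum_{l=0}^{m}c_l R_l(x,t_0),
\end{align*}
hence the two-term bound $|\p_j f(x)|\le C t_0^{-1}\|f\|_{C^0_b}+C t_0^{\,m+\tau-1}\|f\|_{C^{m,\tau}}$. Optimizing in $t_0$ (the two terms balance at $t_0=(\|f\|_{C^0_b}/\|f\|_{C^{m,\tau}})^{1/(m+\tau)}$) reproduces the base-case inequality. The degenerate case $\|f\|_{C^{m,\tau}}=0$ forces $f$ to be a bounded polynomial, hence constant, and the inequality is then trivial.

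For general $|\alpha|$ I would proceed by induction on $|\alpha|$. Assuming the estimate at level $|\alpha|$, apply the base case to $g:=\p^\alpha f\in C^{m-|\alpha|,\tau}(\Rn)$ with $m-|\alpha|$ in place of $m$; this bounds $\|\p^{\alpha+e_j}f\|_{C^0_b}=\|\p_j g\|_{C^0_b}$ by an interpolate of $\|g\|_{C^0_b}$ and $\|g\|_{C^{m-|\alpha|,\tau}}\le\|f\|_{C^{m,\tau}}$. Plugging in the inductive estimate for $\|g\|_{C^0_b}$ and simplifying, the two layers of exponents telescope to $1-(|\alpha|+1)/(m+\tau)$ and $(|\alpha|+1)/(m+\tau)$, as required (a short but necessary algebraic verification). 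The main obstacle is precisely this bookkeeping: checking invertibility of the Vandermonde system and confirming that the nested interpolation exponents collapse to the asserted ones. Neither step is deep, but the second must be done explicitly -- alternatively, one can bypass it by appealing to the standard real-interpolation identity $(C^0_b,C^{m,\tau})_{\theta,\infty}=C^{\theta(m+\tau)}_*$ for the required $\theta$.
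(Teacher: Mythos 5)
Your argument is correct and complete: the Taylor expansion with H\"older remainder, the invertibility of the Vandermonde system with nodes $0,1,\dots,m$, the optimization in $t_0$ (with the degenerate case $\|f\|_{C^{m,\tau}}=0$ disposed of separately), and the telescoping of the exponents $(1-a)(1-b)=1-(|\alpha|+1)/(m+\tau)$ all check out, as does the observation that the lower-order terms are absorbed because $\|f\|_{C^0_b}\le\|f\|_{C^{m,\tau}}$ makes the interpolate monotone in the exponent. The paper itself gives no proof of this lemma --- it only cites Lemma 2.41 of the thesis \cite{Diss} --- so there is nothing internal to compare against; your finite-difference/Vandermonde argument is the classical proof of this Landau--Kolmogorov-type interpolation inequality and is exactly the kind of argument the cited source uses.
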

%%%%%%%%%%%%%%%%%%%%%%%%%%%%%%%%%%%%
%% siehe zum Beispiel Lemma 2.14 von meiner Diss 
%%%%%%%%%%%%%%%%%%%%%%%%%%%%%%%%%%%

\subsection{Space of Amplitudes and Oscillatory Integrals}\label{subsection:SpaceOfAmplitudes}

In the present paper we need \textbf{oscillatory integrals} defined by
\begin{align} \label{DefOsziInt}
  \osint e^{-iy \cdot \eta} a(y,\eta) dy \dq \eta := \lim_{\e \rightarrow 0} \iint \chi(\e y, \e \eta) e^{-iy \cdot \eta} a(y,\eta) dy \dq \eta
\end{align}
for all elements $a$ of the \textit{space of amplitudes} $\mathscr{A}^{m,N}_{\tau,M}(\RnRn)$, $ N,M \in \N_0 \cup \{ \infty \}$, $m,\tau \in \R$, where $\chi \in \mathcal{S}(\RnRn)$ with $\chi(0,0)=1$. Here $\mathscr{A}^{m,N}_{\tau,M}(\RnRn)$, $ N,M \in \N_0 \cup \{ \infty \}$, $m,\tau \in \R$ is the set of all continuous functions $a:\Rn \times \Rn \rightarrow \C$ such that for all
$\alpha, \beta \in \Non$ with $|\alpha| \leq N$, $|\beta| \leq M$ we have
  \begin{enumerate}
    \item[i)] $\p^{\alpha}_{\eta} \p^{\beta}_{y} a(y,\eta) \in C^0(\RnRnx{y}{\eta})$,
    \item[ii)] $\left|\p^{\alpha}_{\eta} \p^{\beta}_{y} a(y, \eta) \right| \leq C_{\alpha, \beta} (1 + |\eta|)^m (1 + |y|)^{\tau}$ for all $y, \eta \in \Rn$.
  \end{enumerate} 

% If $N=M=\infty$ we also write $\mathscr{A}^{m}_{\tau}(\RnRn)$ instead of $\mathscr{A}^{m,\infty}_{\tau}(\RnRn)$.\\

Defining for all $m \in \N$
\begin{align}\label{eqDefAg}
  A^m(D_{x},\xi) &:= \<{\xi}^{-m} \<{D_x}^{m} \ &\text{ if } m \text{ is even},\\ \label{eqDefAu}
  A^m(D_{x},\xi) &:= \<{\xi}^{-m-1} \<{D_x}^{m-1} -\sum_{j=1}^n \<{\xi}^{-m} \frac{\xi_j}{\<{\xi} } \<{D_x}^{m-1} D_{x_j} \ &\text{ else},
\end{align}
we can extend some properties of the oscillatory integral proved in Section 2.3 of \cite{Paper1} as follows:

\begin{thm}\label{thm:propertiesOsciInt}
  Let $m, \tau \in \R$ and $N,M \in \N_0 \cup \{ \infty \}$ with $N>n + \tau$. Moreover let $l, l' \in \N$ with $N \geq l'> n+\tau$ and $M \geq l > n+m$. 
%   Additionally let $a \in C^0(\RnRn)$ such that for all $\alpha, \beta \in \Non$ with $|\alpha| \leq N$, $|\beta| \leq l$ the partial derivatives $\p^{\alpha}_{\eta} \p^{\beta}_y a$ exist and  $\p^{\alpha}_{\eta} \p^{\beta}_y a \in C^0(\RnRn)$ fulfilling 
%   \begin{align}\label{lemma:eq10}
%     | \p^{\alpha}_{\eta} \p^{\beta}_y a(x,\xi)| \leq C_{\alpha, \beta}\<{\eta}^m \<{y}^{\tau} \qquad \text{for all } \eta, y \in \Rn.
%   \end{align}
  Then the oscillatory integral (\ref{DefOsziInt}) exists for all $a \in \mathscr{A}^{m,N}_{\tau,M}(\RnRn)$ and we have for all $l_1,l_2 \in \N_0$ with $l_1\leq N$ and $l_2 \leq l$:
  \begin{align*}
    \osint e^{-iy\cdot \eta} a(y,\eta) dy \dq \eta &= \iint e^{-iy\cdot \eta} A^{l'}(D_{\eta}, y) A^l(D_y, \eta) a(y,\eta) dy \dq \eta \\
     \osint e^{-iy\cdot \eta} a(y,\eta) dy \dq \eta &=  \osint e^{-iy\cdot \eta}A^{l_1}(D_{\eta}, y) A^{l_2} (D_y, \eta) a(y,\eta) dy \dq \eta
  \end{align*}
\end{thm}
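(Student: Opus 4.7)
The plan is to reduce the statement to a standard integration-by-parts trick using the operators $A^l$ and $A^{l'}$, together with a dominated-convergence argument for the regularization $\chi(\e y,\e\eta)$. The key algebraic fact behind the whole construction is the pointwise identity $A^l(D_y,\eta)e^{-iy\cdot\eta}=e^{-iy\cdot\eta}$, and analogously for $A^{l'}(D_\eta,y)$. For even $l$ this is immediate from $\<{D_y}^l e^{-iy\cdot\eta}=\<{\eta}^l e^{-iy\cdot\eta}$; for odd $l$ the correction term in (\ref{eqDefAu}) is designed precisely so that $\<{\eta}^{-l-1}\<{\eta}^{l-1}+\<{\eta}^{-l-2}|\eta|^2\<{\eta}^{l-1}=1$. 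I would check this identity first, since it motivates the whole definition of $A^l$.

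Next, fixing $a\in \mathscr{A}^{m,N}_{\tau,M}(\RnRn)$ and a cutoff $\chi\in\snn$ with $\chi(0,0)=1$, set $a_\e(y,\eta):=\chi(\e y,\e\eta)a(y,\eta)\in\snn$. For such a Schwartz amplitude honest integration by parts in $y$ and $\eta$ gives
$$\iint e^{-iy\cdot \eta}a_\e(y,\eta)\,dy\,\dq\eta=\iint e^{-iy\cdot\eta}\,A^{l'}(D_\eta,y)\,A^l(D_y,\eta)\,a_\e(y,\eta)\,dy\,\dq\eta,$$
because $A^{l'}(D_\eta,y)A^l(D_y,\eta)$ is the formal transpose of the operator that leaves $e^{-iy\cdot\eta}$ invariant, and $a_\e$ is rapidly decreasing. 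Expanding by Leibniz, the single term in which no derivative hits $\chi$ is $\chi(\e y,\e\eta)\cdot A^{l'}(D_\eta,y)A^l(D_y,\eta)a(y,\eta)$, while every remaining term picks up at least one factor of $\e$ from a derivative of $\chi$.

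The core estimate to prove is then
$$\bigl|A^{l'}(D_\eta,y)A^l(D_y,\eta)a(y,\eta)\bigr|\le C\,(1+|y|)^{\tau-l'}(1+|\eta|)^{m-l},$$
which follows from the defining bounds of $\mathscr{A}^{m,N}_{\tau,M}$, the hypotheses $N\ge l'$, $M\ge l$, and the negative powers of $\<{y}$ and $\<{\eta}$ supplied by the structure of $A^{l'}$ and $A^l$. The hypotheses $l'>n+\tau$ and $l>n+m$ are exactly what one needs to make the right-hand side integrable on $\RnRn$. With this, the principal term converges by dominated convergence to $\iint e^{-iy\cdot\eta}A^{l'}(D_\eta,y)A^l(D_y,\eta)a(y,\eta)\,dy\,\dq\eta$ as $\e\to 0$, and the error terms vanish: each of them carries an $\e$-factor while remaining uniformly dominated by the same integrable majorant, since derivatives of $\chi$ are bounded and the $A$-operators acting on $\chi(\e\cdot,\e\cdot)$ produce at worst the same negative powers of $\<{y}$ and $\<{\eta}$. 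This establishes both existence of the oscillatory integral and the first identity.

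For the second identity I would observe that $\tilde a:=A^{l_1}(D_\eta,y)A^{l_2}(D_y,\eta)a$ lies in $\mathscr{A}^{m-l_2,N-l_1}_{\tau-l_1,M-l_2}$, so the first part already endows it with an oscillatory integral; a further integration by parts with $A^{l'-l_1}(D_\eta,y)A^{l-l_2}(D_y,\eta)$ reduces both sides to the same absolutely convergent integral treated above. The main obstacle, and where I expect to spend most of the work, is the bookkeeping in the Leibniz expansion: one has to check that every one of the many error terms gains a net positive $\e$-power while remaining bounded by an $\e$-independent $L^1$-majorant. This is essentially the content of the corresponding result in \cite{Paper1}, and the extension to the present weights $(m,\tau)$ with $N,M$ possibly finite should be routine once the estimate displayed above is in place.
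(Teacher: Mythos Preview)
Your proposal is correct and follows precisely the standard route: integration by parts via the identity $A^l(D_y,\eta)e^{-iy\cdot\eta}=e^{-iy\cdot\eta}$, uniform $L^1$-majorization of $A^{l'}A^l a_\e$, and dominated convergence with the Leibniz error terms carrying positive powers of $\e$. This is exactly the argument of Theorems~2.10 and~2.12 in \cite{Paper1} that the paper invokes; the only adaptation the paper flags is that the amplitude bound~ii) is available only for $|\beta|\le M$, which your choice $l\le M$ already respects.
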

%%%%%%%%%%%%%%%%%%%%%%%%%%%%%%%%%%%%
%% siehe Theorem vom Blatt 3 vom 14.03.2017
%%%%%%%%%%%%%%%%%%%%%%%%%%%%%%%%%%%%
\begin{proof}
  The claim can be verified in the same way as in Theorem 2.10 and Theorem 2.12 of \cite{Paper1}, if one takes care of ii) just holding for $|\beta| \leq l$.
\end{proof}

\begin{thm}
  Let $m, \tau \in \R$, $m_i, \tau_i \in \R$ for $i\in \{1,2\}$ and $N \in \N_0 \cup \{ \infty \}$ such that there is a $l' \in \N$ with  $N\geq l'>n + \tau$. Moreover let $\alpha, \beta \in\Non$ with $|\alpha| \leq \tilde{M}$, where $\tilde{M}:= \max \{ \hat{m} \in \N_0: N-\hat{m} >n+\tau \}$ and $l \in \N$ with $l>m+n$. Considering $a \in C^0(\Rn_y \times \Rn_{y'} \times \Rn_{\eta} \times \Rn_{\xi})$ with 
  \begin{itemize}
    \item $\left| A^{l'}(D_{\eta}, y) A^l(D_y, \eta) a(y, y', \eta, \xi) \right| \leq C_{l, l'} \<{y}^{\tau-l'} \<{\eta}^{m-l} \<{y'}^{\tau_1} \<{\xi}^{m_1}$
    \item $\left| A^{l'}(D_{\eta}, y) A^l(D_y, \eta) \pa{\alpha} \p_{y'}^{\beta} a(y, y', \eta, \xi) \right| \leq C_{l, l', \alpha, \beta} \<{y}^{\tau-l'} \<{\eta}^{m-l} \<{y'}^{\tau_2} \<{\xi}^{m_2}$ 
  \end{itemize}
  for all $y,y', \eta, \xi \in \Rn$ we have for all  $y', \xi \in \Rn$:
  \begin{align*}
     \pa{\alpha} \p_{y'}^{\beta} \osint e^{-iy \cdot \eta} a(y, y', \eta, \xi) dy \dq \eta 
      = \osint e^{-iy \cdot \eta}  \pa{\alpha} \p_{y'}^{\beta} a(y, y', \eta, \xi) dy \dq \eta.
  \end{align*}

\end{thm}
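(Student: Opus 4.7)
The strategy is to reduce the statement to ordinary differentiation under a Lebesgue integral sign via the representation formula of the previous theorem, and then to convert the resulting Lebesgue integral back to an oscillatory integral. For fixed $y', \xi \in \Rn$, Theorem \ref{thm:propertiesOsciInt} applied to the map $(y, \eta) \mapsto a(y, y', \eta, \xi)$ yields
\begin{align*}
  \osint e^{-iy \cdot \eta} a(y, y', \eta, \xi) dy \dq \eta = \iint e^{-iy \cdot \eta} A^{l'}(D_{\eta}, y) A^l(D_y, \eta) a(y, y', \eta, \xi) dy \dq \eta,
\end{align*}
where the right-hand side is absolutely convergent: by the first bound in the hypothesis, the integrand is dominated pointwise by $C\<{y}^{\tau - l'}\<{\eta}^{m-l}\<{y'}^{\tau_1}\<{\xi}^{m_1}$, and $l' > n+\tau$, $l > n+m$ make the $(y,\eta)$-factor integrable over $\RnRn$.

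Next, I would observe that the operators $A^{l'}(D_{\eta}, y)$ and $A^l(D_y, \eta)$ involve only differentiations in $\eta$ and $y$ together with multiplications by functions of $y$ alone and of $\eta$ alone; they do not act on the parameters $y'$ and $\xi$. Since the phase $e^{-iy\cdot \eta}$ is also independent of $(y', \xi)$, the operators $A^{l'}(D_{\eta}, y)A^l(D_y, \eta)$ commute with $\pa{\alpha}$ and $\p_{y'}^{\beta}$, and we have the pointwise identity
\begin{align*}
  \pa{\alpha}\p_{y'}^{\beta}\bigl(e^{-iy\cdot \eta} A^{l'}(D_{\eta}, y) A^l(D_y, \eta) a\bigr) = e^{-iy\cdot \eta} A^{l'}(D_{\eta}, y) A^l(D_y, \eta) \pa{\alpha}\p_{y'}^{\beta} a.
\end{align*}

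The third step is to apply the standard theorem on differentiation under a Lebesgue integral sign, by induction on $|\alpha|+|\beta|$, taking one derivative at a time. For the $\pa{\alpha}\p_{y'}^{\beta}$-differentiated integrand, the second hypothesis provides a dominating function $C\<{y}^{\tau-l'}\<{\eta}^{m-l}\<{y'}^{\tau_2}\<{\xi}^{m_2}$. For $(y', \xi)$ restricted to a compact neighborhood of any fixed $(y'_0, \xi_0)$, the factors $\<{y'}^{\tau_2}$ and $\<{\xi}^{m_2}$ are uniformly bounded, so difference quotients converge dominatedly and the derivative passes inside. Once all derivatives are inside, I would invoke Theorem \ref{thm:propertiesOsciInt} a second time, now applied to $\pa{\alpha}\p_{y'}^{\beta} a$, to rewrite the absolutely convergent Lebesgue integral as the oscillatory integral on the right-hand side of the claim.

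The main technical point to verify is that Theorem \ref{thm:propertiesOsciInt} actually applies to $\pa{\alpha}\p_{y'}^{\beta} a$ in the last step. The cutoff $|\alpha| \leq \tilde{M}$ is tailored for exactly this: since each $\xi$-derivative of the original $a$ consumes one of the $N$ available $\eta$-derivatives in the associated amplitude bound, the condition $N - |\alpha| > n + \tau$ is what permits the choice of some $l' \in \N$ satisfying $n + \tau < l' \leq N - |\alpha|$, so the representation formula remains available. The $y'$-derivative $\p_{y'}^{\beta}$ requires no analogous constraint because $y'$ is merely a parameter. No real obstacle arises beyond this bookkeeping; the heart of the argument is simply the commutation of $A^{l'}(D_\eta, y) A^l(D_y, \eta)$ with the parameter derivatives combined with dominated convergence.
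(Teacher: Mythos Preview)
Your approach is correct and is precisely the standard argument the paper has in mind: the paper's proof is just the one-line reference ``This result can be verified similarly to \cite[Theorem 2.11]{Paper1}'', and that reference carries out exactly the scheme you describe --- pass to the absolutely convergent representation via Theorem~\ref{thm:propertiesOsciInt}, differentiate under the integral by dominated convergence using the second hypothesis, and convert back. One small inaccuracy in your last paragraph: in this theorem $\xi$ and $\eta$ are independent variables, so a $\xi$-derivative does not literally ``consume'' an $\eta$-derivative; the hypotheses simply hand you the needed bound on $A^{l'}(D_\eta,y)A^l(D_y,\eta)\pa{\alpha}\p_{y'}^{\beta}a$ directly, and the constraint $|\alpha|\le\tilde M$ is there so that the statement matches the setting of \cite[Theorem 2.11]{Paper1} and the applications later (where $\xi$ and $\eta$ do become coupled).
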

%%%%%%%%%%%%%%%%%%%%%%%%%%%%%%%%%%%%
%% siehe Theorem vom Blatt 1 vom 14.03. 2017
%%%%%%%%%%%%%%%%%%%%%%%%%%%%%%%%%%%%%%
\begin{proof}
  This result can be verified similarly to \cite[Theorem 2.11]{Paper1}.
\end{proof}

\begin{kor}
  Let $m, \tau \in \R$ and $N \in \N_0 \cup \{ \infty \}$ such that there is some $l' \in \N$ with  $N\geq l'>n + \tau$. Moreover let $l \in \N$ with $l>n+m$. Additionally let $a_j, a \in C^0(\RnRn)$, $j \in \N_0$ such that for all $\alpha, \beta \in\Non$ with $|\alpha| \leq N$ and $|\beta| \leq l$ the derivatives $\p_{\eta}^{\alpha} \p_y^{\beta} a_j, \p_{\eta}^{\alpha} \p_y^{\beta} a$ exist in the classical sense and 
  \begin{itemize}
    \item $|\p_{\eta}^{\alpha} \p_y^{\beta} a_j(y, \eta)| \leq C_{\alpha, \beta} \<{\eta}^{m} \<{y}^{\tau}$ for all $\eta, y \in \Rn$, $j \in \N_0$,
    \item $|\p_{\eta}^{\alpha} \p_y^{\beta} a| \leq C_{\alpha, \beta} \<{\eta}^{m} \<{y}^{\tau}$ for all $\eta, y \in \Rn$,
    \item $\p_{\eta}^{\alpha} \p_y^{\beta} a_j(y, \eta) \xrightarrow{j \rightarrow \infty } \p_{\eta}^{\alpha} \p_y^{\beta} a(y, \eta)$ for all $\eta, y \in \Rn$.
  \end{itemize}
  Then 
  \begin{align*}
    \lim_{j \rightarrow \infty} \osint e^{-iy \cdot \eta}  a_j(y, \eta) dy \dq \eta 
    =  \osint e^{-iy \cdot \eta}  a(y, \eta) dy \dq \eta .
  \end{align*}
\end{kor}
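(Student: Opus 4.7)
The plan is to reduce the oscillatory integrals to ordinary absolutely convergent Lebesgue integrals via the regularizing identity supplied by Theorem \ref{thm:propertiesOsciInt}, and then to pass to the limit $j \to \infty$ by the dominated convergence theorem. The hypotheses have been crafted so that exactly the bounds and regularity needed for this reduction are available uniformly in $j$.

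First I would fix some $l' \in \N$ with $N \geq l' > n + \tau$, which exists by assumption. Since $a_j$ and $a$ each satisfy the derivative bounds $|\p_\eta^\alpha \p_y^\beta a_j(y,\eta)|,|\p_\eta^\alpha \p_y^\beta a(y,\eta)| \leq C_{\alpha,\beta}\<{\eta}^m\<{y}^\tau$ for $|\alpha|\leq l'$ and $|\beta|\leq l$, both $a_j$ and $a$ lie in the space of amplitudes $\mathscr{A}^{m,l'}_{\tau,l}(\RnRn)$. Hence Theorem \ref{thm:propertiesOsciInt} yields
\begin{align*}
  \osint e^{-iy\cdot\eta} a_j(y,\eta)\, dy\, \dq\eta &= \iint e^{-iy\cdot\eta} A^{l'}(D_\eta,y)A^l(D_y,\eta) a_j(y,\eta)\, dy\, \dq\eta,\\
  \osint e^{-iy\cdot\eta} a(y,\eta)\, dy\, \dq\eta &= \iint e^{-iy\cdot\eta} A^{l'}(D_\eta,y)A^l(D_y,\eta) a(y,\eta)\, dy\, \dq\eta,
\end{align*}
where both right-hand sides are ordinary, absolutely convergent integrals.

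Next I would unwind the definitions (\ref{eqDefAg})--(\ref{eqDefAu}) to note that $A^{l'}(D_\eta,y)A^l(D_y,\eta)$ is a finite linear combination of operators of the form $c_{\alpha,\beta}(y,\eta)\,\p_\eta^\alpha \p_y^\beta$ with $|\alpha|\leq l'$, $|\beta|\leq l$, whose coefficients satisfy $|c_{\alpha,\beta}(y,\eta)| \leq C\<{y}^{-l'}\<{\eta}^{-l}$. Combining this with the hypothesized uniform bounds gives
\begin{align*}
  \left| A^{l'}(D_\eta,y)A^l(D_y,\eta) a_j(y,\eta)\right| \leq C \<{y}^{\tau-l'}\<{\eta}^{m-l}
\end{align*}
for all $j$ and similarly for $a$. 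Since $l' > n+\tau$ and $l > n+m$, the right-hand side is integrable on $\RnRn$, so the function $C\<{y}^{\tau-l'}\<{\eta}^{m-l}$ serves as a common Lebesgue-integrable dominant.

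Finally, the assumed pointwise convergence $\p_\eta^\alpha \p_y^\beta a_j \to \p_\eta^\alpha \p_y^\beta a$ together with the linearity of $A^{l'}(D_\eta,y)A^l(D_y,\eta)$ implies the pointwise convergence
\begin{align*}
  e^{-iy\cdot\eta} A^{l'}(D_\eta,y)A^l(D_y,\eta) a_j(y,\eta) \xrightarrow{j\to\infty} e^{-iy\cdot\eta} A^{l'}(D_\eta,y)A^l(D_y,\eta) a(y,\eta)
\end{align*}
on $\RnRn$. Dominated convergence then yields the claimed limit relation. There is no real obstacle here: the work has already been done in Theorem \ref{thm:propertiesOsciInt}, and the corollary amounts to the observation that once the oscillatory integral is rewritten as an absolutely convergent one with a uniform dominant, the usual interchange of limit and integral applies.
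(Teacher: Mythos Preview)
Your proof is correct and follows essentially the same approach the paper intends: the paper's own proof merely says ``The claim can be shown similarly to \cite[Corollary 2.13]{Paper1}'', and your argument---rewriting both oscillatory integrals as absolutely convergent ones via Theorem~\ref{thm:propertiesOsciInt}, extracting a uniform $L^1$ dominant $C\<{y}^{\tau-l'}\<{\eta}^{m-l}$, and applying dominated convergence---is precisely that standard technique spelled out in detail.
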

%%%%%%%%%%%%%%%%%%%%%%%%%%%%%%%%%%%%%%%%%%%
%% siehe Corollary auf Blatt 2 vom 14.03. 2017
%%%%%%%%%%%%%%%%%%%%%%%%%%%%%%%%%%%%%%%%%%%%%
\begin{proof}
  The claim can be shown similarly to \cite[Corollary 2.13]{Paper1}.
\end{proof}

Another property of oscillatory integral needed later on is

\begin{bem}\label{bem:PropOsciIntBasic}
  Assuming $u \in C^{\infty}_b(\Rn)$ and $x \in \Rn$ we obtain
  \begin{align*}
    \osint e^{i(x-y)\cdot \eta} u(y)dy \dq \eta 
    = u(x).
  \end{align*}
\end{bem}
For the proof see e.g. \cite[Example 3.11]{PDO}.

\section{Pseudodifferential Operators and their Properties}\label{section:PDO}

Throughout this section we summarize all properties of pseudodifferential operators needed later on. Additionally we define all symbol-classes of pseudodifferential operators needed in this paper.

As shown in \cite[Remark 4.2]{Diss} we have
 \begin{align}\label{glattesSymbolIstNichtglatt}
      S^{m}_{\rho, \delta}(\Rn \times \R^n; M) \subseteq C^{\tilde{m},s} S^{m}_{\rho, \delta}(\Rn \times \R^n;M).
  \end{align}
for all $0<s \leq 1$, $\tilde{m} \in \N_0$,  $m\in \R$, $M \in \N_0 \cup \{ \infty \}$ and $0 \leq \rho,\delta \leq 1$.

Additionally we get by means of interpolation, c.f. Lemma \ref{lemma:InterpolationResult}, the next estimate for non-smooth symbols:

\begin{bem}\label{bem:AbschatzungNichtglattesSymbol}
  Let $\tilde{m} \in \N_0$, $0<\tau < 1$, $0 \leq \delta, \rho \leq 1$, $m \in \R$ and $a \in C^{\tilde{m}, \tau} S^m_{\rho, \delta}(\RnRn; M)$. 
  Then we get for all $\alpha \in \Non$ with $|\alpha| \leq M$ and $k \in \N_0$ with $k \leq \tilde{m}$:
  \begin{align*}
    \|\pa{\alpha} a(., \xi)\|_{C^k_b(\Rn)} \leq C_{\alpha, \beta} \<{\xi}^{m-\rho|\alpha| + \delta k} \qquad \text{for all } \xi \in \Rn.
  \end{align*}
\end{bem}
%%%%%%%%%%%%%%%%%%%%%%%%%%%%%%%%%%%%%%%
%% Lemma 2 vom Blatt 2 vom 20.02.2017
%%%%%%%%%%%%%%%%%%%%%%%%%%%%%%%%%%%%%%%%%

Pseudodifferential operators are bounded as maps between several Bessel Potential spaces. For the proof we refer to \cite[Theorem 3.7]{Paper1}.  

\begin{thm}\label{thm:BoundednessResultNonSmooth}
  Let $m \in \R$, $0 \leq \delta \leq \rho \leq 1$ with $\rho >0$, $1<p < \infty$ and $M \in \N_0 \cup \{\infty\}$ with $M>max \{\frac{n}{2}, \frac{n}{p} \}$. Additionally let $\tau > \frac{1-\rho}{1-\delta} \cdot \frac{n}{2}$ if $\rho <1$ and $\tau >0$ if $\rho =1$ respectively. Moreover let $\mathscr{B} \subseteq C^{\tau} S^{m-k_p}_{\rho, \delta} (\RnRn; M)$ be bounded. Denoting $k_p:=(1-\rho) n \left| 1/2 - 1/p \right|$ and let $(1-\rho)n/p - (1-\delta) \tau < s< \tau$ there is some $C_{s}>0$, independent of $a \in \mathscr{B}$,  such that 
  \begin{align*}
    \left\| a(x,D_x) f \right\|_{H^s_p(\Rn)} \leq C_{s} \| f\|_{H_p^{s+m}(\Rn)} \qquad \text{for all } a \in \mathscr{B} \text{ and } f \in H_p^{s+m}(\Rn).
  \end{align*}
\end{thm}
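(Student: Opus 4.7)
I would prove this via the standard symbol-smoothing technique adapted to the non-smooth setting. The plan is to decompose $a = a^{\#} + a^b$ where $a^{\#}$ lies in a classical smooth Hörmander class $S^{m-k_p}_{\rho,\delta_1}(\RnRn;M)$ with $\delta < \delta_1 < \rho$, and the rough remainder $a^b$ belongs to $C^{\tau}S^{m-k_p-(1-\delta_1)\tau}_{\rho,\delta_1}(\RnRn;M)$, i.e., its order has been reduced by $(1-\delta_1)\tau$ at the price of keeping only Hölder regularity in $x$. Concretely, fix a Littlewood-Paley partition $1 = \sum_{k\geq 0}\varphi_k(\xi)$ with $\varphi_k$ supported in $\{|\xi|\sim 2^k\}$ and let $J_\e$ denote convolution in $x$ with a Friedrichs mollifier at scale $\e$. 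Setting
\begin{align*}
a^{\#}(x,\xi) := \sum_{k\geq 0}(J_{2^{-k(\delta_1-\delta)}}a(\cdot,\xi))(x)\varphi_k(\xi), \qquad a^b := a - a^{\#},
\end{align*}
the Hölder regularity in $x$ translates into the claimed bounds, and seminorms of $a^{\#},a^b$ are controlled uniformly over $a \in \mathscr{B}$ since the decomposition is linear.

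For the smooth piece $a^{\#}$, since $\delta_1 < \rho$, I would invoke the classical Fefferman-Stein / Hörmander $L^p$-boundedness theorem for symbols in $S^{-k_p}_{\rho,\delta_1}$, which yields $L^p$-boundedness of the reduced operator $\langle D_x\rangle^{s}a^{\#}(x,D_x)\langle D_x\rangle^{-s-m}$. The hypothesis $M > \max\{n/2,n/p\}$ guarantees sufficient $\xi$-smoothness for that theorem to apply, and the composition is legitimate because $a^{\#}$ lives in the classical smooth calculus. This gives $\|a^{\#}(x,D_x)f\|_{H^s_p}\leq C\|f\|_{H^{s+m}_p}$ uniformly on $\mathscr{B}$ for every $s\in\R$.

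For the rough remainder $a^b\in C^{\tau}S^{m-k_p-(1-\delta_1)\tau}_{\rho,\delta_1}$, the claim is that an operator with Hölder-$\tau$ symbol of sufficiently negative order is bounded $H^{s+m}_p\to H^s_p$ for $s\in(0,\tau)$. This is established via the Littlewood-Paley characterization of $H^s_p$: writing $a^b(x,D_x)f = \sum_j a^b(x,D_x)\psi_j(D_x)f$ with $\psi_j$ supported near $|\xi|\sim 2^j$, the gain of $(1-\delta_1)\tau$ in the symbol order combined with the Hölder-$\tau$ modulus of continuity in $x$ yields summable estimates. The range restriction $s<\tau$ ensures that the Hölder regularity suffices, while $s>(1-\rho)n/p-(1-\delta)\tau$ guarantees that after balancing the loss $k_p=(1-\rho)n|1/2-1/p|$ against the gain, the remainder still maps into $H^s_p$.

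The principal obstacle is the coupled choice of $\delta_1\in(\delta,\rho)$ and the mollification scale. One needs $\delta_1$ close enough to $\delta$ that the symbol-smoothing fully exploits the Hölder regularity of $a$, yet strictly less than $\rho$ so the classical theorem applies to $a^{\#}$. The hypothesis $\tau>\frac{1-\rho}{1-\delta}\cdot\frac{n}{2}$ (when $\rho<1$) together with the range $(1-\rho)n/p-(1-\delta)\tau<s<\tau$ are precisely what is needed for such a $\delta_1$ to exist so that both the smooth estimate and the remainder estimate remain valid simultaneously; continuous dependence of $a^{\#}$ and $a^b$ on $a$ in the relevant seminorms then delivers the uniform constant $C_s$ for all $a\in\mathscr{B}$.
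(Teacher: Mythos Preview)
The paper does not give its own proof of this theorem; it simply cites \cite[Theorem 3.7]{Paper1}. So there is no in-paper argument to compare against, and your proposal is effectively a sketch of how one might establish the cited result.

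Your overall strategy---symbol smoothing $a=a^{\#}+a^b$, classical $L^p$ boundedness for the smooth part, and a Littlewood--Paley/paradifferential estimate for the rough remainder---is indeed the standard route to such results (cf.\ Taylor, Marschall). A few points in your sketch would need tightening before it becomes a proof. First, your stated order drop for $a^b$ is $(1-\delta_1)\tau$, but with mollification scale $\e_k=2^{-k\gamma}$ (here $\gamma=\delta_1$) applied to a symbol in $C^{\tau}S^{m-k_p}_{\rho,\delta}$ one obtains $a^b\in C^{\tau}S^{m-k_p-(\gamma-\delta)\tau}_{\rho,\gamma}$; compare the computation in Lemma~\ref{lemma:SymbolSmoothing3}. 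Second, for the smooth piece you write that conjugating by $\langle D_x\rangle^{\pm s}$ is ``legitimate because $a^{\#}$ lives in the classical smooth calculus,'' but $a^{\#}$ has only $M$ derivatives in $\xi$; the finite-smoothness symbolic calculus needed for that reduction must be invoked explicitly. Third, the admissible range for $s$ extends below zero, down to $(1-\rho)n/p-(1-\delta)\tau$, and your remainder argument as written only covers $s\in(0,\tau)$; reaching the full negative range typically requires a separate duality or adjoint argument, which you have not indicated. None of these is fatal to the approach, but each is a genuine step, not a formality.
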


\subsection{Symbol-Smoothing}\label{Subsection:SymbolSmoothing}

A well-known tool for proving some properties of non-smooth pseudodifferential operators of the symbol class $XS^m_{1,\delta}(\RnRn)$ for certain Banach spaces $X$ is the symbol-smoothing, see e.g. \cite[Section 1.3]{Taylor2}. 
In order to prove the Fredholm property of non-smooth pseudodifferential operators, we now generalize the tool of  symbol-smoothing for pseudodifferential operators which are non-smooth with respect to the second variable and for $\rho \neq 1$. To this end we fix two functions $\phi, \psi_0 \in C^{\infty}_0(\Rn)$ till the end of this section with the following properties:
\begin{itemize}
  \item $\phi(\xi)=1$ for all $|\xi|\leq 1$,
  \item $\psi_0 \geq 0$, $\psi_0(\xi)=1$ for all $|\xi| \leq 1$ and $\psi_0(\xi)=0$ for all $|\xi| \geq 2$.
\end{itemize}
Then we define for all $j \in \N$ the functions $\psi_j$ via
\begin{align*}
  \psi_j(\xi):= \psi_0(2^{-j}\xi)- \psi_0(2^{-j-1} \xi) \qquad \text{for all } \xi \in \Rn.
\end{align*}
Using that for any $a \in \R$ there are $C_1, C_2>0$ such that
\begin{align}\label{8e}
  C_1 \<{\xi}^{-a} \leq 2^{-ja} \leq C_2 \<{\xi}^{-a} \qquad \text{for all } \xi \in \supp(\psi_j), j \in \N
\end{align}
%%%%%%%%%%%%%%%%%%%%%%%%%%%%%%%%%%%%%
%% für den Beweis siehe z.B. Beweis von Lemma 1, Formel (i) vom Blatt 1 vom 17.02.2017
%%%%%%%%%%%%%%%%%%%%%%%%%%%%%%%%%%%
 we can show  the following properties of the  functions $\psi_j$ for all $\alpha \in \Non$:
\begin{align}\label{7e}
  \|\p^{\alpha}_{\xi}\psi_j\|_{\infty} \leq C_{\alpha}\<{\xi}^{-|\alpha|}.
\end{align}
 Additionally we define for all $\e>0$ the operator $J_{\e}$ by
\begin{align*}
  J_{\e}:= \phi(\e D_x).
\end{align*}
Note, that  for each $\alpha \in \Non$:
\begin{align}\label{30e}
  \pa{\alpha} J_{\e} = J_{\e} \pa{\alpha}.
\end{align}

The operator $J_{\e}$ has the following properties:

\begin{lemma}\label{lemma:PropertiesOfJ_E}
  For $\e, s >0$ with $s \notin \N$ we have for all $f \in C^s(\Rn)$:
  \begin{itemize}
    \item[i)] $\|D_x^{\beta} J_{\e} f\|_{\infty} \leq C \|f\|_{C^s(\Rn)}$ for all $\beta \in \Non$ with $|\beta| \leq s$,
    \item[ii)] $\|D_x^{\beta} J_{\e} f\|_{\infty} \leq C \e^{-(|\beta|-s)}\|f\|_{C^s(\Rn)}$ for all $\beta \in \Non$ with $|\beta| > s$, 
    \item[iii)] $\|D_x^{\beta} \left(1-J_{\e} \right)f \|_{C^{s-|\beta|-t}(\Rn)} \leq C \e^{t} \|f\|_{C^{s}(\Rn)}$ for all $\beta \in \Non$ with $|\beta| \leq s$ and $t \geq 0$ with $s-t-|\beta| >0$ and $s-t-|\beta| \notin \N$,
    \item[iv)] $\|D_x^{\beta} \left(1-J_{\e} \right)f \|_{\infty} \leq C_{s} \e^{s-|\beta|} \|D_x^{\beta}f\|_{C^{s-|\beta|}(\Rn)}$  for all $\beta \in \Non$ with $|\beta| \leq s$.
  \end{itemize}
\end{lemma}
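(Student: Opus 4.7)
The plan is to represent $J_\e$ as convolution with a rescaled Schwartz kernel and exploit two elementary properties repeatedly. Observe that $J_\e f = \check\phi_\e\ast f$, where $\check\phi_\e(y) := \e^{-n}\check\phi(y/\e)$ with $\check\phi := \mathcal{F}^{-1}\phi \in \mathcal{S}(\Rn)$. First, $\|\check\phi_\e\|_{L^1(\Rn)} = \|\check\phi\|_{L^1(\Rn)}$ is independent of $\e$, which gives uniform Young-type bounds. Second, since $\phi \equiv 1$ near $0$, all derivatives of $\phi$ vanish at the origin, and the Fourier identity $\int y^\alpha \check\phi(y)\,dy = i^{-|\alpha|}\p^\alpha\phi(0)$ yields the moment cancellations $\int y^\alpha \check\phi_\e(y)\,dy = \delta_{\alpha,0}$ for every $\alpha \in \Non$. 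These two facts drive the whole proof.

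For (i), since $s \notin \N$ and $|\beta| \leq s$, one has $|\beta| \leq \lfloor s\rfloor$, so $D_x^\beta$ commutes with $J_\e$: $D_x^\beta J_\e f = J_\e D_x^\beta f$, and Young's inequality produces $\|J_\e D_x^\beta f\|_\infty \leq \|\check\phi\|_{L^1}\|D_x^\beta f\|_\infty \leq C\|f\|_{C^s(\Rn)}$. For (ii) write $\beta = \gamma+\gamma'$ with $|\gamma|=\lfloor s\rfloor$ and $|\gamma'| = |\beta| - \lfloor s\rfloor \geq 1$; then $D_x^\beta J_\e f = (D^{\gamma'}\check\phi_\e)\ast D^\gamma f$. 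Since $|\gamma'|\geq 1$, integration by parts (equivalently, differentiating under the integral sign after using $\p^{\gamma'}\phi(0)=0$) yields $\int D^{\gamma'}\check\phi_\e(y)\,dy = 0$, so
\[
D_x^\beta J_\e f(x) = \int D^{\gamma'}\check\phi_\e(y)\bigl[D^\gamma f(x-y)-D^\gamma f(x)\bigr]\,dy.
\]
The bracket is bounded by $\|f\|_{C^s}|y|^{s-\lfloor s\rfloor}$, and rescaling $y = \e z$ converts the remaining integral into $\e^{s-\lfloor s\rfloor-|\gamma'|} = \e^{-(|\beta|-s)}$ times a fixed Schwartz integral, yielding the claim.

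For (iv), write $\sigma := s-|\beta|$, $m := \lfloor\sigma\rfloor$, $\sigma' := \sigma - m \in (0,1)$, and set $g := D_x^\beta f \in C^\sigma(\Rn)$. Taylor expansion at order $m$ with Hölder remainder gives
\[
g(x-y) - g(x) = \sum_{1\leq |\alpha|\leq m}\tfrac{(-y)^\alpha}{\alpha!}D^\alpha g(x) + R_m(x,y), \qquad |R_m(x,y)| \leq C|y|^\sigma \|g\|_{C^\sigma}.
\]
Using $\int \check\phi_\e(y)\,dy = 1$ together with the vanishing of $\int y^\alpha\check\phi_\e(y)\,dy$ for $1\leq|\alpha|\leq m$, the polynomial contributions drop out and $(1-J_\e)g(x) = -\int \check\phi_\e(y)R_m(x,y)\,dy$. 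Rescaling $y = \e z$ produces the factor $\e^\sigma = \e^{s-|\beta|}$, which is (iv).

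Finally, (iii) follows from (i) and (iv) by Hölder interpolation. Set $F := D_x^\beta (1-J_\e)f = (1-J_\e)D_x^\beta f$ and $r := s-|\beta|-t$. By (iv), $\|F\|_\infty \leq C\e^{r+t}\|f\|_{C^s}$; and since $J_\e$ is uniformly bounded on every Hölder space (apply Young's inequality to differences), $\|F\|_{C^{r+t}} \leq \|D_x^\beta f\|_{C^{r+t}} + \|J_\e D_x^\beta f\|_{C^{r+t}} \leq C\|f\|_{C^s}$, where $r+t = s-|\beta| \notin \N$ since $s\notin \N$. The standard Hölder-interpolation inequality
\[
\|F\|_{C^r} \leq C\|F\|_\infty^{t/(r+t)}\|F\|_{C^{r+t}}^{r/(r+t)}
\]
then gives $\|F\|_{C^r} \leq C\e^t\|f\|_{C^s}$, as desired. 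The principal technical point is (iii): the interpolation route bypasses a direct and case-heavy estimation of second-order differences $[h(x)-h(x-y)] - [h(x')-h(x'-y)]$ split according to the relative sizes of $|y|$, $|x-x'|$, and $\e$, reducing everything to the two self-contained bounds in (i) and (iv).
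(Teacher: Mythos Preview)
Your proof is correct and essentially self-contained, whereas the paper's own argument is largely by citation: it dispatches (i), (ii), and the $|\beta|=0$ cases of (iii) and (iv) by invoking \cite[Lemma 1.3.A and 1.3.C]{Taylor2}, reduces the general (iv) to its $|\beta|=0$ case via $g=D_x^\beta f\in C^{s-|\beta|}$, and treats the general $|\beta|$ in (iii) by showing that the family $\{\e^{-t}\xi^\beta\<{\xi}^{-t}(1-\phi(\e\xi)):\e\in(0,1]\}$ is bounded in $S^{|\beta|}_{1,0}$ and appealing to the mapping properties of $S^m_{1,0}$-operators on H\"older spaces. Your route---explicit convolution-kernel representation, the moment cancellations $\int y^\alpha\check\phi_\e\,dy=\delta_{\alpha,0}$ (coming from $\phi\equiv 1$ near $0$) for (ii) and (iv), and then H\"older interpolation between the $L^\infty$-bound (iv) and the trivial $C^{s-|\beta|}$-bound for (iii)---is more elementary and avoids the pseudodifferential mapping machinery. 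The paper's approach is terser and keeps (iii) inside the symbol calculus used elsewhere in the paper; your interpolation step is arguably the cleanest way to recover the intermediate $C^{s-|\beta|-t}$-norm once (iv) is available, and it has the advantage of making the $\e^t$-factor appear transparently as $(\e^{s-|\beta|})^{t/(s-|\beta|)}$.
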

%%%%%%%%%%%%%%%%%%%%%%%%%%%%%%%%%%%%%%%%%%%%%%%%%%%%%%%%%
%% Für den Beweis von iii) siehe Lemma 0.1 vom Blatt 0 vom 17.02.2017
%% Für den Beweis von iv) siehe Lemma 0.2 vom Blatt 0b) vom 17.02.2017
%%%%%%%%%%%%%%%%%%%%%%%%%%%%%%%%%%%%%%%%%%%%%%%%%%%%%%%%%%
\begin{proof}
  On account of \cite[Lemma 1.3C]{Taylor2} the claims i), ii) and claim iv) in the case $|\beta| =0$ hold true. An application of the case $|\beta|=0$ on $g := D_x^{\beta}f \in C^{s-|\beta|}(\Rn)$ provides the general case of claim iv). Because of \cite[Lemma 1.3.A]{Taylor2} we additionally obtain  claim iii) for the case $|\beta| =0$. It remains to verify claim iv) for general  $\beta \in \Non$ with $|\beta| \leq s$. This can be done similarly to the proof of the case $|\beta|=0$. For the convenience of the reader we give a short proof of claim iii) for arbitrary  $\beta \in \Non$ with $|\beta| \leq s$, now. 
  Due to the boundedness of $\{ \e^{-t}\<{\xi}^{-t} (1-\phi(\e \xi)): \e \in (0,1] \} \subseteq S^0_{1,0}(\Rn_x \times \Rn_{\xi}) $ and due to $\frac{\xi^{\beta}}{\<{\xi}^{|\beta|}} \in S^0_{1,0}(\Rn_x \times \Rn_{\xi})$ we get the boundedness of 
  \begin{align*}
    \left\{ \e^{-t} \xi^{\beta}\<{\xi}^{-t} (1-\phi(\e \xi)): \e \in (0,1] \right\} \subseteq S^{|\beta|}_{1,0}(\Rn_x \times \Rn_{\xi}). 
  \end{align*}
  Since $\<{D_x}^{-t}$ and $D_x^{\beta}$ commute, we obtain claim iii) in the general case. 
\end{proof}

\begin{Def}\label{Def:SymbolSmoothing}
  Let $\tilde{m} \in \N_0$, $0<\tau <1$, $M \in \N_0 \cup \{ \infty\}$ $m \in \R$ and $0 \leq \delta \leq \rho \leq 1$. For $\gamma \in (\delta, 1)$ we set $\e_j:= 2^{-j\gamma}$. For each $a \in C^{\tilde{m}, \tau} S^{m}_{\rho, \delta}(\RnRn; M)$ we define 
  \begin{itemize}
    \item $a^{\sharp}(x, \xi):= \sum\limits_{j=0}^{\infty} J_{\e_j} a(x, \xi)\psi_j(\xi)$ for all $x, \xi \in \Rn$,
    \item $a^b(x, \xi):= a(x, \xi)-a^{\sharp}(x, \xi)$ for all $x, \xi \in \Rn$.
  \end{itemize}
\end{Def}
%%%%%%%%%%%%%%%%%%%%%%%%%%%%%%%%%%%%%%
%% Definition Symbolzerlegung vom Baltt 1 vom 17.02.2017
%%%%%%%%%%%%%%%%%%%%%%%%%%%%%%%%%%%%%%

Our aim is to verify useful properties of the  functions $a^{\sharp}$ and $a^b$ needed later on. To this end two new symbol-classes are needed, which we define, now. 

\begin{Def}\label{Def:SlowlyVaryingSymbols}
  Let $\tilde{m} \in \N_0$, $0 < \tau < 1$, $m \in \R$, $0 \leq \delta, \rho \leq 1$ and $M \in \N_0 \cup \{ \infty \}$. 
  Then  $a \in C^{\tilde{m}, \tau} S^m_{\rho, \delta} (\RnRn; M)$ belongs to the symbol-class $C^{\tilde{m}, \tau} \dot{S}^m_{\rho, \delta} (\RnRn; M)$, if for all $\alpha, \beta \in \Non$ with $|\alpha| \leq M$ and $|\beta| \leq \tilde{m}$ we have
  \begin{align*}
    |\pa{\alpha} D_x^{\beta} a(x, \xi)| \leq C_{\alpha, \beta}(x) \<{\xi}^{m-\rho|\alpha|+\delta|\beta|} \qquad \text{for all } x, \xi \in \Rn,
  \end{align*}
  where $C_{\alpha, \beta}(x)$ is a bounded function, which converges to zero, as $|x| \rightarrow \infty$. \\
  Moreover, $a \in C^{\tilde{m}, \tau} S^m_{\rho, \delta} (\RnRn; M)$ belongs to the symbol-class $C^{\tilde{m}, \tau} \tilde{S}^m_{\rho, \delta} (\RnRn; M)$, if for all $\beta \in \Non$ with $|\beta| \leq \tilde{m}$ and $|\beta | \neq 0$ we have
  \begin{align*}
    D_x^{\beta} a(x, \xi) \in C^{\tilde{m}-|\beta|, \tau} \dot{S}^{m+\delta|\beta|}_{\rho, \delta} (\RnRn; M).
  \end{align*}
  We call the elements of  $C^{\tilde{m}, \tau} \tilde{S}^m_{\rho, \delta} (\RnRn; M)$ \textbf{slowly varying symbols}.
  Moreover,  $a:\RnRn \rightarrow \mathcal{L}(\C^N)$ is an element of the symbol-class $C^{\tau}  \dot{S}^m_{\rho,\delta}(\Rn \times \Rn;M;\mathcal{L}(\C^N))$ respectively $C^{\tau}  \tilde{S}^m_{\rho,\delta}(\Rn \times \Rn;M;\mathcal{L}(\C^N))$, $m \in \R$, $N\in\N$, if and only if $a_{j,k}\in C^{\tau} \dot{S}^m_{\rho,\delta}(\Rn \times \Rn;M)$ respectively $a_{j,k}\in C^{\tau} \tilde{S}^m_{\rho,\delta}(\Rn \times \Rn;M)$ for all $j,k=1,\ldots,N$, where we identify $A\in \mathcal{L}(\C^N))$ with a matrix $(a_{j,k})_{j,k=1}^N\in \C^{N\times N}$ in the standard way.
\end{Def}
%%%%%%%%%%%%%%%%%%%%%%%%%%%%%%%%%%%%%%%%%%%%%%%%%
%% siehe Definition 1 vom Blatt 1 vom 22.03.2017
%%%%%%%%%%%%%%%%%%%%%%%%%%%%%%%%%%%%%%%%%%%%%%%%%

The properties of the functions $a^{\sharp}$ and $a^b$ are summarized in the next three lemmas:

\begin{lemma}\label{lemma:SymbolSmoothing3}
  Let $0 \leq \delta < \rho \leq 1$, $\tilde{m} \in \N$, $0<\tau <1$, $M \in \N \cup \{ \infty \}$, $m \in \R$ and $a \in C^{\tilde{m}, \tau} S^m_{\rho, \delta}(\RnRn; M)$. Moreover let $\gamma \in (\delta, \rho)$. Then we have for $\tilde{\e} \in (0, (\gamma-\delta)\tau))$:
  \begin{itemize}
      \item[i)] $D_x^{\beta} a^{b}(x, \xi) \in C^{\tilde{m}-|\beta|, \tau} S^{m-(\gamma-\delta)(\tilde{m}+\tau)+\gamma|\beta|}_{\rho, \gamma}(\RnRn; M) \forall \beta \in \Non$ with $|\beta| \leq \tilde{m}$,
      \item[ii)] $a^{b}(x, \xi) \in C^{\tilde{m}, \tau} \tilde{S}^{m-(\gamma-\delta)(\tilde{m}+\tau) + \tilde{\e}}_{\rho, \gamma}(\RnRn; M)$ if $a \in C^{\tilde{m}, \tau} \tilde{S}^m_{\rho, \delta}(\RnRn; M)$,
      \item[iii)] $a^{b}(x, \xi) \in C^{\tilde{m}, \tau} \dot{S}^{m-(\gamma-\delta)(\tilde{m}+\tau) + \tilde{\e}}_{\rho, \gamma}(\RnRn; M)$  if $a \in C^{\tilde{m}, \tau} \dot{S}^m_{\rho, \delta}(\RnRn; M)$.
  \end{itemize}
\end{lemma}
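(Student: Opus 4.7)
The plan is to prove the three parts in the order (i), (iii), (ii), by unfolding $a^b(x,\xi) = \sum_{j \geq 0}(1-J_{\e_j})a(x,\xi)\psi_j(\xi)$ and exploiting that $\<{\xi}\sim 2^j$ on $\supp\psi_j$, so $\e_j = 2^{-j\gamma}$ behaves like $\<{\xi}^{-\gamma}$ via (\ref{8e}). Write $T := \tilde m+\tau$. For (i), fix $\alpha,\beta\in\Non$ with $|\alpha|\leq M$, $|\beta|\leq\tilde m$, and use that $J_{\e_j}$ commutes with $D_x^\beta$ and $\pa{\alpha_1}$ (by (\ref{30e})), together with the Leibniz rule, to write $D_x^\beta\pa{\alpha}a^b(x,\xi)$ as a sum over $\alpha_1+\alpha_2=\alpha$ and over $j$ of terms $(1-J_{\e_j})D_x^\beta\pa{\alpha_1}a(x,\xi)\cdot\pa{\alpha_2}\psi_j(\xi)$. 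For each such term, Lemma \ref{lemma:PropertiesOfJ_E} iv) applied with $s=T$ to $\pa{\alpha_1}a(\cdot,\xi)\in C^{\tilde m,\tau}(\Rn)$ supplies the gain $\e_j^{T-|\beta|}\sim\<{\xi}^{-\gamma(T-|\beta|)}$. Multiplying by the defining $C^{\tilde m,\tau}$-bound $\<{\xi}^{m-\rho|\alpha_1|+\delta T}$ on $\pa{\alpha_1}a$ and by $|\pa{\alpha_2}\psi_j(\xi)|\leq C\<{\xi}^{-|\alpha_2|}$ from (\ref{7e}), invoking $\rho\leq 1$, and summing the finitely many non-zero $j$ at each $\xi$, gives the pointwise bound $C\<{\xi}^{m-(\gamma-\delta)T+\gamma|\beta|-\rho|\alpha|}$ demanded by property iii) of the target class. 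The companion $C^{\tilde m-|\beta|,\tau}$-norm estimate agrees with the one inherited from $a$ itself plus a symmetric estimate for $a^\sharp$ controlled by the uniform $C^{\tilde m,\tau}$-boundedness of $J_{\e_j}$ (Lemma \ref{lemma:PropertiesOfJ_E} i)) and the same $\psi_j$-summation.

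Next I would observe that $D_x^\beta$ commutes with every $J_{\e_j}$, hence $D_x^\beta a^b = (D_x^\beta a)^b$. This reduces (ii) to applying (iii) to each $D_x^\beta a\in C^{\tilde m-|\beta|,\tau}\dot S^{m+\delta|\beta|}_{\rho,\delta}$; a direct bookkeeping check shows that the resulting order $m+\delta|\beta|-(\gamma-\delta)(\tilde m-|\beta|+\tau)+\tilde\e$ equals the target $m-(\gamma-\delta)T+\tilde\e+\gamma|\beta|$. The substance of (iii) is therefore to propagate the decay $C_{\alpha,\beta}(x)\to 0$ through $(1-J_{\e_j})$. Writing $J_{\e_j}g = K_{\e_j}\ast g$ with $K := \mathcal{F}^{-1}\phi\in\mathcal{S}(\Rn)$ and $K_{\e_j}(y) = \e_j^{-n}K(y/\e_j)$, I would split the convolution at $|y|=|x|/2$: the inner part is dominated by $\sup_{|z|\geq|x|/2}|g(z)|\cdot\|K\|_{L^1}$ which vanishes as $|x|\to\infty$ by the slow-variation of $g$, while the outer part is dominated by $\|g\|_\infty\cdot\int_{|y|\geq|x|/2}|K_{\e_j}(y)|\,dy\leq C_N\|g\|_\infty(\e_j/|x|)^N$, also vanishing as $|x|\to\infty$. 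This produces a pointwise bound $|(1-J_{\e_j})D_x^\beta\pa{\alpha_1}a(x,\xi)|\leq A(x) := C'(x)\<{\xi}^{m-\rho|\alpha_1|+\delta|\beta|}$ with $C'(x)\to 0$, but without any $\e_j$-gain.

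The final step is the interpolation of this ``slow-variation but no gain'' bound $A(x)$ with the ``full gain but global'' bound $B := C\<{\xi}^{m-(\gamma-\delta)T+\gamma|\beta|-\rho|\alpha_1|}$ already produced in (i), via $|(1-J_{\e_j})D_x^\beta\pa{\alpha_1}a(x,\xi)|\leq A(x)^\theta B^{1-\theta}$. A short calculation gives the resulting $\<{\xi}$-exponent $m-\rho|\alpha_1|+\gamma|\beta|-(\gamma-\delta)[\theta|\beta|+(1-\theta)T]$, which is dominated by the target exponent $m-(\gamma-\delta)T+\tilde\e+\gamma|\beta|-\rho|\alpha_1|$ precisely when $(\gamma-\delta)\theta(T-|\beta|)\leq\tilde\e$. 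Choosing $\theta := \tilde\e/((\gamma-\delta)T)\in(0,1)$, which is admissible since $\tilde\e<(\gamma-\delta)\tau\leq(\gamma-\delta)T$, secures this inequality uniformly in $|\beta|\leq\tilde m$ while preserving the decay $C'(x)^\theta\to 0$, finishing (iii) and hence (ii). The main obstacle is exactly this interpolation step: the $C^{\tilde m,\tau}$-norm on the right of Lemma \ref{lemma:PropertiesOfJ_E} iv) is global in $x$ and would destroy the slow-variation property if used alone, and only the convex combination with strictly positive $\theta$ restores the decay at infinity, at the price of the quantified loss $\tilde\e$ in the symbol order.
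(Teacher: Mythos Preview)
Your proposal is correct and follows essentially the same route as the paper: extract the gain $\e_j^{T-|\beta|}\sim\<{\xi}^{-\gamma(T-|\beta|)}$ from Lemma~\ref{lemma:PropertiesOfJ_E}~iv) for (i), propagate the $|x|\to\infty$ decay through the convolution $(1-J_{\e_j})$ for (iii), and then interpolate the two bounds with a strictly positive exponent on the decaying factor to recover both features at the price of~$\tilde\e$; the reduction $D_x^\beta a^b=(D_x^\beta a)^b$ for (ii) is exactly what the paper does implicitly by treating (ii) and (iii) together. The only cosmetic differences are that the paper gets the $C^{\tilde m-|\beta|,\tau}$-estimate in (i) directly from Lemma~\ref{lemma:PropertiesOfJ_E}~iii) (note that your citation of Lemma~\ref{lemma:PropertiesOfJ_E}~i) gives only $L^\infty$ control; what you actually need is the elementary fact that $J_\e$ is convolution with a kernel of uniform $L^1$-norm), that the paper splits the convolution in (iii) via a fixed cutoff $\eta$ at radius $R$ chosen per $\varepsilon$ rather than at $|y|=|x|/2$, and that the interpolation parameters differ only by the swap $\theta\leftrightarrow 1-\theta$.
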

%%%%%%%%%%%%%%%%%%%%%%%%%%%%%%%%%%%%%
%% Lemma 1 vom Blatt 1) vom 17.02.2017 
%%%%%%%%%%%%%%%%%%%%%%%%%%%%%%%%%%%%
\begin{proof}
  We begin with the proof of $i)$. We choose an arbitrary $\xi \in \Rn$ and set $N:= \{j \in \N_0: \xi \in \supp \psi_j \}$. Then $\sharp N \leq 5$. Using $a^{\sharp}(.,\xi) = \sum_{j \in N} J_{\e_j}a(.,\xi) \psi_{j}(\xi)$ and the Leibniz rule yields for all $\alpha, \beta \in \Non$ with $|\alpha| \leq M$ and $|\beta| \leq \tilde{m}$
  \begin{align*}
    |\pa{\alpha} D_x^{\beta}a^b(x,\xi)|
    &= \left| \pa{\alpha} D_x^{\beta} \sum_{j=0}^{\infty}(1-J_{\e_j})(p(x, \xi)\psi_j(\xi)) \right| \\
    &\leq \sum_{j \in N} \sum_{\alpha_1  + \alpha_2= \alpha} C_{\alpha} \| (1-J_{\e_j})(\pa{\alpha_1}D_x^{\beta} p(x, \xi) \pa{\alpha_2} \psi_j(\xi) )\|_{L^{\infty}(\Rn_x)}
  \end{align*}
  An application of Lemma \ref{lemma:PropertiesOfJ_E} iv), (\ref{8e}) and (\ref{7e}) to the previous estimate provides:
  \begin{align}\label{32e}
    |\pa{\alpha} D_x^{\beta}a^b(x,\xi)|
    &\leq \sum_{j \in N} \sum_{\alpha_1  + \alpha_2=\alpha} C_{\alpha} \e_j^{\tilde{m}+\tau-|\beta|} \|(\pa{\alpha_1} D_x^{\beta} p(x, \xi) \pa{\alpha_2} \psi_j(\xi) )\|_{C^{\tilde{m}+\tau-|\beta|}(\Rn_x)} \notag\\
    &\leq \sum_{j \in N} \sum_{\alpha_1  + \alpha_2=\alpha} C_{\alpha} \<{\xi}^{-\gamma(\tilde{m}+\tau-|\beta|)} |\pa{\alpha_2} \psi_j(\xi)| \|\pa{\alpha_1} D_x^{\beta} p(x, \xi)  \|_{C^{\tilde{m}+\tau-|\beta|}(\Rn_x)} \notag\\
    &\leq C_{\alpha, \tilde{m}, \tau} \<{\xi}^{m-(\gamma-\delta)(\tilde{m} + \tau) + \gamma|\beta|-\rho|\alpha|} \qquad \text{for all } x,\xi \in \Rn.
  \end{align}

  Similarly we get by means of (\ref{30e}), the Leibniz rule, Lemma \ref{lemma:PropertiesOfJ_E} iii) and (\ref{7e})  for all $\alpha, \beta \in \Non$ with $|\alpha|\leq M$ and $|\beta| \leq \tilde{m}$:
  \begin{align}\label{31e}
    \|\pa{\alpha}D_x^{\beta} a^b(., \xi)\|_{C^{\tilde{m}-|\beta|, \tau}(\Rn)} \leq C_{\alpha, \beta} \<{\xi}^{m-(\gamma-\delta)(\tilde{m}+\tau)  +\gamma |\beta| -\rho|\alpha| + \gamma(\tilde{m}-|\beta| + \tau)} 
  \end{align}
  for all $\xi \in \Rn$. 
  On account of (\ref{31e}) and (\ref{32e}) claim i) holds. 
  
  Our next goal is show $ii)$ and $iii)$. 
  In order to prove the claim, we assume $a \in C^{\tilde{m}, \tau} \dot{S}^m_{\rho, \delta}(\RnRn; M)$ or $a \in C^{\tilde{m}, \tau} \tilde{S}^m_{\rho, \delta}(\RnRn; M)$. Additionally we fix some arbitrary $\alpha, \beta \in \Non$ with $|\alpha| \leq M$, $|\beta| \leq \tilde{m}$ and $|\beta| \neq 0$ if $a \in C^{\tilde{m}, \tau} \tilde{S}^m_{\rho, \delta}(\RnRn; M)$. We choose an arbitray $\e >0$. As before we fix an arbitrary $\xi \in \Rn$ and set $N:= \{j \in \N_0: \xi \in \supp \psi_j \}$. Moreover we define for all $j \in \N_0$ the functions $\varphi_{\e_j}, g_{\e_j}, g:\Rn \rightarrow \C$ via
  \begin{itemize}
    \item $\varphi_{\e_j}:= \delta_0- \mathscr{F}^{-1}_{\xi \rightarrow x}\left[ \phi(\e_j \xi) \right] $ in $\sd$,
    \item $g_{\e_j}(x):= \mathscr{F}^{-1}_{\xi \rightarrow x}\left[ \phi(\e_j \xi) \right](x)$ for all $x \in \Rn,$
    \item $g(x):=\mathscr{F}^{-1}_{\xi \rightarrow x}\left[ \phi(\xi) \right](x)$ for all $x \in \Rn$.
  \end{itemize}
  By means of integration by parts and the Theorem of Fubini, we obtain for each $j \in \N$
  \begin{align}\label{39e}
    \left[ 1-\phi (\e_j D_x) \right]f = \varphi_{\e_j} \ast f(x) \qquad \text{for all } f \in C^0_b(\Rn).
  \end{align}
  Since we can change the order of the two operators $D_x^{\beta}$ and $(1-J_{\e_j})$ an straight forward calculation yields if we use $a^{\sharp}(.,\xi) = \sum_{j \in N} J_{\e_j}a(.,\xi) \psi_{j}(\xi)$ and (\ref{39e}):
  \begin{align}\label{41e}
    |\pa{\alpha} D_x^{\beta}a^b(x,\xi)|
%     &= \left| \pa{\alpha} D_x^{\beta} \sum_{j \in N} (1-J_{\e_j})\left[ a(x,\xi) \psi_j(\xi) \right] \right| \notag \\
    =\left| \varphi_{\e_j} \ast \left\{ \sum_{j \in N} \pa{\alpha} \left[ D_x^{\beta} a(., \xi) \psi_j(\xi)\right] \right\}(x) \right|
  \end{align}
  Our task is to use the previous equality in order to show for $\tilde{\e} \in \left( 0, (\gamma-\delta)\tau \right)$:
  \begin{align}\label{40e}
    |\pa{\alpha} D_x^{\beta}a^b(x,\xi)| \leq C_{\alpha, \beta} (x)\<{\xi}^{-m-\delta|\beta| + (\gamma-\delta)(\tilde{m}-|\beta|+\tau) -\tilde{\e} + \rho|\alpha|} \xrightarrow{|x| \rightarrow \infty} 0.
  \end{align}
  Then a combination of (\ref{31e}) and (\ref{32e}) and (\ref{40e}) yields claim $ii)$ and $iii)$. It remains to verify (\ref{40e}). The properties of the Fourier transform imply $g_{\e_j}, g \in \s$ for all $j \in \N_0$. Consequently $\<{y}^{n+1} g_{\e_j}(y) \in \sindo{y}$ for all $j \in N$. On account of the choice of $a$ we get using (\ref{7e}):
  \begin{align}\label{34e}
    \sum_{j \in N} \left| \pa{\alpha} \left\{ D_x^{\beta} a(x,\xi) \psi_j(\xi) \right\} \right|
    \leq A_1 \<{\xi}^{m-\rho|\alpha| + \delta |\beta|}
  \end{align}
  where $A_1$ is independent of $x,\xi \in \Rn$. Due to  $\<{y}^{n+1} g_{\e_j}(y) \in \sindo{y}$ for all $j \in N$ we can choose an $R>1$ such that for $A_2:= \intr \<{y}^{-n-1} dy$ we have
  \begin{align}\label{33e}
    \left| \<{y}^{n+1} g_{\e_j}(y)\right| < \frac{\e}{2A_1A_2} \qquad \text{for all } y \in \Rn \backslash \overline{B_{R-1}(0)} \text{ and } j \in N.
  \end{align}
  In addition we choose an $\eta \in C^{\infty}_0(\Rn)$ such that $\eta(x) \in [0,1]$, $\eta(x)=1$ if $|x| \leq R-1$ and $\eta(x)=0$ if $|x| \geq R$. Then we obtain for all $x \in \Rn$ by means of Lemma \ref{lemma:PropertiesOfJ_E} iv), (\ref{34e}) and (\ref{33e}):
  \begin{align}\label{35e}
    &\left| \left[ \varphi_{\e_j} (1-\eta)\ast \sum_{j \in N} \pa{\alpha} \left\{ D_x^{\beta} a(., \xi) \psi_j(\xi) \right\} \right](x)\right|\nonumber\\
    &\quad \qquad \leq \int_{\Rn \backslash \overline{B_{R-1}(0)} } |\varphi_{\e_j}(y)| |(1-\eta)(y)| \cdot \left\| \sum_{j \in N} \pa{\alpha} \left\{ D_x^{\beta} a(., \xi) \psi_j(\xi) \right\}(x-y) \right\|_{L^{\infty}(\Rn_x)} dy \nonumber\\
    &\quad \qquad \leq \frac{\e}{2} \<{\xi}^{m-\rho|\alpha|+\delta|\beta|}.
  \end{align}
%   An analog calculation provides for all $x \in \Rn$:
%   \begin{align}\label{36e}
%     \left| \left[ \varphi_{\e_j} (1-\eta)\ast \sum_{j \in N} \pa{\alpha} \left\{ D_x^{\beta} p(., \xi) \psi_j(\xi) \right\} \right](x)\right|
%     \leq A_1 \<{\xi}^{m-\rho|\alpha|+\delta|\beta|} \intr |\varphi_{\e_j}(y)| dy.
%   \end{align}
  On account of the properties of the Fourier transform and due to the definition of $\varphi_{\e_j}$ we get using $g \in \s$:
  \begin{align}\label{37e}
    \int_{\Rn \backslash \overline{B_{R-1}(0)}} |\varphi_{\e_j}| dy \leq  \intr \e_j^{-n} \left| g\left( \frac{y}{\e_j} \right) \right| dy =\intr |g(z)| dz =: B_1 <\infty,
  \end{align}
  where $B_1$  is independent of $j \in \N$. 
%   A combination of (\ref{36e}) and (\ref{37e}) yields for all $x \in \Rn$:
%   \begin{align}\label{38e}
%     \left| \left[ \varphi_{\e_j} (1-\eta)\ast \sum_{j \in N} \pa{\alpha} \left\{ D_x^{\beta} p(., \xi) \psi_j(\xi) \right\} \right](x)\right|
%     \leq C  \<{\xi}^{m-\rho|\alpha|+\delta|\beta|}.
%   \end{align}
  The choice of the symbol $a$ and the multi-index $\beta$ gives us the existence of an $\tilde{R}>0$ such that for all $|x| \geq \tilde{R}+R-1$ and for all $y \in \overline{B_{R-1}(0)}$ we have
  \begin{align}\label{42e}
    \left|  \sum_{j\in N} \pa{\alpha} \left\{ D_x^{\beta} a(., \xi) \psi_j(\xi) \right\}(x) \right|
    \leq \frac{\e}{2B_1} \<{\xi}^{m-\rho|\alpha|+\delta|\beta|}.
  \end{align} 
  Using (\ref{41e})  we obtain for all $x \in \Rn$ with $|x| \geq \tilde{R} + R-1$:
  \begin{align*}
    |\pa{\alpha} D_x^{\beta} a^b(x,\xi)|
     %=\left| \varphi_{\e_j} \ast \left\{ \sum_{j \in N} \pa{\alpha} \left[ D_x^{\beta} a(., \xi) \psi_j(\xi)\right] \right\} \right|
     &\leq \left| \intr \e_j^{-n}g(\frac{y}{\e_j}) \eta(y) \sum_{j \in N} \pa{\alpha} \left[ D_x^{\beta} a(x-y, \xi) \psi_j(\xi) \right] dy \right| \\
	& \quad + \left| \sum_{j \in N} \pa{\alpha} \left[ D_x^{\beta} a(x-y, \xi) \psi_j(\xi) \right]  \right| \\
 	 & \quad + \left| \intr \varphi_{\e_j}(y) [1-\eta](y) \sum_{j \in N} \pa{\alpha} \left[ D_x^{\beta} a(x-y, \xi) \psi_j(\xi) \right] dy  \right|.
  \end{align*}
    Now we use (\ref{35e}) in order to estimate the second summand of the previous inequality. The integrand of the first summand is always $0$ if $|y| \geq R$. Hence we can estimate the first summand of the previous inequality by means of (\ref{42e}) and (\ref{37e}). Then we get $|\pa{\alpha} D_x^{\beta} a^b(x,\xi)| \leq \e \<{\xi}^{m-\rho|\alpha|+\delta|\beta|}$ for all $x\in \Rn$ with $|x| \geq \tilde{R}+R-1$. Hence 
    \begin{align}\label{43e}
      |\pa{\alpha} D_x^{\beta} a^b(x,\xi)|  \<{\xi}^{-m+\rho|\alpha|-\delta|\beta|} \leq C_{\alpha,\beta}(x) \xrightarrow{|x| \rightarrow \infty } 0.
    \end{align}
   Now let $\tilde{\e}$ be as in the assumptions. Setting $\theta:= \frac{(\gamma-\delta)(\tilde{m}-|\beta| +\tau)-\tilde{\e} }{(\gamma-\delta)(\tilde{m}-|\beta| +\tau)}$ we get by means of interpolation with (\ref{32e}) and (\ref{43e}), that estimate (\ref{40e}) holds:
   \begin{align*}
      |\pa{\alpha} D_x^{\beta} a^b(x,\xi)|  \<{\xi}^{-m+ (\gamma-\delta)(\tilde{m}-|\beta|+\tau)-\tilde{\e}+\rho|\alpha|-\delta|\beta|} \leq C_{\alpha, \beta}(x)^{1-\theta} C_{\alpha, \tilde{m}, \tau}^{\theta} \xrightarrow{|x| \rightarrow \infty} 0.
   \end{align*}
  Hence the lemma is proved. 
\end{proof}

\begin{lemma}\label{lemma:SymbolSmoothing2}
  Let $0 \leq \delta < \rho \leq 1$, $\tilde{m} \in \N$, $0<\tau <1$, $M \in \N \cup \{ \infty \}$, $m \in \R$ and $a \in C^{\tilde{m}, \tau} S^m_{\rho, \delta}(\RnRn; M)$. Moreover let $\gamma \in (\delta, \rho)$. Then we have for all $\beta \in \Non$ with $|\beta| \leq \tilde{m}$: 
  \begin{itemize}
      \item[i)] $D_x^{\beta} a^{\sharp}(x, \xi) \in S^{m+\delta|\beta|}_{\rho, \gamma}(\RnRn; M)$,
      \item[ii)] if $a \in C^{\tilde{m}, \tau} \dot{S}^m_{\rho, \delta}(\RnRn; M)$ or if $|\beta| \neq 0$ and $a \in C^{\tilde{m}, \tau} \tilde{S}^m_{\rho, \delta}(\RnRn; M)$ then $D_x^{\beta} a^{\sharp}(x, \xi) \in \dot{S}^{m+\delta|\beta|}_{\rho, \gamma}(\RnRn; M)$.
  \end{itemize}
\end{lemma}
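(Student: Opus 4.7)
The plan is to mirror the argument for Lemma \ref{lemma:SymbolSmoothing3}, but estimate the retained (not the removed) part of the dyadic decomposition, so that $J_{\e_j}$ appears in place of $1-J_{\e_j}$. I would fix $\alpha,\gamma'\in\Non$ with $|\alpha|\leq M$, pick $\xi$, and set $N:=\{j\in\N_0:\xi\in\supp\psi_j\}$, so that $\sharp N\leq 5$. Since $D_x^{\beta}$ commutes with $J_{\e_j}$ (in the distributional sense, using $|\beta|\leq\tilde m$ and $a\in C^{\tilde m,\tau}$), Leibniz in $\xi$ gives
\begin{align*}
\pa{\alpha}D_x^{\gamma'}D_x^{\beta}a^{\sharp}(x,\xi) = \sum_{j\in N}\sum_{\alpha_1+\alpha_2=\alpha}\binom{\alpha}{\alpha_1}\pa{\alpha_2}\psi_j(\xi)\,D_x^{\gamma'}J_{\e_j}\pa{\alpha_1}D_x^{\beta}a(x,\xi).
\end{align*}
From (\ref{7e}) one has $|\pa{\alpha_2}\psi_j(\xi)|\leq C\<{\xi}^{-|\alpha_2|}$, and Remark \ref{bem:AbschatzungNichtglattesSymbol} gives $\|\pa{\alpha_1}D_x^{\beta}a(\cdot,\xi)\|_\infty\leq C\<{\xi}^{m-\rho|\alpha_1|+\delta|\beta|}$. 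The new ingredient I need is a Bernstein-type estimate $\|D_x^{\gamma'}J_{\e_j}f\|_\infty\leq C_{\gamma'}\e_j^{-|\gamma'|}\|f\|_\infty$, which follows directly by writing $D_x^{\gamma'}J_{\e_j}=\op(\xi^{\gamma'}\phi(\e_j\xi))$, whose convolution kernel is $\e_j^{-n-|\gamma'|}(D^{\gamma'}g)(\cdot/\e_j)$ with $g=\mathscr{F}^{-1}\phi\in\mathcal{S}$, combined with Young's inequality.

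For part i), using the key relation (\ref{8e}) in the form $\e_j^{-|\gamma'|}\leq C\<{\xi}^{\gamma|\gamma'|}$ on $\supp\psi_j$, the three bounds combine to
\begin{align*}
|\pa{\alpha}D_x^{\gamma'}D_x^{\beta}a^{\sharp}(x,\xi)|\leq C\sum_{\alpha_1+\alpha_2=\alpha}\<{\xi}^{m+\delta|\beta|+\gamma|\gamma'|-\rho|\alpha_1|-|\alpha_2|}\leq C\<{\xi}^{m+\delta|\beta|+\gamma|\gamma'|-\rho|\alpha|},
\end{align*}
where I use $\rho\leq 1$ in the last step. Joint continuity of the required derivatives in $(x,\xi)$ up to the claimed orders follows because $J_{\e_j}$ smooths in $x$ and the sum in $j$ is locally finite in $\xi$; the range of admissible $\alpha$ is dictated by $M$, inherited from $a$.

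For part ii), I would run exactly the same decomposition but retain the pointwise $x$-dependence. Under either hypothesis, Definition \ref{Def:SlowlyVaryingSymbols} yields $|\pa{\alpha_1}D_x^{\beta}a(y,\xi)|\leq C_{\alpha_1,\beta}(y)\<{\xi}^{m+\delta|\beta|-\rho|\alpha_1|}$ with a bounded $C_{\alpha_1,\beta}(y)$ that vanishes as $|y|\to\infty$. Inserting the kernel representation and substituting $y'=y/\e_j$ gives
\begin{align*}
|D_x^{\gamma'}J_{\e_j}\pa{\alpha_1}D_x^{\beta}a(x,\xi)|\leq \e_j^{-|\gamma'|}\<{\xi}^{m+\delta|\beta|-\rho|\alpha_1|}\int C_{\alpha_1,\beta}(x-\e_j y')\,|D^{\gamma'}g(y')|\,dy'.
\end{align*}
Since $\e_j\leq 1$ for all $j\geq 0$, a standard splitting of the $y'$-integral into $|y'|\leq R'$ and $|y'|>R'$ combined with the $L^1$-integrability of $D^{\gamma'}g$ and the vanishing of $C_{\alpha_1,\beta}$ at infinity shows that this integral tends to $0$ as $|x|\to\infty$, uniformly in $j\in N$. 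Using (\ref{8e}) and summing the at most five terms in $j\in N$ now produces the required $\dot{S}^{m+\delta|\beta|}_{\rho,\gamma}$-bound with an $x$-dependent constant vanishing at infinity.

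The main technical point is that Lemma \ref{lemma:PropertiesOfJ_E} itself is not quite what is needed here: it is tailored to $1-J_{\e}$ losing an $\e^{s}$ factor (as exploited in Lemma \ref{lemma:SymbolSmoothing3}), whereas here I need the opposite Bernstein estimate, which loses only $\e^{-|\gamma'|}$ when the extra $x$-derivatives land on $J_{\e}f$. This has to be supplied separately from the Fourier-side description of $J_{\e}$, but the verification is entirely routine. A secondary subtlety is ensuring that the pointwise decay in part ii) is uniform in $j\in N$, which hinges precisely on $\e_j\leq 1$ so that the rescaling $\e_j y'$ does not stretch the tail of $C_{\alpha_1,\beta}$ back toward the origin.
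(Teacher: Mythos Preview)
Your proof is correct and follows the same overall strategy as the paper's (localize to $j\in N$ with $\sharp N\leq 5$, apply Leibniz in $\xi$, and for part ii) use the explicit convolution kernel together with a near/far splitting). The difference is chiefly in part i): the paper splits into the two regimes $|\tilde\beta|\leq \tilde m-|\beta|$ and $|\tilde\beta|+|\beta|\geq \tilde m$, handling the first via the interpolation Lemma~\ref{lemma:InterpolationResult} and the boundedness of $J_{\e_j}$ on $L^\infty$, and the second via Lemma~\ref{lemma:PropertiesOfJ_E}\,ii). Your single Bernstein estimate $\|D_x^{\gamma'}J_{\e_j}f\|_\infty\leq C\e_j^{-|\gamma'|}\|f\|_\infty$ covers both cases at once, which is a genuine simplification (at the cost of not recording the sharper exponent $\delta|\tilde\beta|$ that the low-derivative case actually yields, but $\gamma|\tilde\beta|$ is all that is claimed). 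For part ii) the two arguments are essentially identical: the paper writes the kernel of $D_x^{\tilde\beta}J_{\e_j}$ as a rescaling of $\mathscr F^{-1}(\xi^{\tilde\beta}\phi)$ and splits via a cutoff $\eta$; you substitute $y'=y/\e_j$ and split by radius. Your remark that uniformity in $j$ hinges on $\e_j\leq 1$ is exactly the point the paper's argument also uses implicitly.

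One small quibble: Lemma~\ref{lemma:PropertiesOfJ_E}\,ii) \emph{is} the relevant Bernstein-type estimate (in its $C^s$-form), and the paper does use it; your $L^\infty$-version is not literally stated there but is the trivial $s\to 0$ variant, so ``has to be supplied separately'' slightly overstates the gap.
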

%%%%%%%%%%%%%%%%%%%%%%%%%%%%%%%%%%%%%
%% Lemma 1b) vom Blatt 3b) vom 17.02.2017 
%%%%%%%%%%%%%%%%%%%%%%%%%%%%%%%%%%%%
\begin{proof}
  %ähnlichkeit zum Fall von Taylor nur in der Nutzung von den Eigenschaften des Operators $J_{\e}$. 
  Note, that we get by means of the generalized Young inequality
  \begin{align*}
    \|\phi(\e D_x)\|_{\mathscr{L}(L^{\infty}(\Rn))} = \sup_{\|f\|_{\infty} \leq 1} \|\mathscr{F}^{-1} (\phi(\e.)) \ast f\|_{\infty} \leq C \qquad \text{for all } \e \in (0,1]. 
  \end{align*}
  Now let $\beta \in \Non$ with $|\beta| \leq \tilde{m}$. We show, that for all $\tilde{\beta}, \alpha \in \Non$ with $|\alpha| \leq M$
  \begin{align}\label{6e}
    \|D^{\tilde{\beta}}_x \pa{\alpha} D_x^{\beta} a^{\sharp} (., \xi)\|_{\infty} \leq C_{\alpha, \tilde{\beta}, \beta} \<{\xi}^{m + \delta|\beta|-\rho|\alpha| + \gamma|\tilde{\beta}| } \qquad \text{for all } \xi \in \Rn.
  \end{align}
  This implies claim i). First of all we verify (\ref{6e}) for $\tilde{\beta} \in \Non$ with $|\tilde{\beta}| \leq \tilde{m}- |\beta|$. To this end we choose an arbitrary $\xi \in \Rn$ with $N:= \{j \in \N_0: \xi \in \supp \psi_j \}$ . Then $\sharp N \leq 5$. Using $a^{\sharp}(.,\xi) = \sum_{j \in N} J_{\e_j}a(.,\xi) \psi_{j}(\xi)$, the Leibniz rule, (\ref{7e}) and Lemma \ref{lemma:InterpolationResult} yields for $\theta:=\frac{|\tilde{\beta}|}{\tilde{m}+\tau-|\beta|}$
  \begin{align}\label{8ee}
    &\|D^{\tilde{\beta}}_x \pa{\alpha} D_x^{\beta} a^{\sharp} (., \xi)\|_{\infty} 
    \leq C_{\alpha} \sum_{j \in N} \sum_{\alpha_1+\alpha_2 = \alpha} \<{\xi}^{-\rho|\alpha_2|} \|\p_{\xi}^{\alpha_1} D_x^{\beta} a(., \xi)\|_{C^{|\tilde{\beta}|}_b(\Rn)} \nonumber\\
    &\qquad \qquad \leq  C_{\alpha}  \sum_{\alpha_1+\alpha_2 = \alpha} \<{\xi}^{-\rho|\alpha_2|} \|\p_{\xi}^{\alpha_1} D_x^{\beta} a(., \xi)\|^{1-\theta}_{C^{0}_b(\Rn)} \|\p_{\xi}^{\alpha_1} D_x^{\beta} a(., \xi)\|_{C^{\tilde{m}-|\tilde{\beta}|, \tau}(\Rn)}^{\theta} \nonumber\\
    &\qquad \qquad \leq  C_{\alpha, \tilde{\beta}, \beta} \<{\xi}^{m + \delta|\beta|-\rho|\alpha| + \gamma|\tilde{\beta}| },
  \end{align}
  where $C_{\alpha, \tilde{\beta}, \beta}$ is independent of $\xi \in \Rn$.
  Now let $\tilde{\beta} \in \Non$ with $|\tilde{\beta}| + |\beta|\geq \tilde{m}$. Using $a^{\sharp}(.,\xi) = \sum_{j \in N} J_{\e_j}a(.,\xi) \psi_{j}(\xi)$, the Leibniz rule and (\ref{7e}) again, we obtain
  \begin{align*}
    \|D^{\tilde{\beta}}_x \pa{\alpha} D_x^{\beta} a^{\sharp} (., \xi)\|_{\infty} 
    \leq C_{\alpha} \sum_{j \in N} \sum_{\alpha_1+\alpha_2 = \alpha} \<{\xi}^{-\rho|\alpha_2|} \| D^{\tilde{\beta}}_x J_{\e_j} \pa{\alpha_1} D_x^{\beta} a(., \xi) \|_{\infty}.
  \end{align*}
  Now we can prove  (\ref{6e}) by means of the previous inequality since $ \pa{\alpha_1} D_x^{\beta} a(., \xi) \in C^{\tilde{m} - |\beta|, \tau} (\Rn)$ using Lemma \ref{lemma:PropertiesOfJ_E} ii) and (\ref{8e}). It remains to prove claim ii). We again assume $\beta \in \Non$ with $|\beta| \leq \tilde{m}$. Moreover let $a \in C^{\tilde{m}, \tau} \dot{S}^m_{\rho, \delta}(\RnRn; M)$ or $|\beta| \neq 0$ and $a \in C^{\tilde{m}, \tau} \tilde{S}^m_{\rho, \delta}(\RnRn; M)$. 

  Similarly to the proof of (\ref{43e}) we will now show for $\alpha, \tilde{\beta} \in \Non$ with $|\alpha| \leq M$ and $|\tilde{\beta}| \leq \tilde{m}- |\beta|$:
  \begin{align}\label{9e}
    |D^{\tilde{\beta}}_x \pa{\alpha} D_x^{\beta} a^{\sharp} (., \xi)| \leq C_{\alpha, \beta, \tilde{\beta}}(x) \<{\xi}^{m + \delta|\beta|-\rho|\alpha| + \gamma|\tilde{\beta}| } \qquad \text{for all } x,\xi \in \Rn.
  \end{align}
  Here $ C_{\alpha, \beta, \tilde{\beta}}(x)$ is bounded and $ C_{\alpha, \beta, \tilde{\beta}}(x) \xrightarrow{|x| \rightarrow  \infty} 0$.
  In order to prove (\ref{9e}) for $\alpha, \tilde{\beta} \in \Non$ with $|\alpha| \leq M$ and $|\tilde{\beta}| +  |\beta| \geq \tilde{m}$ we choose an arbitrary but fixed $\xi \in \Rn$ and  define $N$ as before. Additionally  let $\e>0$ be arbitrary. Since $a \in C^{\tilde{m}, \tau} S^m_{\rho, \delta}(\RnRn; M) $ we get by means of the Leibniz rule and by (\ref{7e}) the existence of a constant $A_1>0$ with
  \begin{align}\label{11e}
    \sum_{j \in N} |\pa{\alpha} \left\{ D_x^{\beta} a(x,\xi) \psi_j(\xi) \right\}| \leq A_1 \<{\xi}^{m-\rho|\alpha| + \delta|\beta|}.
  \end{align}
Defining $g(\xi):= \xi^{\tilde{\beta}} \phi(\xi)$ for all $\xi \in \Rn$  we obtain for all $j \in \N$ and $f \in C^0_b(\Rn)$ due to the Theorem of Fubini: 
  \begin{align}\label{10e}
    \e^{|\tilde{\beta}|} D_x^{\tilde{\beta}} J_{\e_j}(D_x)f(x)
    = \intr \mathscr{F}^{-1}_{\xi \rightarrow x} \left[ g(\e_j \xi)\right](x-y)f(y)dy.
  \end{align}
  Since $\phi(\e_j \xi) \in \mathcal{S} (\Rn_{\xi})$, there is an $R>1$ such that for all $|y| \geq R-1$
  \begin{align}\label{12e}
    |\mathscr{F}^{-1}_{\xi \rightarrow x} \left[ g(\e_j \xi)\right](y) \<{y}^{n+1}| < \frac{\e}{2A_1A_2} \qquad \text{for all } j \in N,
  \end{align}
  where $A_2:= \int \<{y}^{-n-1} dy$. Moreover we get on account of the properties of the Fourier transformation, change of variable and due to $g \in \s$: 
  \begin{align}\label{13e}
    B_3:= \intr |\mathscr{F}^{-1}_{\xi \rightarrow x} \left[ g(\e_j \xi)\right](y) | dy = \intr |\mathscr{F}^{-1}[g](z)| dz < \infty
  \end{align}
  The choice of the symbol $a$ and of the multi-index $\beta$ gives us the existence of an $\tilde{R}>0$ such that for all $|x| \geq \tilde{R}+R-1$ and for all $y \in \overline{B_{R-1}(0)}$ we have
  \begin{align}\label{14e}
    \left| \sum_{j \in N} \pa{\alpha} \{ D_x^{\beta} a(x-y,\xi) \psi(\xi) \}\right| \leq \frac{\e}{2B_3} \<{\xi}^{m-\rho |\alpha| + \delta|\beta|} \qquad \text{for all } |\beta| \neq 0
  \end{align}

  Now let $\eta \in C^{\infty}_0(\Rn)$ with $\eta(x) \in [0,1]$ for all $x \in \Rn$, $\eta(x)=0$ for all $|x| \geq R$ and $\eta(x)=1$ for all $|x| \leq R-1$.
  By means of (\ref{11e}) and (\ref{12e}) we have
  \begin{align}\label{15e}
    B_1&:= \int_{\Rn \backslash \overline{B_{R-1}(0)}} | \mathscr{F}^{-1}_{\xi \rightarrow x} [g(\e_j \xi)](y)| |(1-\eta)(y)| |\sum_{j \in N} \pa{\alpha} [D_x^{\beta} a(x-y, \xi) \psi_j(\xi)]| dy \nonumber\\
    & \leq \frac{\e}{2} \<{\xi}^{m-\rho|\alpha| + \delta |\beta|}.
  \end{align}
  Additionally a combination of (\ref{13e}) and (\ref{14e}) yields
  \begin{align}\label{16e}
    B_2 &:= \int_{B_R(0)} |\mathscr{F}^{-1}_{\xi \rightarrow x} \left[ g(\e_j \xi)\right](y) | |\eta(y)| |\sum_{j \in N} \pa{\alpha} [D_x^{\beta} a(x-y, \xi) \psi_j(\xi)]|dy \notag\\
      &\leq \frac{\e}{2} \<{\xi}^{m-\rho|\alpha| + \delta |\beta|}.
  \end{align}
  Using $a^{\sharp}(.,\xi) = \sum_{j \in N} J_{\e_j}a(.,\xi) \psi_{j}(\xi)$, (\ref{10e}) and the definition of $\e_j$ first and (\ref{15e}),  (\ref{16e}) and (\ref{8e}) afterwards, we obtain
  \begin{align*}
    |D^{\tilde{\beta}}_x \pa{\alpha} D_x^{\beta} a^{\sharp} (x, \xi)|
    &= \e_j^{-|\tilde{\beta}|} \left| \e_j^{|\tilde{\beta}|} D_x^{\tilde{\beta}} J_{\e_j} \left\{ \sum_{j \in N} \pa{\alpha} \left[ D_x^{\beta} a(x,\xi) \psi_j(\xi)\right] \right\} \right|
    \leq 2^{j\gamma|\tilde{\beta}|} (B_1+B_2)\\
    &\leq 2^{j\gamma|\tilde{\beta}|}  \e C \<{\xi}^{m + \delta |\beta| -\rho|\alpha| }
    \leq \e C \<{\xi}^{m + \delta |\beta| -\rho|\alpha| + \gamma |\tilde{\beta}|}.
  \end{align*}
  Hence (\ref{9e}) also holds for $\alpha, \tilde{\beta} \in \Non$ with $|\alpha| \leq M$ and $|\tilde{\beta}| +  |\beta| \geq \tilde{m}$. This provides ii).
\end{proof}

\begin{lemma}\label{lemma:SymbolSmoothing1}
  Let $0 \leq \delta < \rho \leq 1$, $\tilde{m} \in \N$, $0<\tau <1$, $M \in \N \cup \{ \infty \}$, $m \in \R$ and $a \in C^{\tilde{m}, \tau} \tilde{S}^m_{\rho, \delta}(\RnRn; M)$ such that
  \begin{align*}
    a(x, \xi) \xrightarrow{|x| \rightarrow \infty} a(\infty, \xi) \qquad \text{for all } \xi \in \Rn.
  \end{align*}
  Moreover we set $b(x, \xi):= a(x, \xi)-a(\infty, \xi)$ for all $x, \xi \in \Rn$. Additionally we define $a^{\sharp}, a^b, a^{\sharp}(\infty, .)$ and $a^b(\infty, .)$ as in Definition \ref{Def:SymbolSmoothing}. Then we have for $\gamma \in (\delta, \rho)$ and $\tilde{\e} \in \left( 0, (\gamma- \delta)\tau \right)$: 
  \begin{itemize}
    \item[i)] $a^{\sharp}(\infty, \xi)= a(\infty, \xi) \in S^m_{\rho, \delta}(\RnRn; 0)$,
    \item[ii)] $a^b(\infty, \xi) = 0$ for all $\xi \in \Rn$,
    \item[iii)] $a^b(x, \xi) \in C^{\tilde{m}, \tau} \tilde{S}_{\rho, \gamma}^{m-(\gamma-\delta)(\tilde{m}+\tau)+\tilde{\e}}(\RnRn; M) 
		  \cap C^{\tilde{m}, \tau} \dot{S}_{\rho, \gamma}^{m-(\gamma-\delta)(\tilde{m}+\tau)+\tilde{\e}}(\RnRn; 0) $,
    \item[iv)] $a^{\sharp}(x, \xi) = a(\infty, \xi) + b^{\sharp}(x, \xi)$ for all $x, \xi \in \Rn$.
  \end{itemize}
\end{lemma}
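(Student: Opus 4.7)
The plan is to reduce the four assertions to Lemma \ref{lemma:SymbolSmoothing3} via the linear splitting $a(x,\xi) = a(\infty,\xi) + b(x,\xi)$, where $b := a - a(\infty,\cdot)$. The crucial input is that the smoothing operator $J_{\e_j} = \phi(\e_j D_x)$ leaves functions independent of $x$ invariant: the kernel $\e_j^{-n}\mathcal{F}^{-1}\phi(\cdot/\e_j)$ has total mass $\phi(0)=1$, so $J_{\e_j}[a(\infty,\xi)] = a(\infty,\xi)$ for every fixed $\xi$. Combined with the partition-of-unity property of the dyadic cut-offs $\psi_j$, this yields $[a(\infty,\cdot)]^{\sharp}(x,\xi) = a(\infty,\xi)$, and linearity of the $\sharp$-map then gives $a^{\sharp}(x,\xi) = a(\infty,\xi) + b^{\sharp}(x,\xi)$, i.e.\ assertion (iv).

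From (iv) the first two assertions follow at once. The inclusion $a(\infty,\cdot)\in S^m_{\rho,\delta}(\RnRn;0)$ requires only $|a(\infty,\xi)| \leq C\<{\xi}^m$, which is obtained by passing to the limit in $|a(x,\xi)| \leq C\<{\xi}^m$; since $a(\infty,\xi)$ is constant in $x$ and the class has $M=0$, no additional derivative bounds are needed. Moreover, the representation $J_{\e_j}b(\cdot,\xi)(x) = \int \mathcal{F}^{-1}\phi(y)\,b(x-\e_j y,\xi)\,dy$ together with dominated convergence (using $b(\cdot,\xi)\to 0$ pointwise and the uniform envelope $|b|\leq C\<{\xi}^m$) gives $b^{\sharp}(x,\xi)\to 0$ as $|x|\to\infty$. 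Combined with (iv) this produces $a^{\sharp}(\infty,\xi) = a(\infty,\xi)$, which is (i), and then $a^b(\infty,\xi) = a(\infty,\xi) - a^{\sharp}(\infty,\xi) = 0$, which is (ii).

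For (iii), I observe that (iv) yields $a^b = b - b^{\sharp} = b^b$, reducing the claim to an application of Lemma \ref{lemma:SymbolSmoothing3} to $b$. To use part (ii) of that lemma, one must verify $b \in C^{\tilde{m},\tau}\tilde{S}^m_{\rho,\delta}(\RnRn;M)$: for $|\beta|\neq 0$ the identity $D_x^{\beta}b = D_x^{\beta}a$ together with the slowly-varying hypothesis on $a$ provides the required $\dot{S}$-bound, while for the full symbol estimates on $b$ one needs $|\p_\xi^{\alpha}a(\infty,\xi)|\leq C\<{\xi}^{m-\rho|\alpha|}$. The latter is obtained by a Cauchy argument in $x$: since the first-order $x$-derivatives of $\p_\xi^{\alpha}a$ decay to zero as $|x|\to\infty$ (by slow variation), the family $\{\p_\xi^{\alpha}a(x,\xi)\}_x$ is Cauchy and its limit inherits the symbol bound. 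Part (iii) of Lemma \ref{lemma:SymbolSmoothing3} then delivers the second inclusion; for this one only needs $b \in C^{\tilde{m},\tau}\dot{S}^m_{\rho,\delta}(\RnRn;0)$, which is immediate since $M=0$ eliminates the $\xi$-derivative requirement and $b\to 0$ as $|x|\to\infty$ with $|b|\leq C\<{\xi}^m$ covers the $\beta=0$ bound.

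The main obstacle is establishing that the pointwise limit $a(\infty,\xi)$ carries enough structure, specifically that $\p_\xi^{\alpha}a(\infty,\xi)$ exists and satisfies the expected symbol bound, so that $b$ genuinely lies in $C^{\tilde{m},\tau}\tilde{S}^m_{\rho,\delta}(\RnRn;M)$ and Lemma \ref{lemma:SymbolSmoothing3}(ii) can be invoked. Once this technical point is settled, the remaining estimates on $b$ are inherited termwise from those of $a$ and the trivially-satisfied bounds on $a(\infty,\cdot)$, closing the proof.
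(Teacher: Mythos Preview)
Your overall plan---split $a=a(\infty,\cdot)+b$, exploit $J_{\e_j}[a(\infty,\xi)]=a(\infty,\xi)$ to get (iv), deduce (i) and (ii), and reduce (iii) to Lemma~\ref{lemma:SymbolSmoothing3} via $a^b=b^b$---is essentially the paper's argument. Two points deserve comment.

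First, a minor omission in (i): membership in $S^m_{\rho,\delta}(\RnRn;0)$ also requires continuity of $a(\infty,\xi)$ in $\xi$. The paper handles this by noting that the uniform bound $\|a(x,\cdot)\<{\cdot}^{-m}\|_{C^1_b(\Rn_\xi)}\le C$ (available since $M\ge1$) gives a $\xi$-H\"older estimate uniform in $x$, which survives the limit $|x|\to\infty$.

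Second, and more substantively, your route to the $\tilde S$-inclusion in (iii) has a gap. You want to apply Lemma~\ref{lemma:SymbolSmoothing3}(ii) to $b$, which forces you to show $b\in C^{\tilde m,\tau}\tilde S^m_{\rho,\delta}(\RnRn;M)$, hence $a(\infty,\cdot)\in C^M(\Rn_\xi)$ with the corresponding symbol bounds. Your proposed ``Cauchy argument''---the $x$-gradient of $\p_\xi^\alpha a$ decays, so the family is Cauchy---is not valid as stated: a bounded function whose gradient tends to zero need not converge (think of $\sin(\log\langle x\rangle)$). One can recover $a(\infty,\cdot)\in C^{M-1}$ by Arzel\`a--Ascoli and the uniqueness of the pointwise limit of $a(x,\cdot)$, but pushing this to the top order $|\alpha|=M$ is not clear from the hypotheses. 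The paper simply sidesteps this: the inclusion $a^b\in C^{\tilde m,\tau}\tilde S^{m-(\gamma-\delta)(\tilde m+\tau)+\tilde\e}_{\rho,\gamma}(\RnRn;M)$ is Lemma~\ref{lemma:SymbolSmoothing3}(ii) applied directly to $a$ (which is in $\tilde S$ by hypothesis), with no reference to $b$ at all. Only the $\dot S$-inclusion with $M=0$ uses $a^b=b^b$, and there it suffices to check $b\in C^{\tilde m,\tau}\dot S^m_{\rho,\delta}(\RnRn;0)$, for which no $\xi$-differentiability of $a(\infty,\cdot)$ is required. Adopting this split eliminates what you call the ``main obstacle'' entirely.
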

%%%%%%%%%%%%%%%%%%%%%%%%%%%%%%%%%%%%%
%% Lemma 1 vom Blatt 1 vom 27.02.2017
%%%%%%%%%%%%%%%%%%%%%%%%%%%%%%%%%%%%%
\begin{proof}
  First of all we verify claim $i)$. Since $a \in C^{\tilde{m}, \tau} \tilde{S}^m_{\rho, \delta}(\RnRn; M)$ we have 
  \begin{align}\label{20e}
    %\|a(x,\xi) \<{\xi}^{-m}\|_{C^0_b(\Rn_{\xi})} \leq C \quad \text{and} \quad 
    &\|a(x,\xi) \<{\xi}^{-m}\|_{C^{0, \tau}_b(\Rn_{\xi})} \leq \|a(x,\xi) \<{\xi}^{-m}\|_{C^1_b(\Rn_{\xi})} \leq C  \qquad \text{for all } x\in \Rn \text{ and} \notag \\
      &|a(x,\xi) \<{\xi}^{-m}| \leq C \qquad \text{for all } x,\xi \in \Rn.
  \end{align}
  Hence the definition of $C^{0, \tau}(\Rn)$ provides
  \begin{align}\label{ee1}
    |\<{\xi_1}^{-m} a(x, \xi_1)-\<{\xi_2}^{-m}a(x,\xi_2)| \leq C |\xi_1-\xi_2|^{\tau} \xrightarrow{\xi_1 \rightarrow \xi_2} 0,
  \end{align}
  where $C$ is independent of $x \in \Rn$. Taking $|x| \rightarrow \infty$ on both sides and using $\<{\xi}^{-m} \in C^{\infty}(\Rn)$ yields $ a(\infty, \xi) \in C^0(\Rn_{\xi})$.  Taking $|x| \rightarrow \infty$ on both sides of (\ref{20e}) provides 
  \begin{align*}
    |a(\infty, \xi) | \leq C \<{\xi}^{m} \qquad \text{for all } \xi \in \Rn.
  \end{align*}
Together with (\ref{ee1}) we therefore get
  \begin{align*}
    a(\infty ,\xi) \in S^m_{\rho, \delta}(\RnRn; 0).
  \end{align*}
  By means of Remark \ref{bem:PropOsciIntBasic} we can show for all $\xi \in \Rn$
  \begin{align*}
    J_{\e}a(\infty, \xi)
    = a(\infty, \xi)\cdot \osint e^{-iz \cdot \eta} \phi(\e \eta) dz \dq \eta 
    =a(\infty, \xi).
  \end{align*}
  Hence we obtain for all $\xi \in \Rn$
  \begin{align*}
    a^{\sharp}(\infty, \xi) = a(\infty, \xi) \qquad \text{ and }\qquad a^b(\infty, \xi)= a(\infty, \xi)-  a^{\sharp}(\infty, \xi) = 0.
  \end{align*}
  This provides $i)$, $ii)$ and $iv)$. It remains to verify claim $iii)$. On account of the definition of $a(\infty, \xi)$ and $a \in C^ {\tilde{m}, \tau} \tilde{S}^m_{\rho, \delta}(\RnRn; M)$ we have for all $\beta \in \Non$ with $|\beta| \leq \tilde{m}$
  \begin{align}\label{21e}
    |D_x^{\beta}b(x,\xi)| \leq C_{\beta}(x)\<{\xi}^{m+\delta|\beta|} \qquad \text{for all } \xi \in \Rn,
  \end{align}
  where $C_{\beta}(x) \rightarrow 0$ if $|x| \rightarrow \infty$. Moreover
  \begin{align*}
    \|a(\infty,\xi)\|_{C^{\tilde{m}, \tau}(\Rn_x)} \leq |a(\infty,\xi)| 
    = |\lim_{|x| \rightarrow \infty} a(x,\xi)| 
    \leq C \<{\xi}^{m} \qquad \text{for all } \xi \in \Rn,
  \end{align*}
  we get $ \|b(.,\xi)\|_{C^{\tilde{m}, \tau}(\Rn)} \leq \<{\xi}^{m + \delta \cdot(\tilde{m}+\tau)}$. Together with (\ref{21e}) this yields 
  \begin{align*}
    b(x, \xi) \in C^{\tilde{m}, \tau} \dot{S}^m_{\rho, \delta}(\RnRn; 0).
  \end{align*}
  Consequently Lemma \ref{lemma:SymbolSmoothing3} and $a^b(x, \xi)= b^b(x, \xi)$ provides claim $iii)$.
\end{proof}

\subsection{Symbol Reduction}\label{Subsection:SymbolReduction}

In this subsection we prove a formula representing an operator with a non-smooth double symbol as an operator with a non-smooth single symbol. Non-smooth double symbols are defined in the following way:

\begin{Def}
  Let $\tilde{m} \in \N_0$, $0 <\tau < 1$, $m_1, m_2 \in \R$, $0 \leq \delta, \rho \leq 1$ and $M_1, M_2 \in \N_0 \cup \{ \infty \}$. Then a continuous function $a: \R^n_x \times\R^n_{\xi} \times \R^n_{x'} \times\R^n_{\xi'} \rightarrow \C$ belongs to the non-smooth double symbol-class $C^{\tilde{m}, \tau} S^{m_1, m_2}_{\rho, \delta}(\RnRnRnRn; M_1, M_2)$ if
	\begin{itemize}
		\item[i)] $\pa{\alpha} \p^{\beta'}_{x'} \p_{\xi'}^{\alpha'} a \in C^{\tilde{m}, \tau}(\R^n_x)$ and $\p_x^{\beta} \pa{\alpha} \p^{\beta'}_{x'} \p_{\xi'}^{\alpha'} a \in C^{0}(\R^n_x \times \R^n_{\xi} \times \R^n_{x'} \times\R^n_{\xi'})$,
% 		\item[ii)] $\pa{\alpha}  \p^{\beta'}_{x'} \p_{\xi'}^{\alpha'} p(.,\xi, x', \xi') \in C^s(\R^n)$,
		\item[ii)] $\left| \p^{\beta}_{x} \pa{\alpha}  \p^{\beta'}_{x'} \p_{\xi'}^{\alpha'} a(x,\xi, x', \xi')  \right| \leq C_{\alpha,\beta, \beta', \alpha'}(x) \tilde{C}_{\alpha,\beta, \beta', \alpha'}(x') \<{\xi}^{m_1-\rho|\alpha|+ \delta|\beta|} \<{\xi'}^{m_2-\rho|\alpha'|} $ \\ $\hspace*{1mm}\qquad  \qquad \qquad \qquad \qquad \qquad \cdot\<{\xi; \xi'}^{\delta |\beta'|}$
		\item[iii)] $\| \pa{\alpha}  \p^{\beta'}_{x'} \p_{\xi'}^{\alpha'} a(.,\xi, x', \xi')  \|_{C^{\tilde{m}, \tau}(\R^n)} \leq C_{\alpha, \beta', \alpha'} \<{\xi}^{m_1-\rho|\alpha|+ \delta(\tilde{m}+\tau)} \<{\xi'}^{m_2-\rho|\alpha'|} \<{\xi; \xi'}^{\delta |\beta'|}$
	\end{itemize}
  for all $x,\xi, x', \xi' \in \Rn$ and arbitrary $\beta, \alpha, \beta', \alpha' \in \N_0^n$ with $|\beta| \leq \tilde{m}$,  $|\alpha| \leq M_1$ and $|\alpha'| \leq M_2$. Here the constants $C_{\alpha,\beta, \beta', \alpha'}(x), C_{\alpha, \beta', \alpha'}$ and $\tilde{C}_{\alpha,\beta, \beta', \alpha'}(x')$ are bounded and independent of $\xi, x', \xi' \in \Rn$ respectively $\xi, x, \xi' \in \Rn$.\\
  If we even have $C_{\alpha,\beta, \beta', \alpha'}(x) \xrightarrow{|x| \rightarrow \infty} 0$ for all $\beta, \alpha, \beta', \alpha' \in \N_0^n$ with $|\beta| \leq \tilde{m}$,  $|\alpha| \leq M_1$ and $|\alpha'| \leq M_2$, then $a$ is an element of  $C^{\tilde{m}, \tau} \dot{S}^{m_1, m_2}_{\rho, \delta}(\RnRnRnRn; M_1, M_2)$. If we have $\tilde{C}_{\alpha,\beta, \beta', \alpha'}(x')  \xrightarrow{|x'| \rightarrow \infty} 0$ $\beta, \alpha, \beta', \alpha' \in \N_0^n$ with $|\beta| \leq \tilde{m}$,  $|\alpha| \leq M_1$ and $|\alpha'| \leq M_2$ instead, then $a$ is an element of  $C^{\tilde{m}, \tau} \hat{S}^{m_1, m_2}_{\rho, \delta}(\RnRnRnRn; M_1, M_2)$.
\end{Def}

For each double symbol  $a \in C^{\tilde{m}, \tau} S^{m_1, m_2}_{\rho, \delta}(\RnRnRnRn; M_1, M_2)$ we define the associated pseudodifferential operator $P$ by
\begin{align*}
  P u(x):=\osiint e^{-i(y\cdot \xi+y'\cdot \xi')} a(x, \xi, x+y, \xi') u(x+y+y') dy dy' \dq \xi \dq \xi'
\end{align*}
for all $u \in \s$.

% In order to show the Fredholm property of non-smooth pseudodifferential operators, we will need some asymptotic expansion formula for the product of two non-smooth pseudodifferential operators. To this end some further double symbol-classes of non-smooth pseudodifferential operators are needed:

In the smooth case, i.e. if $M_1, M_2=\infty$, the symbol-reduction is well-known, cf. e.g. \cite[Lemma 2.4]{KumanoGo}. For non-smooth double symbols of the symbol-class $C^{\tilde{m}, \tau} S^{m,m'}_{\rho, \delta}(\RnRnRnRn; N)$ the symbol smoothing was proved in \;\cite[Theorem 3.33]{Koeppl} in the case $N=\infty$ and in \cite[Section 4.2]{Paper1} in the case $(\rho, \delta) = (0,0)$. As an ingredient for the proof of the Fredholm property of non-smooth pseudodifferential operators, we need the symbol reduction in a more general setting.

\begin{thm}\label{thm:aLInCsSm00n}
  Let $0 < s < 1$, $\tilde{m} \in \N_0$ and $m_1,m_2 \in \R$. Additionally we choose $N_1,N_2 \in \N_0 \cup \{ \infty \}$ such that there is an $l \in \N$ with $N_1 \geq l > n$. Moreover, we define $\tilde{N}:=\min\{ N_1-(n+1), N_2\}$. Furthermore, let $\mathscr{B} \subseteq C^{\tilde{m}, s} S^{m_1,m_2}_{\rho,\delta}(\RnRnRnRn; N_1, N_2)$ be bounded. If we define for each $a \in \mathscr{B}$ and $\theta \in [0,1]$ the function $a^{\theta}_L: \RnRn \rightarrow \C$ by
  \begin{align*}
    a^{\theta}_L(x,\xi) := \osint e^{-iy \cdot \eta} a(x, \theta \eta + \xi, x+y, \xi) dy \dq \eta \qquad \text{for all } x, \xi \in \Rn,
  \end{align*}
  we get with $m:= m_1+m_2$ that $a^{\theta}_L \in  C^{\tilde{m},s} S^{m}_{\rho,\delta}(\RnRn; \tilde{N}) $ for all $a \in \mathscr{B}$ and $\theta \in [0,1]$ and the existence of a constant $C_{\alpha}$, independent of $a \in \mathscr{B}$ and $\theta \in [0,1]$, such that  for all $\alpha, \beta \in \Non \text{ with } |\alpha| \leq \tilde{N}$ and $ |\beta| \leq \tilde{m}$
  \begin{align}\label{55e}
    \| \pa{\alpha} a^{\theta}_L(.,\xi) \|_{ C^{\tilde{m}, s} (\Rn) } \leq C_{\alpha} \<{\xi}^{m - \rho|\alpha| + \delta(\tilde{m}+s)} \qquad \forall \xi \in \Rn
  \end{align}
  and 
  \begin{align}\label{56e}
    | \pa{\alpha} \p^{\beta}_x a^{\theta}_L(x,\xi) | \leq C_{\alpha, \beta}(x) \<{\xi}^{m -\rho |\alpha|+ \delta|\beta|} \qquad \forall \xi \in \Rn,
  \end{align}
  where $C_{\alpha, \beta}(x)$ is bounded and  independent of $a \in \mathscr{B}$, $\xi \in \Rn$ and $\theta \in [0,1]$.
  This implies the boundedness of $\{ a^{\theta}_L: a \in \mathscr{B}, \theta \in [0,1] \} \subseteq  C^{\tilde{m},s} S^{m}_{\rho,\delta}(\RnRn; \tilde{N}) $. If $\mathscr{B}$ is even a bounded set in $C^{\tilde{m}, s} \dot{S}^{m_1,m_2}_{\rho,\delta}(\RnRnRnRn; N_1, N_2)$ or in $C^{\tilde{m}, s} \hat{S}^{m_1,m_2}_{\rho,\delta}(\RnRnRnRn; N_1, N_2)$, then $C_{\alpha, \beta}(x) \xrightarrow{|x| \rightarrow \infty} 0$. 
%   And in case $\mathscr{B}_2$ is bounded in $ C^{\tilde{m},s} \tilde{S}^{m_1,m_2}_{\rho,\delta}(\RnRnRnRn; N_1,N_2)$, we get the boundedness of $\{ a^{\theta}_L: a \in \mathscr{B}, \theta \in [0,1] \}$ in $C^{\tilde{m},s} \tilde{S}^{m}_{\rho,\delta}(\RnRn; \tilde{N}) $.
\end{thm}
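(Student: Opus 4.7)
The plan is to estimate the derivatives of $a^{\theta}_L$ by first differentiating under the oscillatory integral and then rewriting each resulting integral as an absolutely convergent one via Theorem~\ref{thm:propertiesOsciInt}. Since $\xi$ enters $a(x,\theta\eta+\xi,x+y,\xi)$ through the second slot (with chain-rule factor $\theta\leq 1$) and the fourth, while $x$ enters through slots one and three (the latter via $y$-translation), the chain rule yields
\begin{align*}
\pa{\alpha}\p_x^{\beta}a^{\theta}_L(x,\xi)=\sum_{\substack{\alpha_1+\alpha_2=\alpha\\\beta_1+\beta_2=\beta}}c_{\alpha_i\beta_i}\theta^{|\alpha_1|}\osint e^{-iy\cdot\eta}(\p_{\xi}^{\alpha_1}\p_{\xi'}^{\alpha_2}\p_x^{\beta_1}\p_{x'}^{\beta_2}a)(x,\theta\eta+\xi,x+y,\xi)\,dy\,\dq\eta,
\end{align*}
with combinatorial constants $c_{\alpha_i\beta_i}$ independent of $\theta$ and $a\in\mathscr{B}$.

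I then fix $l'=n+1$ (so $l'>n$, and $|\alpha|+l'\leq N_1$ is assured by $|\alpha|\leq\tilde{N}\leq N_1-(n+1)$) and choose $l\in\N$ sufficiently large (depending only on $|m_1|,\delta,\tilde{m},s$), and apply Theorem~\ref{thm:propertiesOsciInt} to rewrite each summand by inserting $A^{l'}(D_{\eta},y)A^{l}(D_y,\eta)$ into the integrand. The operator $A^{l'}$ produces a factor $\<{y}^{-l'}$ together with up to $l'$ additional $\xi$-derivatives of $a$ falling in slot two, while $A^{l}$ produces $\<{\eta}^{-l}$ together with up to $l$ $x'$-derivatives in slot three (which is smooth, so there is no restriction).

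To bound the integrand pointwise, I apply property (ii) of the symbol class and use Peetre's inequality to convert $\<{\theta\eta+\xi}^{\cdot}$ and $\<{\theta\eta+\xi;\xi}^{\cdot}$ factors into $\<{\xi}^{\cdot}\<{\eta}^{\cdot}$. Collecting powers of $\<{\eta}$ and choosing $l$ sufficiently large makes the $\eta$-integral converge and delivers the factor $\<{\xi}^{m-\rho|\alpha|+\delta|\beta|}$, while $\<{y}^{-l'}$ with $l'>n$ makes the $y$-integral converge uniformly in $x$. All constants depend only on the symbol-class bounds and are therefore uniform over $\mathscr{B}$ and $\theta\in[0,1]$. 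The Hölder estimate (\ref{55e}) follows by carrying out the same scheme inside the $C^{\tilde{m},s}(\Rn_x)$-norm using property (iii), which contributes the additional factor $\<{\xi}^{\delta(\tilde{m}+s)}$.

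The decay $C_{\alpha,\beta}(x)\to 0$ as $|x|\to\infty$ for the $\dot{S}$- and $\hat{S}$-classes is the main technical point. In the $\dot{S}$-case the defining constant $C_{\alpha_i,\beta_i}(x)$ does not depend on $(y,\eta)$ and factors out of the integral, so the claim is immediate. In the $\hat{S}$-case the relevant constant is $\tilde{C}_{\alpha_i,\beta_i}(x+y)$, uniformly bounded and tending pointwise to $0$ as $|x|\to\infty$ for each fixed $y$; dominated convergence with the integrable majorant $C\<{y}^{-l'}\<{\eta}^{-l+C_m}$ then yields the decay, and uniformity over $\mathscr{B}$ follows because the dominating constant depends only on the uniform symbol-class bounds on $\mathscr{B}$.
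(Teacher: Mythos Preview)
Your scheme works when $\delta=0$, but it fails to deliver the correct power of $\<{\xi}$ when $\delta>0$, and this is precisely the case the theorem must cover (it is later applied with $\delta=\gamma>0$ after symbol-smoothing). The problem is the step where you apply $A^{l}(D_y,\eta)$: each of the $l$ derivatives $D_{y_j}$ hits the third slot of $a$, and by property~(ii) of the double symbol class this contributes a factor $\<{\theta\eta+\xi;\xi}^{\delta}$. After Peetre, $\<{\theta\eta+\xi;\xi}^{\delta l}\leq \<{\xi}^{\delta l}\<{\eta}^{\delta l}$. The $\<{\eta}^{\delta l}$ is indeed absorbed by $\<{\eta}^{-l}$ (so the $\eta$-integral converges once $(1-\delta)l>n+|m_1|+\ldots$), but the factor $\<{\xi}^{\delta l}$ survives the integration. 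Since $l$ must be taken large, your final bound is $\<{\xi}^{m-\rho|\alpha|+\delta|\beta|+\delta l}$, not $\<{\xi}^{m-\rho|\alpha|+\delta|\beta|}$.

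This is exactly why the paper does not use the $\xi$-independent operators $A^{l'}(D_\eta,y)A^l(D_y,\eta)$ here. Instead it introduces (Remark~\ref{bem:OddSpaceDimensions} and the operators $B^{l_0}$) the $\xi$-dependent weight $(1+\<{\xi}^{2\delta}|y|^2)^{-l_0/2}$ together with $\<{\xi}^{\delta}$-scaled $\eta$-derivatives, and then, via Proposition~\ref{prop:OsziInt=Int_n} and Proposition~\ref{prop:HilfslemmaIntAbschatzung_n}, splits the $\eta$-integral into the three dyadic regions $|\eta|\leq\tfrac12\<{\xi}^{\delta}$, $\tfrac12\<{\xi}^{\delta}\leq|\eta|\leq\tfrac12\<{\xi}$, $|\eta|\geq\tfrac12\<{\xi}$. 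In the inner two regions one has $\<{\theta\eta+\xi;\xi}\leq C\<{\xi}$, and the subsequent integration-by-parts with $|\eta|^{-2l}(-\Delta_y)^l$ produces a compensating factor because $\int_{|\eta|\geq\frac12\<{\xi}^{\delta}}|\eta|^{-2l}\,d\eta\sim\<{\xi}^{(-2l+n)\delta}$, which cancels the unwanted $\<{\xi}^{2l\delta}$ exactly; the outer region is handled by $\<{\theta\eta+\xi;\xi}\leq C|\eta|$. Without this $\xi$-adapted decomposition the bookkeeping cannot close, so your argument as written has a genuine gap. The $\hat{S}$-decay part of your argument also relies on having an integrable majorant with the correct $\<{\xi}$-power, and hence inherits the same defect; in the paper this is handled by the substitution $w=\<{\xi}^{\delta}y$ inside the region-by-region estimates of Proposition~\ref{prop:HilfslemmaIntAbschatzung_n}.
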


We combine the ideas of the smooth symbol reduction in \cite[Lemma 2.4]{KumanoGo} and that one in  \cite[Section 4.2]{Paper1} in order to get the boundedness of $\{ a^{\theta}_L: a \in \mathscr{B}, \theta \in [0,1] \} \subseteq  C^{\tilde{m},s} S^{m}_{\rho,\delta}(\RnRn; \tilde{N}) $. To show $C_{\alpha, \beta}(x) \xrightarrow{|x| \rightarrow \infty} 0$ additionally  some new arguments are needed. Unfortunately one looses some regularity with respect to the second variable of the order $n+1$ in the proof. The ability to treat the even and odd space dimensions in the same way is based on the next remark:

\begin{bem}\label{bem:OddSpaceDimensions}
  Let $l \in \N$ be arbitrary. Then
%   we get on account of 
%   \begin{align*}
%       &\left( 1 + \<{\xi}^{2 \delta} |y|^2\right)^{(2l+1)/2} 
%       = \left( 1 + \<{\xi}^{2 \delta} |y|^2\right)^{l} \frac{1 + \<{\xi}^{2 \delta} |y|^2}{\left(1 + \<{\xi}^{2 \delta} |y|^2 \right)^{1/2}}\\
%       &\quad = \left( 1 + \<{\xi}^{2 \delta} |y|^2\right)^{l} \frac{1}{\left(1 + \<{\xi}^{2 \delta} |y|^2 \right)^{1/2}}
% 	  + \sum_{j=1}^n \left( 1 + \<{\xi}^{2 \delta} |y|^2\right)^{l} \frac{\<{\xi}^{2 \delta} y_j^2}{\left(1 + \<{\xi}^{2 \delta} |y|^2 \right)^{1/2}} 
%   \end{align*}
%   that 
%   \begin{align}\label{eq3n}
%      e^{iy\cdot \eta} &= \left( 1 + \<{\xi}^{2 \delta} |y|^2\right)^{-(2l+1)/2} \left( 1 + \<{\xi}^{2 \delta} |y|^2\right)^{(2l+1)/2}  e^{iy\cdot \eta} \notag\\
%     %= &\left( 1 + \<{\xi}^{2 \delta} |y|^2\right)^{-(l+1)} \left( 1 + \<{\xi}^{2 \delta} |y|^2\right)^{l}  e^{iy\cdot \eta} \notag\\
%       &+ \sum_{j=1}^n \left( 1 + \<{\xi}^{2 \delta} |y|^2\right)^{-(2l+1)/2} \frac{\<{\xi}^{ \delta} y_j}{\left(1 + \<{\xi}^{2 \delta} |y|^2 \right)^{1/2}} \left( 1 + \<{\xi}^{2 \delta} |y|^2\right)^{l} \<{\xi}^{ \delta} y_j  e^{iy\cdot \eta}
%   \end{align}
%   Using $y_j  e^{iy\cdot \eta} = D_{\eta_j} e^{iy\cdot \eta}$ and
%   \begin{align*}
%     \left( 1 + \<{\xi}^{2\delta} (-\Delta_{\eta}) \right) e^{iy\cdot \eta} = e^{iy\cdot \eta} + \<{\xi}^{2\delta} (-\Delta_{\eta})  e^{iy\cdot \eta} = (1 + \<{\xi}^{2\delta}|y|^2)
%   \end{align*}
%   we get form (\ref{eq3n}):
  \begin{align*} %\label{eq4n}
    e^{iy\cdot \eta} 
    =& \left\{ \left( 1 + \<{\xi}^{2 \delta} |y|^2\right)^{-(l+1)} \left( 1 + \<{\xi}^{2 \delta} (-\Delta_{\eta})\right)^{l} \right.\notag \\
      & \left. + \sum_{j=1}^n \left( 1 + \<{\xi}^{2 \delta} |y|^2\right)^{-(2l+1)/2} \frac{\<{\xi}^{ \delta} y_j}{\left(1 + \<{\xi}^{2 \delta} |y|^2 \right)^{1/2}} \left( 1 + \<{\xi}^{2 \delta} (-\Delta_{\eta})\right)^{l} \<{\xi}^{ \delta} D_{\eta_j}  \right\} e^{iy\cdot \eta}
  \end{align*}
  and we have for all $l_0 \in \N$, $\gamma \in \Non$ 
  \begin{align}\label{50e}
    |\p^{\gamma}_y \left( 1 + \<{\xi}^{2 \delta} |y|^2\right)^{-l_0}| \leq C_{l_0, \gamma} \<{\xi}^{\delta |\gamma|} \left( 1 + \<{\xi}^{2 \delta} |y|^2\right)^{-l_0} \qquad \forall y, \xi \in \Rn.
  \end{align}
  We additionally have for all $\gamma \in \Non$:
  \begin{align}\label{eq5n}
    \left| \p^{\gamma}_y \frac{\<{\xi}^{ \delta} y_j}{\left(1 + \<{\xi}^{2 \delta} |y|^2 \right)^{1/2}} \right|
%     &= \sum_{\gamma_1 + \gamma_2 = \gamma} C_{\gamma_1, \gamma_2} \left|\p^{\gamma_1}_y \{\<{\xi}^{ \delta} y_j \} \p^{\gamma_2}_y \{\left(1 + \<{\xi}^{2 \delta} |y|^2 \right)^{-1/2} \}\right| \notag\\
%     &\leq \sum_{\gamma_1 + \gamma_2 = \gamma} C_{\gamma_1, \gamma_2} \<{\xi}^{ \delta |\gamma_2|} B_{\gamma_1} \left(1 + \<{\xi}^{2 \delta} |y|^2 \right)^{-1/2}
%   \end{align}
%   where $B_{\gamma_1} = \<{\xi}^{ \delta}$ if $(\gamma_1)_j  =1$,  $B_{\gamma_1} = \<{\xi}^{ \delta} y_j$ if $|\gamma_1|= 0$ and $B_{\gamma_1} = 0$ else. Since 
%   \begin{align*}
%     \frac{|B_{\gamma_1}|}{\left(1 + \<{\xi}^{2 \delta} |y|^2 \right)^{-1/2}} \leq \<{\xi}^{ \delta} \quad &\text{ if }(\gamma_1)_j  =1,
%     \frac{|B_{\gamma_1}|}{\left(1 + \<{\xi}^{2 \delta} |y|^2 \right)^{-1/2}} \leq 1 \quad &\text{ else }
%   \end{align*}
%   we get by means of (\ref{eq5n}):
%   \begin{align}\label{eq6n}
%     \left| \p^{\gamma}_y \frac{\<{\xi}^{ \delta} y_j}{\left(1 + \<{\xi}^{2 \delta} |y|^2 \right)^{1/2}} \right| 
    \leq \<{\xi}^{ \delta |\gamma|}.
  \end{align}
\end{bem}

\begin{Def}
  Let $l \in \N$ be arbitrary. Then we define
  \begin{align*}
    B^{l}(y, \Delta_{\eta}):= \left( 1 + \<{\xi}^{2 \delta} |y|^2\right)^{-l/2}  \left( 1 + \<{\xi}^{2 \delta} (-\Delta_{\eta})\right)^{l/2}
  \end{align*}
  if $l$ is even and 
  \begin{align*}
    B^{l}(y, \Delta_{\eta})
    :=& \left( 1 + \<{\xi}^{2 \delta} |y|^2\right)^{-l/2-1/2} \left( 1 + \<{\xi}^{2 \delta} (-\Delta_{\eta})\right)^{(l-1)/2} \notag \\
      & + \sum_{j=1}^n \left( 1 + \<{\xi}^{2 \delta} |y|^2\right)^{-l/2} \frac{\<{\xi}^{ \delta} y_j}{\left(1 + \<{\xi}^{2 \delta} |y|^2 \right)^{1/2}} \left( 1 + \<{\xi}^{2 \delta} (-\Delta_{\eta})\right)^{(l-1)/2} \<{\xi}^{ \delta} D_{\eta_j} 
  \end{align*}
  else for all $y, \xi \in \Rn$.
\end{Def}

In order to improve the symbol reduction, we  need the next result:

\begin{prop}\label{prop:OsziInt=Int_n}
  Let $0 \leq \delta \leq \rho \leq 1$ with $\delta \neq 1$, $0< \tau < 1$, $\tilde{m} \in \N_0$ and $m_1,m_2 \in \R$. Additionally let $N_1,N_2 \in \N_0 \cup \{ \infty \}$ be such that there is an $l \in \N$ with $n < l \leq N_1$. Moreover, let $a \in C^{\tilde{m},\tau}S^{m_1,m_2}_{\rho,\delta}(\RnRnRnRn; N_1,N_2)$. Considering an $l_0 \in \N_0$ with $n< l_0 \leq N_1$, we define $r^{\theta}: \RnRnRnRn \rightarrow \C$ for all $\theta \in [0,1]$ by
  \begin{align*}
    r^{\theta}(x,\xi,y,\eta):= B^{l_0}(y, \Delta_{\eta}) a(x,\xi+\theta \eta,x+y,\xi)
  \end{align*}
  for all  $x, \xi, \eta, y \in \Rn$. Then we have $r^{\theta}(x,\xi,y, \eta) \in L^1(\Rn_y)$ for all $x,\xi, \eta \in \Rn$ and  $ \int e^{-iy\cdot \eta } r^{\theta}(x,\xi,y, \eta) dy \in L^1(\Rn_{\eta})$ for all $x,\xi \in \Rn$. Moreover  we obtain 
  \begin{align*}
    \osint e^{-iy\cdot \eta } r^{\theta}(x,\xi,y,\eta) dy \dq \eta = \int \left[ \int e^{-iy\cdot \eta } r^{\theta}(x,\xi,y,\eta) dy \right] \dq \eta.
  \end{align*}
\end{prop}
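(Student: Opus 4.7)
The guiding idea is that the differential operator $B^{l_0}(y,\Delta_\eta)$ is tailored so that applying it to $a(x,\xi+\theta\eta,x+y,\xi)$ produces $|y|$-decay of order $(1+\<{\xi}^{2\delta}|y|^2)^{-l_0/2}$, while the $\eta$-derivatives land on $a$ via the chain rule $\partial_\eta a(x,\xi+\theta\eta,\cdot,\cdot)=\theta\,\partial_\xi a(\cdots)$ without breaking the symbol estimates. This directly handles the first claim, and the second then requires one further integration by parts in $y$ to generate $\<{\eta}^{-2M}$, where the hypothesis $\delta<1$ is crucial for obtaining enough $\eta$-decay.

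For the first claim, I would expand $B^{l_0}$ via Remark \ref{bem:OddSpaceDimensions} and apply the chain rule to write $r^\theta$ as a finite sum of terms
\[
c_{\alpha,k}(y,\xi)\<{\xi}^{2\delta k}\,\theta^{|\alpha|}(\partial_\xi^\alpha a)(x,\xi+\theta\eta,x+y,\xi), \qquad |\alpha|\leq l_0,\ 2k\leq l_0,
\]
with $|c_{\alpha,k}(y,\xi)|\leq C(1+\<{\xi}^{2\delta}|y|^2)^{-l_0/2}$ by (\ref{50e}) and (\ref{eq5n}). Property (ii) of the double symbol class bounds $(\partial_\xi^\alpha a)(x,\xi+\theta\eta,x+y,\xi)$ by a bounded function of $x$ and $x+y$ times $\<{\xi+\theta\eta}^{m_1-\rho|\alpha|}\<{\xi}^{m_2}$. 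Since $l_0>n$, the change of variables $z=\<{\xi}^\delta y$ yields
\[
\|r^\theta(x,\xi,\cdot,\eta)\|_{L^1(\R^n_y)}\leq C(x,\xi)\<{\xi+\theta\eta}^{m_1}\int_{\R^n}(1+|z|^2)^{-l_0/2}\,dz <\infty,
\]
proving the first claim. For the second, integration by parts $M$ times in $y$ gives $\int e^{-iy\cdot\eta}r^\theta\,dy=\<{\eta}^{-2M}\int e^{-iy\cdot\eta}(1-\Delta_y)^M r^\theta\,dy$. The Leibniz expansion places $(1-\Delta_y)^M$ partly on the coefficients of $B^{l_0}$ (still bounded by $(1+\<{\xi}^{2\delta}|y|^2)^{-l_0/2}$ up to extra $\<{\xi}$-powers via (\ref{50e})) and partly on $a$ as $x'$-derivatives of order at most $2M$; by (ii) these contribute a factor $\<{\xi+\theta\eta;\xi}^{2\delta M}\leq C\<{\xi}^{2\delta M}\<{\eta}^{2\delta M}$. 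Combined with $\<{\xi+\theta\eta}^{m_1}\leq C\<{\xi}^{|m_1|}\<{\eta}^{\max(m_1,0)}$, this yields $|\int e^{-iy\cdot\eta}r^\theta\,dy|\leq C(x,\xi)\<{\eta}^{-2M(1-\delta)+\max(m_1,0)}$, which is in $L^1(\R^n_\eta)$ once $M$ is chosen so that $2M(1-\delta)-\max(m_1,0)>n$; this choice is possible precisely because $\delta<1$.

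For the equality of the oscillatory and iterated integrals, I would use the defining limit with cutoff $\chi_\e(y,\eta):=\chi(\e y,\e\eta)$. For each $\e>0$, $\chi_\e$ is Schwartz so the integrand lies in $L^1(\R^n\times\R^n)$ and Fubini applies, giving
\[
\iint\chi_\e(y,\eta)e^{-iy\cdot\eta}r^\theta\,dy\,\dq\eta=\int\left[\int\chi_\e(y,\eta)e^{-iy\cdot\eta}r^\theta\,dy\right]\dq\eta.
\]
Dominated convergence on the inner integral, with dominant $\|\chi\|_\infty\cdot|r^\theta|\in L^1(\R^n_y)$ from the first step, gives pointwise convergence of the inner integral to $\int e^{-iy\cdot\eta}r^\theta\,dy$ as $\e\to 0$. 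For the outer integral, one repeats the $M$-fold integration by parts applied to $\chi_\e r^\theta$; since $|\partial_y^\beta\chi_\e|\leq\|\partial^\beta\chi\|_\infty$ uniformly in $\e\in(0,1]$, the $\eta$-majorant from the second step applies uniformly in $\e$, and a second dominated convergence argument yields the desired identity. The main obstacle is the bookkeeping in the second step, where one must simultaneously preserve the $y$-decay (via (\ref{50e})) and extract enough $\eta$-decay; the condition $\delta<1$ is essential, because each $y$-derivative of $a$ contributes a factor $\<{\eta}^\delta$ through $\<{\xi;\xi+\theta\eta}^\delta$ that must be strictly outweighed by the $\<{\eta}^{-1}$ gained from integration by parts.
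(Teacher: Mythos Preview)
Your proposal is correct and follows essentially the same route as the paper: decay in $y$ comes from the weight $(1+\langle\xi\rangle^{2\delta}|y|^2)^{-l_0/2}$ built into $B^{l_0}$, decay in $\eta$ is obtained by a further integration by parts $\langle D_y\rangle^{2M}$ exploiting $\delta<1$, and the oscillatory--iterated identity is proved by inserting the cutoff, applying Fubini, and passing to the limit via two uses of dominated convergence with the uniform majorants just obtained. The only cosmetic differences are that the paper treats the even case $l_0\mapsto 2l_0$ first and defers the odd case to Remark~\ref{bem:OddSpaceDimensions}, and it writes the $\eta$-power as $\langle\eta\rangle^{|m_1|-2l(1-\delta)}$ rather than your $\langle\eta\rangle^{\max(m_1,0)-2M(1-\delta)}$.
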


\begin{proof} First of all we prove the claim for even $l_0$ and use $2l_0$ instead of $l_0$. 
%   Note that the oscillatory integral exists due to Remark \ref{bem:OsziIntVonaExistiert_n}. 
   Let $x, \xi \in \Rn$ be arbitrary. We define $m:=m_1+m_2$. For every $\tilde{\gamma} \in \Non$ we get due to the boundedness of $\mathscr{B}_2 \subseteq C^{\tilde{m},\tau} S^{m_1,m_2}_{\rho,\delta}(\RnRnRnRn; N_1,N_2)$, the Leibniz rule and  $\<{\xi + \theta \eta; \xi} \leq \<{\xi} \<{\eta}$ for $\tilde{l} \in \N_0$, $\tilde{l} \leq l_0$:
  \begin{align}\label{eq7n}
    &\left| \p_y^{\tilde{\gamma} } \left\{ [\<{\xi}^{2 \delta}(-\Delta_{\eta})]^{\tilde{l}}) a(x,\xi+\theta \eta,x+y,\xi) \right\} \right| %\notag\\
%     &\qquad \leq  \sum_{|\alpha| = \tilde{l}} C_{\alpha}\<{\xi}^{2 \tilde{l} \delta } \left| \p_y^{ \tilde{\gamma} } D_{\eta}^{2 \alpha} a(x,\xi+\theta \eta,x+y,\xi) \right| \notag\\
%     &\qquad \leq \sum_{|\alpha| = \tilde{l}} C_{\alpha}\<{\xi}^{2 \tilde{l} \delta} \<{\xi + \theta \eta}^{m_1-2 |\alpha| \rho} \<{\xi}^{m_2} \<{\xi + \theta \eta; \xi}^{ \delta |\tilde{\gamma}|} \notag\\
%     &\qquad \leq C_{ \tilde{l}, \tilde{\gamma} } \<{\xi + \theta \eta}^{m_1} \<{\xi}^{m_2 + \delta |\tilde{\gamma}| +2\tilde{l}\delta}  \<{\eta}^{ \delta |\tilde{\gamma}|}
%     \leq C_{ \tilde{l}, \tilde{\gamma} }  \<{\theta \eta}^{|m_1|} \<{\xi}^{m + \delta |\tilde{\gamma}|+2\tilde{l}\delta } \<{\eta}^{ \delta |\tilde{\gamma}|}\notag \\
    \leq C_{ \tilde{l}, \tilde{\gamma} }  \<{\eta}^{|m_1| + \delta |\tilde{\gamma}|} \<{\xi}^{m + \delta |\tilde{\gamma}|+2\tilde{l}\delta }
  \end{align}
  for all $y,\eta \in \Rn$, where $C_{\tilde{l},\tilde{\gamma} }$ is independent of $x,y,\xi,\eta \in \Rn$, $\theta \in [0,1]$ and $a \in \mathscr{B}_2$. Now the Leibniz rule provides for all $l \in \N_0$ by means of (\ref{eq7n}) and (\ref{50e}) the existence of a $C_l>0$, independent of $x,y,\xi,\eta \in \Rn$, $\theta \in [0,1]$ and $a \in \mathscr{B}$, such that
  \begin{align*}
%     |\p_y^{\gamma} r(x,\xi,\eta,y)| \leq C_{\gamma} \<{y}^{-2l_0} \<{\xi + \eta}^m \leq C_{\gamma} \<{y}^{-2l_0} \<{\xi}^{m} \<{\eta}^{|m|}  
    |\<{\eta}^{-2l}\<{D_y}^{2l} r^{\theta}(x,\xi,y,\eta)| &\leq C_{l} \<{\eta}^{-2l} \left( 1 + \<{\xi}^{2 \delta} |y|^2\right)^{-l_0} \<{\eta}^{|m_1| + 2l \delta} \<{\xi}^{m + 2l\delta+ 2{l}_0\delta }\\
    &\leq C_{l} \<{y}^{-2l_0} \<{\eta}^{|m_1|-2l(1-\delta)} \<{\xi}^{m + 2l\delta +2l_0 \delta}
  \end{align*}
  for all $\xi,\eta \in \Rn$. \\ 
  Assuming an arbitrary $\chi \in \s$ with $\chi(0)=1$, we get for fixed $x,\eta, \xi \in \Rn$:
  \begin{align}\label{p61}
    e^{-iy\cdot \eta } \chi(\e y) r^{\theta}(x,\xi,y,\eta) \xrightarrow[]{\e \rightarrow 0} e^{-iy\cdot \eta } r^{\theta}(x,\xi,y,\eta) \quad \text{pointwise for all } y \in \Rn.
  \end{align}
  Now let $0 < \e \leq 1$. Using the Leibniz rule and $\chi \in \s \subseteq C^{\infty}_b(\Rn)$ we have
  \begin{align}
    | \<{\eta}^{-2l'} \<{D_y}^{2l'} [\chi(\e y) r^{\theta}(x,\xi,y,\eta)]| &\leq  C_{l} \<{y}^{-2l_0} \<{\eta}^{|m_1|-2l'(1-\delta)} \<{\xi}^{m + 2l'\delta +2l_0 \delta }, \label{p63}
  \end{align}
  for all  $l' \in \N_0$ uniformly in $x,\xi, \eta, y \in \Rn$,  $a \in \mathscr{B}_2$ and in $0 < \e \leq 1$. 
%   Inequality (\ref{p63}) implies $\<{D_y}^{2l'} \chi(\e y) r^{\theta}(x,\xi,y,\eta) \in L^1(\Rn_y)$ for every fixed $x,\xi,\eta \in \Rn$, $\theta \in [0,1]$, $ \e \in (0, 1]$ and for all $l' \in \N_0$. 
  Integration by parts yields for arbitrary $\ell \in \N_0$ with $|m_1|-2\ell(1-\delta) < -n$:
  \begin{align}\label{p64}
    \int e^{-iy\cdot \eta } \chi(\e y) r^{\theta}(x,\xi,y,\eta) dy  = \int e^{-iy\cdot \eta } \<{\eta}^{-2\ell} \<{D_y}^{2\ell} [\chi(\e y) r^{\theta}(x,\xi,y,\eta)] dy 
  \end{align}
  Using $\chi \in \s \subseteq C^{\infty}_b(\Rn) $ and (\ref{p64}) first and (\ref{p63}) afterwards provides for fixed $x,\xi \in \Rn$:
  \begin{align}\label{p65}
    \left| \chi(\e \eta) \hspace{-0.5mm} \int \hspace{-0.5mm} e^{-iy\cdot \eta } \chi(\e y) r^{\theta}(x,\xi,y,\eta) dy \right| 
%     \leq C \hspace{-1mm} \int \hspace{-0.5mm} \left|  e^{-iy\cdot \eta } \<{\eta}^{-2l} \<{D_y}^{2l} [\chi(\e y) r^{\theta}(x,\xi,y,\eta)] \right| \hspace{-0.5mm} dy \notag \\
%     &\qquad \leq C_{l,m,\xi}  \<{\eta}^{|m_1|-2l(1-\delta)} \int C_{l} \<{y}^{-2l_0} dy 
    \leq C_{l,m,\xi}  \<{\eta}^{|m_1|-2\ell(1-\delta)} \in L^1(\Rn_{\eta}).
  \end{align}
  Here the constant $C_{\ell,m,\xi}$ is independent of $\e \in (0, 1]$,  $a \in \mathscr{B}_2$ and $x \in \Rn$. Setting $l'=0$ in (\ref{p63}) we obtain for each fixed $x,\xi,\eta \in \Rn$, that 
  $$\{ y \mapsto \chi(\e y) r^{\theta}(x,\xi,y,\eta): 0 < \e \leq 1 \}$$ 
  has a $L^1(\Rn_y)$-majorant. Together with (\ref{p61}) and (\ref{p65}) we have verified all assumptions of Lebesgue's theorem. An application of Lebesgue's theorem two times provides
%   \begin{align}\label{p66}
%     \chi(\e \eta) \int e^{-iy\cdot \eta } \chi(\e y) r^{\theta}(x,\xi,y,\eta) dy \xrightarrow[]{\e \rightarrow 0} \int e^{-iy\cdot \eta } r^{\theta}(x,\xi,y,\eta) dy
%   \end{align}
%   for all $x,\xi,\eta \in \Rn$. Thus it only remains to apply Lebesgue's theorem again in order to get for all $x,\xi \in \Rn$:
  \begin{align*}
    \osint e^{-iy\cdot \eta } r^{\theta}(x,\xi,y,\eta) dy \dq \eta 
    %&= \lim_{\e \rightarrow 0} \int \chi(\e \eta) \int e^{-iy\cdot \eta } \chi(\e y) r^{\theta}(x,\xi,y,\eta) dy \dq \eta \\
    = \int \left[ \int e^{-iy\cdot \eta } r^{\theta}(x,\xi,y,\eta) dy  \right]\dq \eta.
  \end{align*}
%   Note  that all assumptions of Lebesgue's theorem are fulfilled because of (\ref{p65}) and (\ref{p66}).\\
  If $l_0$ is odd, we can prove the claim in the same way, using 
%   Remark \ref{bem:UnterschZwUngeraderUndGeraderRaumdimNichtNotig} and 
  Remark \ref{bem:OddSpaceDimensions}.\\
\end{proof}

\begin{prop}\label{prop:HilfslemmaIntAbschatzung_n}
  Let $0< \delta < 1$, $m_1,m_2 \in \R$, $u \geq 0$ and $\theta \in [0,1]$. Additionally let $X$ be a Banach space. Considering $ l_0 \in \N_0$ with $-l_0 < -n$, we choose a set $\mathscr{B}$ of functions $r: \RnRnRnRn \rightarrow \C$ such that the next inequality holds for all $l \in \N_0$:
  \begin{align*}
    \| (-\Delta_y)^{l} r^{\theta}(.,\xi,y,\eta) \|_X \leq &C_{l}(x)\tilde{C}_{l}(x+y) \left( 1 + \<{\xi}^{2 \delta} |y|^2\right)^{-l_0/2} \<{\xi + \theta \eta}^{m_1} \<{\xi}^{m_2} \\
    & \cdot \<{\xi + \theta \eta; \xi}^{ 2l\delta+u}.
  \end{align*}
  Here the constants $C_l(x), \tilde{C}_{l}(x+y)$ are bounded and independent of $\xi,\eta \in \Rn$, ${\theta} \in [0,1]$ and of $r \in \mathscr{B}$. If we denote the sets $\Omega_1:= \{ \eta \in \Rn : |\eta| \leq \frac{1}{2}\<{\xi}^{\delta} \}$, $\Omega_2:= \{ \eta \in \Rn : \frac{1}{2}\<{\xi}^{\delta} \leq |\eta| \leq \frac{1}{2} \<{\xi} \}$ and $\Omega_3:= \{ \eta \in \Rn : |\eta| \geq \frac{1}{2} \<{\xi}\}$ first and define 
  \begin{align*}
    I^{\theta}_i(x) := \int \limits_{\Omega_i} \intr e^{-iy\cdot \eta } r^{\theta}(x,\xi,y,\eta) dy \dq \eta \qquad \text{for } i \in \{ 1,2,3 \}
  \end{align*}
  for arbitrary $x, \xi \in \Rn$ afterwards, then there is a constant $C(x)$, bounded and independent of $\xi \in \Rn$, ${\theta} \in [0,1]$ and $r \in \mathscr{B}$, such that
  \begin{align}\label{eq13n}
    \left\| I^{\theta}_i \right\|_{X} \leq C(x) \<{\xi}^m \qquad \text{for } i \in \{ 1,2,3 \}.
  \end{align}
  where $m:=m_1+m_2+u$. If $X= \R$ and $C_l(x) \xrightarrow{|x| \rightarrow \infty} 0$ or $\tilde{C}_l(x+y) \xrightarrow{|x+y| \rightarrow \infty} 0$ for all $l \in \N$, then $C(x) \xrightarrow{|x| \rightarrow \infty} 0$. 
\end{prop}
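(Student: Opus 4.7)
My plan is to split the outer $\eta$-integral at the natural scales $\langle\xi\rangle^\delta$ and $\langle\xi\rangle$ and handle each of the three resulting regions separately. On $\Omega_2$ and $\Omega_3$ I will integrate by parts in $y$ via the identity $e^{-iy\cdot\eta}=(1+|\eta|^2)^{-l}(1-\Delta_y)^l e^{-iy\cdot\eta}$ with $l$ large enough to ensure convergence, thereby gaining a factor $(1+|\eta|^2)^{-l}\le\langle\eta\rangle^{-2l}$ and invoking the hypothesis for $(-\Delta_y)^k r^\theta$, $0\le k\le l$. In every region the $y$-integral of $(1+\langle\xi\rangle^{2\delta}|y|^2)^{-l_0/2}$ contributes a factor $C\langle\xi\rangle^{-\delta n}$ (via the substitution $z=\langle\xi\rangle^\delta y$ together with $l_0>n$), while $\tilde{C}_l(x+y)$ is treated as bounded. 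On $\Omega_1=\{|\eta|\le\langle\xi\rangle^\delta/2\}$ I take $l=0$; both $\langle\xi+\theta\eta\rangle$ and $\langle\xi+\theta\eta;\xi\rangle$ are then $\le C\langle\xi\rangle$, yielding an inner-integral bound $C(x)\langle\xi\rangle^{m-\delta n}$, and multiplication by $|\Omega_1|\le C\langle\xi\rangle^{\delta n}$ produces $\langle\xi\rangle^m$.

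On $\Omega_2=\{\langle\xi\rangle^\delta/2\le|\eta|\le\langle\xi\rangle/2\}$ the same weight bounds hold, so for $l>n/2$ the inner integral is at most $C(x)\langle\eta\rangle^{-2l}\langle\xi\rangle^{m+2l\delta-\delta n}$, and the radial integral $\int_{\langle\xi\rangle^\delta/2}^{\langle\xi\rangle/2}r^{n-1-2l}\,dr\le C\langle\xi\rangle^{\delta(n-2l)}$ exactly cancels the $\langle\xi\rangle^{2l\delta}$ excess, giving $\langle\xi\rangle^m$. The essential region is $\Omega_3=\{|\eta|\ge\langle\xi\rangle/2\}$: here $\langle\xi\rangle\le C\langle\eta\rangle$, and the direct, additive bound yields $\langle\xi+\theta\eta\rangle,\,\langle\xi+\theta\eta;\xi\rangle\le C\langle\eta\rangle$. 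Using $\langle\xi+\theta\eta\rangle^{m_1}\le C\langle\eta\rangle^{\max(m_1,0)}$, the inner integral is bounded by $C(x)\langle\xi\rangle^{m_2-\delta n}\langle\eta\rangle^{\max(m_1,0)+u-2l(1-\delta)}$, and choosing $l\ge\tfrac{n}{2}+\tfrac{\max(-m_1,0)}{2(1-\delta)}$ (possible since $\delta<1$) makes the $\eta$-integral over $\Omega_3$ at most $C\langle\xi\rangle^{\max(m_1,0)+u-2l(1-\delta)+n}$, so the total exponent of $\langle\xi\rangle$ reduces to $m_1+m_2+u=m$. The main obstacle is this $\Omega_3$ estimate: employing the multiplicative Peetre bound $\langle\xi+\theta\eta;\xi\rangle\le C\langle\xi\rangle\langle\eta\rangle$ would leave a non-absorbable $\langle\xi\rangle^{2l\delta+u}$, and only the additive bound $\langle\xi+\theta\eta;\xi\rangle\le C\langle\eta\rangle$ available on $\Omega_3$ allows the exponents to close.

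To obtain $C(x)\to 0$ in the case $X=\R$: if $C_l(x)\to 0$, each of the three bounds carries $C_l(x)$ as a prefactor and the conclusion is immediate. If instead $\tilde{C}_l(x+y)\to 0$ as $|x+y|\to\infty$, I need to show that $\langle\xi\rangle^{\delta n}\int\tilde{C}_l(x+y)(1+\langle\xi\rangle^{2\delta}|y|^2)^{-l_0/2}\,dy\to 0$ as $|x|\to\infty$ uniformly in $\xi$. After the substitution $z=\langle\xi\rangle^\delta y$ this becomes $\int\tilde{C}_l(x+\langle\xi\rangle^{-\delta}z)(1+|z|^2)^{-l_0/2}\,dz$, which I would split into $|z|\le|x|/2$, where $|x+\langle\xi\rangle^{-\delta}z|\ge|x|/2$ forces $\tilde{C}_l(x+\langle\xi\rangle^{-\delta}z)\le\sup_{|w|\ge|x|/2}\tilde{C}_l(w)\to 0$ uniformly in $\xi$, and $|z|>|x|/2$, whose $L^1$-tail is $O(|x|^{n-l_0})\to 0$ since $l_0>n$.
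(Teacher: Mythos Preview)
Your argument is correct and follows essentially the same route as the paper's proof: the same three-region split in $\eta$, integration by parts in $y$ on $\Omega_2,\Omega_3$, the substitution $z=\langle\xi\rangle^{\delta}y$ for the $y$-integral, and—crucially—the additive bound $\langle\xi+\theta\eta;\xi\rangle\le C|\eta|$ on $\Omega_3$ rather than Peetre's multiplicative bound; your decay argument splitting at $|z|=|x|/2$ is a clean variant of the paper's $\varepsilon$-$R$ argument. One small quantitative slip: on $\Omega_3$ the stated threshold $l\ge n/2+\max(-m_1,0)/(2(1-\delta))$ guarantees the final $\langle\xi\rangle$-exponent is at most $m$, but it does not by itself ensure that the $\eta$-integral $\int_{\Omega_3}\langle\eta\rangle^{\max(m_1,0)+u-2l(1-\delta)}\,d\eta$ converges—you also need $2l(1-\delta)>n+\max(m_1,0)+u$, which is of course easily arranged since $\delta<1$ and then yields an even smaller exponent.
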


\begin{proof}
  First of all we prove the claim for even $l_0$ and use $2l_0$ instead of $l_0$. 
  Let $\xi \in \Rn$. The assumptions and $ \<{\xi+ \theta \eta ; \xi} \leq \<{\xi} \<{\eta}$ give us the existence of bounded constants $C_l(x)$, $\tilde{C}_l(x+y)$, independent of $\xi,\eta \in \Rn$ and $r \in \mathscr{B}$, such that
  \begin{align}\label{p25n}
%     |\p_y^{\gamma} r(x,\xi,\eta,y)| \leq C_{\gamma} \<{y}^{-2l_0} \<{\xi + \eta}^m \leq C_{\gamma} \<{y}^{-2l_0} \<{\xi}^{m} \<{\eta}^{|m|}  
   &\| (-\Delta_y)^{l} r^{\theta}(.,\xi,y,\eta) \|_X \nonumber \\
    &\qquad \qquad \leq C_{l}(x) \tilde{C}_l(x+y)\left( 1 + \<{\xi}^{2 \delta} |y|^2\right)^{-l_0} \<{\xi + \theta \eta}^{m_1} \<{\xi}^{m_2 }\<{\xi + \theta \eta; \xi}^{ 2l\delta + u} \nonumber\\
%     & \leq C_{l}(x) \tilde{C}_l(x+y) \left( 1 +  |y|^2\right)^{-l_0} \<{\xi}^{m_1} \<{\theta \eta}^{|m_1|} \<{\xi}^{m_2} \<{\eta}^{ 2l\delta +u} \<{\xi}^{ 2l\delta+ u} \nonumber \\
     & \qquad \qquad \leq C_{l}(x) \tilde{C}_l(x+y) \<{y}^{-2l_0} \<{\xi}^{m + 2l \delta + u} \<{\eta}^{|m_1|+ 2l \delta + u} \in L^1(\Rn_y) 
  \end{align}
  for all $\xi,\eta \in \Rn, \theta \in [0,1], l \in \N_0$. 
%   $|\eta| \leq \frac{1}{2} \<{\xi}$ implies $|\theta \eta| \leq \frac{1}{2} \<{\xi}$ for all $ \theta \in [0,1]$ we can simplify (\ref{p25n}) for all $\eta \in \Omega_1 \cup \Omega_2$:
    For all $\eta \in \Omega_1 \cup \Omega_2$ and $m \in \R$ the estimates
%   implies $|\eta| \leq \frac{1}{2} \<{\xi}$ and hence $|\theta \eta| \leq \frac{1}{2} \<{\xi}$. Therefore we get together with Remark \ref{bem:Absch<x+y>_n} 
  $\<{\xi +\theta \eta}^{m_1} \leq C_{m_1} \<{\xi}^{m_1}$ and $\<{\xi + \theta \eta; \xi}^{ 2l\delta +u} \leq C \<{\xi}^{2l \delta+u}$ hold. Now let $m :=m_1+m_2+u$. Then we can simplify (\ref{p25n}) for all $\eta \in \Omega_1 \cup \Omega_2$ to 
  \begin{align}\label{p30n}
    \|(-\Delta_y)^{l} r^{\theta}(.,\xi,y,\eta) \|_{X} \leq  C_{l}(x)\tilde{C}_l(x+y) \left( 1 + \<{\xi}^{2 \delta} |y|^2\right)^{-l_0}  \<{\xi}^{m + 2l\delta}
  \end{align}
  for all $\xi,y \in \Rn,  l \in \N_0$, where $C_l(x), \tilde{C}_l(x+y)$ are bounded and independent of $\theta \in [0,1]$, $\xi,\eta \in \Rn$ and $r \in \mathscr{B}$. In order to estimate $\|I_1\|_{X}$, we also need the following calculation, which can be verified by means of the change of variables $\tilde{\eta}:= \<{\xi}^{-\delta}\eta$:
%   \begin{align*}
%     \int \limits_{|\eta| \leq 0.5} \dq \eta \leq \prod_{j=1}^n \int \limits_{|\eta_j| \leq 0.5} \dq \eta_j = (2 \pi)^{-n}.
%   \end{align*}
%   By means of the Substitution $\tilde{\eta}:= \<{\xi}^{-\delta}\eta$, which provides $\dq \tilde{\eta} =\<{\xi}^{-\delta n} \dq\eta$, we get due to the previous estimate:
  \begin{align}\label{p26n}
    \int \limits_{|\eta| \leq 0.5 \<{\xi}^{\delta} } \dq \eta= \<{\xi}^{\delta n} \int \limits_{|\tilde{\eta}| \leq 0.5  } \dq \eta \leq C_n \<{\xi}^{\delta n } .
  \end{align}
  Thus a combination of (\ref{p30n}) and (\ref{p26n}) 
%   and an application of Theorem \ref{thm:<x>^sInL^1} 
  concludes together with a substition $w:= \<{\xi}^{\delta}y$:
%   $(\rightsquigarrow dw = \<{\xi}^{\delta}dy)$: 
  \begin{align*}
    \| I_1^{\theta} \|_{X} 
%     &\leq \int \limits_{\Omega_1} \intr  \left\|e^{-iy\cdot \eta } r^{\theta}(.,\xi,y,\eta) \right\|_{X} dy \dq \eta \\
%     &\leq C_1(x) \<{\xi}^{m} \int \limits_{\Omega_1} \intr \tilde{C}_0(x+y)\left( 1 + \<{\xi}^{2 \delta} |y|^2\right)^{-l_0} dy \dq \eta \\
    &\leq C_1(x) \<{\xi}^{m-\delta n} \int \limits_{\Omega_1} \intr \tilde{C}_0(x+\<{\xi}^{-\delta}w)\left( 1 +  |w|^2\right)^{-l_0} dw \dq \eta 
%     &\leq C_1(x) \<{\xi}^{m-\delta n} \int \limits_{\Omega_1} \intr \<{w}^{-l_0} dw \dq \eta 
%     \leq C_1(x) \<{\xi}^{m-\delta n} \int \limits_{\Omega_1} \dq \eta 
    \leq C_1(x) \<{\xi}^m,
  \end{align*}
  where $C_1(x)$ is bounded and independent of $\xi \in \Rn$ and $r \in \mathscr{B}$. For the estimate of $\|I_2\|_{X}$ and $\|I_3\|_{X}$ we choose $l \in \N_0$ with $-2l < -n$. Together with the equation
  $e^{-iy \cdot \eta} = |\eta|^{-2l}(-\Delta_y)^{l} e^{-iy \cdot \eta}$ we obtain by integration by parts:
  \begin{align}\label{p27n}
    \intr e^{-iy \cdot \eta} r^{\theta}(x,\xi,y,\eta) dy = |\eta|^{-2l} \intr e^{-iy \cdot \eta} (-\Delta_y)^{l} r^{\theta}(x,\xi,y,\eta) dy.
  \end{align}
%   Note, that integration by parts is possible, since $(-\Delta_y)^l r^{\theta}(x,\xi,y,\eta) \in L^1(\Rn_y)$ due to (\ref{p25n}) and Theorem \ref{thm:<x>^sInL^1} for each $x, \xi, \eta \in \Rn$, $\eta \neq 0$ and $l \in \N_0$. 
%   Additionally we get for every $j \in \{ 1,\ldots, n \}$:
%   \begin{align*}
%     \int \limits_{\R} \left( | \eta_j | + \frac{1}{2} \<{\xi}^{\delta}\right)^{- \frac{2l}{n} } \dq \eta_j = 2 \int \limits_0^{\infty} \left( \eta_j  + \frac{1}{2}\<{\xi}^{\delta} \right)^{- \frac{2l}{n} } \dq \eta_j 
%     = \pi^{-1} \frac{1}{\frac{2l}{n}-1} \left( \frac{1}{2}\<{\xi}^{\delta} \right)^{-\frac{2l}{n}+1},
%   \end{align*}
%   since $-2l/n+1 < 0$ implies $\left( \eta_j  + 0.5 \<{\xi}^{\delta} \right)^{-2l/n+1} \rightarrow 0$ if $\eta_j$ converges against $ \infty$. 
%   Applying Remark \ref{bem:Inequality|eta|^(-2l)_n} gives us  
  Additonally we have 
  \begin{align}\label{p28n}
    \int \limits_{|\eta| \geq 0.5\<{\xi}^{\delta}} |\eta|^{-2l} \dq \eta= C_n \left|\int\limits_{0.5 \<{\xi}^{\delta}}^{\infty} r^{n-1-2l}d r \right| = C_{n,l} \<{\xi}^{(-2l+n)\delta}.
  \end{align}
%   Due to $|\eta|^{-2l} \leq 2^{2l} \prod_{j=1}^n \left( | \eta_j | + \frac{1}{2}\<{\xi}^{\delta} \right)^{-2l/n}$ we have 
%   \begin{align}\label{p28n}
%     \int \limits_{|\eta| \geq 0.5\<{\xi}^{\delta}} |\eta|^{-2l} \dq \eta \leq 2^{2l} \prod_{j=1}^n \int \limits_{\R} \left( | \eta_j | + \frac{1}{2}\<{\xi}^{\delta} \right)^{- \frac{2l}{n} } \dq \eta_j \leq C_l \<{\xi}^{(-2l+n)\delta}.
%   \end{align}
%   With all these results, we can estimate $|I^{\theta}_2|$, now: 
  If we utilize (\ref{p27n}) and (\ref{p30n}) first, and (\ref{p28n}) afterwards, we obtain
  \begin{align*}
    \|I^{\theta}_2\|_{X} 
%     &= \left\| \int \limits_{\Omega_2} |\eta|^{-2l} \intr e^{-iy \cdot \eta} (-\Delta_y)^{l} r^{\theta}(.,\xi,y,\eta) dy \dq \eta \right\|_{X} \\
%     &\leq \int \limits_{\Omega_2} |\eta|^{-2l} \intr \left\| (-\Delta_y)^{l} r^{\theta}(.,\xi,y,\eta) \right\|_{X} dy \dq \eta \\
%     &\leq C_{2}(x) \<{\xi}^{m+2l \delta} \int \limits_{\Omega_2} |\eta|^{-2l} \intr \tilde{C}_l(x+y) \left( 1 + \<{\xi}^{2 \delta} |y|^2\right)^{-l_0}  dy \dq \eta \\
    &\leq C_{2}(x) \<{\xi}^{m+2l \delta -\delta n } \int \limits_{\Omega_2} |\eta|^{-2l} \intr \tilde{C}_l(x+\<{\xi}^{-\delta}w) \left( 1 + |w|^2\right)^{-l_0}  dw \dq \eta 
%      &\leq C_{2}(x) \<{\xi}^{m+2l \delta -\delta n } \int \limits_{\Omega_2} |\eta|^{-2l} \intr \<{ w}^{-l_0}  dw \dq \eta \\
%     &\leq C_{2}(x) \<{\xi}^{m+2l \delta  - \delta n} \int \limits_{\Omega_2} |\eta|^{-2l} \dq \eta 
    \leq C_{2,l}(x) \<{\xi}^m,
  \end{align*}
  where $C_{2,l}(x)$ is bounded and independent of $\xi \in \Rn$, $\theta \in [0,1]$ and $r \in \mathscr{B}$. It remains to estimate $\|I^{\theta}_3\|_{X}$. 
%   Because of $|\eta| \geq \frac{1}{2} \<{\xi}$ 
  For each $\eta \in \Omega_3$, we have 
  $\<{\xi + \theta \eta} \leq \<{\xi} + |\theta \eta| \leq 3 |\eta|$ and $\<{\xi + \theta \eta; \xi} \leq \sqrt{13} |\eta|$.
%   by Remark \ref{bem:Abschätzung<x+y>}. 
%   Considering the case $k \geq 0$ we get $\<{\xi + \theta \eta}^k \leq C_k |\eta|^k$. Additional in the case $k <0$ the inequality $\<{\xi + \theta \eta}^k \leq 1$ is valid. 
  Denoting $k_+:= \max \{0,k\}$ and $k_{-}:=\min\{0,k\}$ this provides together with (\ref{p25n})  the existence of some constants $C_l(x), \tilde{C}_l(x+y)$, bounded and independent of $\xi \in \Rn$, $\eta \in \Omega_3$, $\theta \in [0,1]$ and $r \in \mathscr{B}$, such that
  \begin{align}\label{p31n}
    |\eta|^{-2l}\|(-\Delta_y)^l r^{\theta}(.,\xi,y,\eta)\|_{X} \leq &C_{l}(x) \tilde{C}_l(x+y) \left( 1 + \<{\xi}^{2 \delta} |y|^2\right)^{-l_0} |\eta|^{(m_1)_+ +u - 2l(1-\delta) } \notag\\
    &\quad \cdot \<{\xi}^{m_2} 
  \end{align}
  for all $\xi,y \in \Rn$ and $\eta \in \Omega_3$.
%   Additional we get for every $j \in \{ 1,\cdots, n \}$:
%   \begin{align*}
%     \intr \left( | \eta_j | + \frac{1}{2}\<{\xi} \right)^{(m_+ - 2l)/n} \dq \eta_j &= 2 \int \limits_0^{\infty} \left( \eta_j  + \frac{1}{2}\<{\xi} \right)^{(m_+ - 2l)/n} \dq \eta_j \\
%     &= -\pi^{-1} \frac{n}{m_+ -2l+n} \left( \frac{\<{\xi}}{2} \right)^{(m_+ -2l)/n + 1 },
%   \end{align*}
%   since $(m_+ -2l)/n + 1 < 0$ implies $\left( \eta_j  + 0.5 \<{\xi} \right)^{(m_+ - 2l)/(n+1)} \rightarrow 0$ as $\eta_j \rightarrow \infty$. Applying Remark \ref{bem:Inequality|eta|^(-2l)} gives us 
  Analog to the calculation of (\ref{p28n}) we get
%   an application of Remark \ref{bem:Inequality|eta|^(-2l)_n} gives us
  \begin{align}\label{p29}
    \int \limits_{\Omega_3} |\eta|^{(m_1)_+ +u - 2l(1-\delta)} \dq \eta \leq C \<{\xi}^{(m_1)_+ - 2l(1-\delta) + n -m_1} \<{\xi}^{m_1+u+ \delta n} \leq C \<{\xi}^{m_1+u+\delta n},
  \end{align}
  if we choose an $l \in \N_0$ with $-(m_1)_{-} +u - 2l(1-\delta) \leq -n$.
%   , since $$(m_1)_+  - 2l(1-\delta) + n -m_1<|m_1|+u- 2l(1-\delta)+ n \leq 0.$$ 
  Finally a combination of (\ref{p27n}), (\ref{p31n}) and (\ref{p29}) concludes similarly to the calculation of $\|I^{\theta}_2\|_{X} $:
  \begin{align*}
    \|I^{\theta}_3\|_{X} 
%     &= \left\| \int \limits_{\Omega_3} |\eta|^{-2l} \intr e^{-iy \cdot \eta} (-\Delta_y)^{l} r^{\theta}(.,\xi,y,\eta) dy \dq \eta \right\|_{X} \\
%     &\leq \int \limits_{\Omega_3} |\eta|^{-2l} \intr \left\| (-\Delta_y)^{l} r^{\theta}(.,\xi,y,\eta) \right\|_{X} dy \dq \eta \\
%     &\leq C_3(x) \<{\xi}^{m_2} \int \limits_{\Omega_3} |\eta|^{(m_1)_+ +u - 2l(1-\delta) } \intr \tilde{C}_l(x+y) \left( 1 + \<{\xi}^{2 \delta} |y|^2\right)^{-l_0}  dy \dq \eta\\
%     &\leq C_3(x) \<{\xi}^{m_2- \delta n} \int \limits_{\Omega_3} |\eta|^{(m_1)_+ +u - 2l(1-\delta) } \intr \tilde{C}_l(x+\<{\xi}^{-\delta}w) \left( 1 + |w|^2\right)^{-l_0}  dw \dq \eta
%     &\leq C_3(x)\<{\xi}^{m_2 - \delta n} \int \limits_{\Omega_3} |\eta|^{(m_1)_+ +u- 2l(1-\delta) } \dq \eta 
    \leq  C_3(x) \<{\xi}^m.
  \end{align*}
  Here $C_3(x)$ is bounded and independent of $\xi \in \Rn$, $\theta \in [0,1]$ and $r \in \mathscr{B}$. 
If $X= \R$ and $C_l(x) \xrightarrow{|x| \rightarrow \infty} 0$ for all $l \in \N$, we get by verifying the proof, that $C(x) \xrightarrow{|x| \rightarrow \infty} 0$. \\
  Now assume, that $X=\R$ and that for all $l \in \N_0$ we have $\tilde{C}_l(x+y) \xrightarrow{|x+y| \rightarrow  \infty} 0$ and $\tilde{C}_l(x+y) \leq B_l$ for all $x,y \in \Rn$. In order to verify that $C(x) \xrightarrow{|x| \rightarrow \infty} 0$ in estimate (\ref{eq13n}), we choose an arbitrary $l \in \N_0$ and $\varepsilon >0$. Additionally let  $\tilde{\varepsilon}>0$ with $-l_0<-l_0 + \tilde{\varepsilon} <-n$ be arbitrary but fixed. Defining
%   \begin{align*}
    $A:= \intr \<{w}^{-l_0 + \tilde{\varepsilon}} dw$
%   \end{align*}
  we obtain due to  $\<{w}^{-\tilde{\varepsilon}} \in \sindo{w}$ the existence of a $R>0$ such that 
  \begin{align}\label{eq14n}
    \<{w}^{-\tilde{\varepsilon}} \leq \frac{\varepsilon}{2A B_l} \qquad \text{ for all } w \in \Rn \backslash B_R(0).
  \end{align}
  Since $\tilde{C}_l(x+y)\xrightarrow{|x+y| \rightarrow \infty}  0$, there is a $\tilde{R}>0$ such that
  \begin{align}\label{eq15n}
      \tilde{C}_l(x+y) \leq \frac{\varepsilon}{2A} \qquad \text{for all } x,y \in \Rn \text{ with } |x+y|\geq \tilde{R}.
  \end{align}
%   Note, that for all $y \in B_R(0)$ and all $|x| \geq \tilde{R}+R$ we have 
%   $$|x+\<{\xi}^{-\delta}y| \geq |x|-\<{\xi}^{-\delta}|y| \geq |x|-|y| \geq \tilde{R}.$$
  Using (\ref{eq14n}) and (\ref{eq15n}) we obtain for all $x \in \Rn$ with $|x|\geq \tilde{R}+R$:
  \begin{align*}
    \intr \tilde{C_l}(x+ \<{\xi}^{-\delta}w) \<{w}^{-l_0} dw
    &= \int_{\Rn \backslash B_R(0)} \tilde{C_l}(x+ \<{\xi}^{-\delta}w) \<{w}^{- \tilde{\varepsilon}} \<{w}^{-l_0 + \tilde{\varepsilon}} dw \\
    &\qquad  + \int_{B_R(0)} \tilde{C_l}(x+ \<{\xi}^{-\delta}w) \<{w}^{- \tilde{\varepsilon}} \<{w}^{-l_0+ \tilde{\varepsilon}} dw 
%     &\leq  \int_{\Rn \backslash B_R(0)} \frac{\varepsilon}{2A} \<{w}^{-l_0 + \tilde{\varepsilon}} dw +  \int_{B_R(0)} \frac{\varepsilon}{2A} \<{w}^{-l_0+ \tilde{\varepsilon}} dw \\
%     &\leq \frac{\varepsilon}{A} \int_{\Rn} \<{w}^{-l_0 + \tilde{\varepsilon}} dw = 
    \leq \varepsilon.
  \end{align*}
  Using the previous estimate while verifying the norm-estimates of $\|I^{\theta}_i\|_{\R}$ for all $i \in \{1,2,3\}$ we obtain  $C(x) \xrightarrow{|x| \rightarrow \infty} 0$ in inequality (\ref{eq13n}).
  \\
  If $l_0$ is odd, we can proof the claim in the same way, using 
%   Remark \ref{bem:UnterschZwUngeraderUndGeraderRaumdimNichtNotig} and 
  Remark \ref{bem:OddSpaceDimensions}.
\end{proof}

The previous results enable us to show Theorem \ref{thm:aLInCsSm00n}, now:

\begin{proof}[Proof of Theorem \ref{thm:aLInCsSm00n}]
  We prove the claim in several steps: First we verify (\ref{56e}) in the case $|\beta|=0$. Then we show (\ref{55e}) in the case $|\beta|=0$ and $\pa{\alpha}D_x^{\beta} a_L^{\theta} \in C^0(\RnRn)$. Afterwards on can use the cases $|\beta|=0$ in order to verify (\ref{56e}) and (\ref{55e}) in the general case, which concludes the theorem. We obtain all those results by means of Proposition \ref{prop:OsziInt=Int_n} and Proposition \ref{prop:HilfslemmaIntAbschatzung_n}, which are modifications of the proofs of Proposition 4.8 and Proposition 4.6 in \cite{Paper1}. To this end we need to modify the analogous results of \cite[Section 4.2]{Paper1} as already done in the proofs of  Proposition \ref{prop:OsziInt=Int_n} and Proposition \ref{prop:HilfslemmaIntAbschatzung_n}. Note, that the generalized properties of the oscillatory integrals of Subsection \ref{subsection:SpaceOfAmplitudes} are needed for the proofs. The details are left to the reader.  
\end{proof}

%%%%%%%%%%%%%%%%%%%%%%%%%%%%%%%%%%%
%% Grundlagen, die für den Beweis der Fredholm-Eigenschaft benötigt werden - Ende
%%%%%%%%%%%%%%%%%%%%%%%%%%%%%%%%%%%%

%\input{Non-SmoothFredholmProperty}

\section{Fredholm Property of Non-Smooth Pseudodifferential Operators}\label{FredholmProperty}

%siehe Blatt 1 vom 22.03.2017
% \begin{Def}
%   Let $\tilde{m} \in \N_0$, $0<\tau <1$,$m\in \R$ $0\leq \delta <\rho \leq 1$ and $M \in \N_0 \cup\{\infty\}$.
%   Then $a(x,\xi) \in  C^{\tilde{m},\tau} S^{m}_{\rho,\delta}(\RnRn;M)$ belongs to the symbol-class $ C^{\tilde{m},\tau}\dot{S}^{m}_{\rho,\delta}(\RnRn;M)$, if for all $\alpha, \beta \in \Non$ with $|\alpha| \leq M$ and $|\beta| \leq \tilde{m}$ we have
%   \begin{itemize}
%     \item $|\pa{\alpha}D_x^{\beta}a(x,\xi)| \leq C_{\alpha, \beta}(x) \<{\xi}^{m-\rho|\alpha|}$ for all $x,\xi \in \Rn$,
%     \item $\|\pa{\alpha}a(.,\xi)\|_{C^{\tilde{m},\tau}} \leq C_{\alpha}\<{\xi}^{m-\rho|\alpha| + \delta(\tilde{m}+\tau)}$ for all $\xi \in \Rn$
%   \end{itemize}
%   with bounded functions $C_{\alpha,\beta}(x)$ fulfilling the additional property
%   \begin{align*}
%     C_{\alpha, \beta} \xrightarrow{|x|\rightarrow \infty} 0.
%   \end{align*}
%   In case $\tilde{m}>1$ a symbol $a(x,\xi) \in  C^{\tilde{m},\tau} S^{m}_{\rho,\delta}(\RnRn;M)$ belongs to the symbol-class  $ C^{\tilde{m},\tau}\tilde{S}^{m}_{\rho,\delta}(\RnRn;M)$, if for all $\beta \in \Non$ with $0 \neq |\beta| \leq \tilde{m}$ we have
%   \begin{align*}
%     D_x^{\beta}a(x,\xi) \in C^{\tilde{m}-|\beta|,\tau}\dot{S}^{m+\delta|\beta|}_{\rho,\delta}(\RnRn;M).
%   \end{align*}
%   Each element of $ C^{\tilde{m},\tau}\tilde{S}^{m}_{\rho,\delta}(\RnRn;M)$ is called \textbf{slowly varying} symbol.
% 
% \end{Def}

The present section serves to show the main goal of this paper: The Fredholm property of non-smooth pseudodifferential operators fulfilling certain properties. For the proof of that statement we use  the following properties of non-smooth pseudodifferential operators verified by Marschall:

\begin{lemma}\label{lemma:MarschallCompactness_H2}
    Let $m\in \R$, $0\leq \delta \leq \rho \leq 1$, $M \in \N_0 \cup\{\infty\}$ with $M>\frac{n}{2}$. Moreover let $\tilde{m} \in \N_0$ and $0<\tau <1$ be such that $\tilde{m}+\tau > \frac{1-\rho}{1-\delta}\cdot \frac{n}{2}$ in case $\rho<1$. Additionally let $a \in C^{\tilde{m}, \tau}S^m_{\rho,\delta}(\RnRn; M)$ be such that 
    \begin{align*}
      \lim_{|x| + |\xi| \rightarrow \infty} (1+|\xi|)^{-m} a(x,\xi)=0.
    \end{align*}
    Then for $(1-\rho)\frac{n}{2} - (1-\delta)(\tilde{m}+\tau)<s < \tilde{m}+ \tau$
    \begin{align*}
      a(x, D_x): H^{s+m}_2(\Rn) \rightarrow H^s_2(\Rn) \qquad \text{is compact.}
    \end{align*}
\end{lemma}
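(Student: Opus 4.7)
The plan is to approximate $a$ by smooth symbols whose associated operators are compact, and to show the approximation converges in operator norm so that $a(x,D_x)$ inherits compactness as a norm limit of compact operators. Concretely, I set $a_\e(x,\xi) := J_\e a(x,\xi)$, where $J_\e = \phi(\e D_x)$ is the mollifier in the $x$-variable from Subsection~\ref{Subsection:SymbolSmoothing}. Because $J_\e$ commutes with $\pa{\alpha}$ by (\ref{30e}) and $J_\e$ is uniformly bounded on $L^\infty(\Rn)$ (as used in the proof of Lemma~\ref{lemma:SymbolSmoothing2}), the $\xi$-estimates of $a$ transfer to $a_\e$, so $a_\e \in S^m_{\rho,\delta}(\RnRn;M)$ is smooth in $x$. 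Moreover, writing $\<{\xi}^{-m} a_\e(\cdot,\xi)$ as the convolution of $\<{\xi}^{-m}a(\cdot,\xi)$ with the $L^1$-kernel of $J_\e$ and using a dominated-convergence argument, the joint vanishing $\<{\xi}^{-m}a(x,\xi) \to 0$ as $|x|+|\xi|\to\infty$ is inherited by $a_\e$.

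Next I would control the error $a-a_\e = (1-J_\e)a$. By Lemma~\ref{lemma:PropertiesOfJ_E}~iii) applied in the spatial variable (with $|\beta|=0$, regularity parameter $\tau$, and loss $t=\tau-\tau'$), together with (\ref{30e}) to handle each $\pa{\alpha}$-derivative, one obtains that $(1-J_\e)a$ lies in $C^{\tau'}S^m_{\rho,\delta}(\RnRn;M)$ with symbol semi-norms bounded by $C\e^{\tau-\tau'}$ for any $\tau'\in (0,\tau)$. Since $s<\tilde{m}+\tau$, one can pick $\tau'<\tau$ close enough to $\tau$ so that $s<\tilde{m}+\tau'$ and the lower bound $s>(1-\rho)n/2-(1-\delta)(\tilde{m}+\tau')$ still hold; then Theorem~\ref{thm:BoundednessResultNonSmooth} yields
\begin{align*}
  \|(a-a_\e)(x,D_x)\|_{\mathscr{L}(H^{s+m}_2(\Rn),H^s_2(\Rn))} \leq C\,\e^{\tau-\tau'} \xrightarrow{\e\to 0} 0.
\end{align*}

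For the main term, since $a_\e \in S^m_{\rho,\delta}(\RnRn;M)$ with $M>n/2$ is smooth in $x$ and satisfies $\<{\xi}^{-m}a_\e(x,\xi)\to 0$ as $|x|+|\xi|\to\infty$, the classical compactness theorem for smooth pseudodifferential operators with symbols vanishing jointly at infinity (proved by approximation in symbol norm through compactly supported symbols, whose operators are Hilbert--Schmidt on $L^2$) gives that $a_\e(x,D_x):H^{s+m}_2(\Rn)\to H^s_2(\Rn)$ is compact. Being the operator-norm limit of compact operators, $a(x,D_x):H^{s+m}_2(\Rn)\to H^s_2(\Rn)$ is itself compact.

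The main obstacle I anticipate is a careful bookkeeping in the second step: one must verify that the $\e^{\tau-\tau'}$ scaling is uniform in $\alpha,\beta$ (i.e., carries over to all mixed derivatives $\pa{\alpha}\p_x^\beta$ after using (\ref{30e})), and simultaneously that $\tau'$ can be chosen so that \emph{both} sides of the $s$-range inequality in Theorem~\ref{thm:BoundednessResultNonSmooth} remain satisfied. The transfer of the joint vanishing at infinity from $a$ to $a_\e$ in Step~3, though conceptually clear, also needs a short splitting of the convolution into a piece near $x$ and a tail, to handle the limit $|x|+|\xi|\to\infty$ uniformly in the remaining variable.
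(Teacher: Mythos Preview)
The paper does not give a proof of this lemma at all: immediately after stating Lemmas~\ref{lemma:MarschallCompactness_H2} and~\ref{lemma:MarschallCompactness_Hp} it simply records that they ``are special cases of Theorem~3 and Theorem~4 of~\cite{MarschallOnTheBoundednessAndCompactness}.'' So there is no argument in the paper to compare against; your proposal is an independent attempt to supply one.

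Your strategy is reasonable in outline, but Step~3 contains a genuine gap. You assert that for the $x$-smoothed symbol $a_\e\in S^m_{\rho,\delta}(\RnRn;M)$ the compactness is ``classical,'' obtained by approximating $a_\e$ in symbol norm by compactly supported symbols. However, the hypothesis only gives vanishing of $\langle\xi\rangle^{-m}a_\e(x,\xi)$ itself, not of its derivatives; a cutoff $\chi_R(x)\psi_R(\xi)a_\e$ therefore does \emph{not} converge to $a_\e$ in any seminorm involving $\pa{\alpha}\p_x^\beta$ with $|\alpha|+|\beta|\ge 1$, and so operator-norm convergence is not immediate. Closing this requires an interpolation between the small $L^\infty$-bound and the uniform higher seminorms, together with the fact that only $M>n/2$ $\xi$-derivatives are available --- and that package is precisely the content of Marschall's compactness theorem. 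In other words, the $x$-mollification does not remove the real difficulty: your Step~3 presupposes a result of essentially the same strength as the lemma itself.

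There is also a smaller bookkeeping issue in Step~2 when $\delta>0$. Lemma~\ref{lemma:PropertiesOfJ_E}~iii) yields
\[
\|\pa{\alpha}(1-J_\e)a(\cdot,\xi)\|_{C^{\tilde m+\tau'}}\le C\,\e^{\tau-\tau'}\,\langle\xi\rangle^{m-\rho|\alpha|+\delta(\tilde m+\tau)},
\]
which carries an extra factor $\langle\xi\rangle^{\delta(\tau-\tau')}$ relative to condition~iv) of the class $C^{\tilde m,\tau'}S^m_{\rho,\delta}$. This can be repaired (e.g.\ by passing to a slightly larger $\delta'>\delta$ or by interpolating against the $L^\infty$-estimate of Lemma~\ref{lemma:PropertiesOfJ_E}~iv)), but it needs to be said, and in the borderline case $\delta=\rho$ permitted by the lemma it is not obvious that the fix goes through.
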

%%%%%%%%%%%%%%%%%%%%%%%%%%%%%%%%%%%%%%%%%
%% nach Paper \cite{MarschallOnTheBoundednessAndCompactness}, p. 235 gilt: C^rS^m_{\rho, \delta}(\RnRn; N) \subseteq SB^m_{\rho, \delta}(r, \infty, N, \infty), damit ist obiges und unteres Lemma ein Spezialfall der jeweiligen Theoreme von Marschall
%%%%%%%%%%%%%%%%%%%%%%%%%%%%%%%%%%%%%%%%%
\begin{lemma}\label{lemma:MarschallCompactness_Hp}
  Let $m\in \R$, $0\leq \delta \leq 1$, $1 \leq p < \infty$, $\tilde{m} \in \N_0$ and $0<\tau <1$. Moreover let $M \in \N \cup\{\infty\}$ with $M>n\cdot \max\left\{ \frac{1}{2}, \frac{1}{p} \right\}$. Additionally let $a \in C^{\tilde{m}, \tau}S^m_{1,\delta}(\RnRn; M)$ be such that 
    \begin{align*}
      \lim_{|x| + |\xi| \rightarrow \infty} (1+|\xi|)^{-m} a(x,\xi)=0.
    \end{align*}
    Then for $- (1-\delta)(\tilde{m}+\tau)<s < \tilde{m}+ \tau$
    \begin{align*}
      a(x, D_x): H^{s+m}_p(\Rn) \rightarrow H^s_p(\Rn) \qquad \text{is compact.}
     \end{align*}
\end{lemma}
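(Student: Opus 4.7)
The plan is to reduce to the smooth case by the symbol-smoothing of Subsection~\ref{Subsection:SymbolSmoothing} and then prove compactness by a cut-off approximation together with Rellich--Kondrachov. First I would fix some $\gamma\in(\delta,1)$ and decompose $a=a^{\sharp}+a^{b}$ as in Definition~\ref{Def:SymbolSmoothing}. Lemma~\ref{lemma:SymbolSmoothing2} yields $a^{\sharp}\in S^{m}_{1,\gamma}(\RnRn;M)$, a classical Hörmander symbol, and Lemma~\ref{lemma:SymbolSmoothing3} gives $a^{b}\in C^{\tilde m,\tau}S^{m-(1-\delta)(\tilde m+\tau)}_{1,\gamma}(\RnRn;M)$, a non-smooth symbol of strictly lower order. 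Both summands still satisfy $\<{\xi}^{-m}a^{\sharp}(x,\xi)\to 0$ and $\<{\xi}^{-m+(1-\delta)(\tilde m+\tau)}a^{b}(x,\xi)\to 0$ as $|x|+|\xi|\to\infty$: for $a^{\sharp}$ this follows from the fact that $J_{\varepsilon_j}$ is a convolution in $x$ and only finitely many $\psi_j$ are nonzero at any given $\xi$, and for $a^{b}=a-a^{\sharp}$ then by subtraction.

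For each summand I would approximate by truncated symbols. Pick $\chi\in C^{\infty}_c(\Rn)$ with $\chi\equiv 1$ near the origin and, for $R>0$, define $a^{\sharp}_R(x,\xi):=\chi(x/R)\chi(\xi/R)a^{\sharp}(x,\xi)$ and analogously $a^{b}_R$. Each truncated symbol is compactly supported in $(x,\xi)$, so the associated operator has a smooth kernel of compact $x$-support; invoking Rellich--Kondrachov on a large ball shows $a^{\sharp}_R(x,D_x)$ and $a^{b}_R(x,D_x)$ are compact as maps $H^{s+m}_p(\Rn)\to H^{s}_p(\Rn)$. The remainders are supported in $\{|x|+|\xi|\geq R\}$, where their weighted $L^{\infty}$-norms tend to zero by assumption, while their higher seminorms stay uniformly bounded in the respective symbol class. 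Interpolating via Lemma~\ref{lemma:InterpolationResult} between the vanishing $L^{\infty}$-seminorm and the bounded higher ones, every seminorm entering Theorem~\ref{thm:BoundednessResultNonSmooth} becomes $o(1)$ as $R\to\infty$, so the remainders tend to zero in operator norm.

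Combining the two approximations, $a^{\sharp}(x,D_x)$ and $a^{b}(x,D_x)$ are each norm limits of compact operators and therefore compact; hence so is their sum $a(x,D_x)$. The admissible range $-(1-\delta)(\tilde m+\tau)<s<\tilde m+\tau$ is precisely the range in which Theorem~\ref{thm:BoundednessResultNonSmooth} provides the requisite boundedness (and thus operator-norm control) for the two symbol classes involved, which is what makes the cut-off argument close. The main technical obstacle I expect is the interpolation step: the hypothesis gives only \emph{pointwise} vanishing of $\<{\xi}^{-m}a(x,\xi)$, so one must carefully propagate this through the symbol-smoothing and through higher derivatives, and ensure that the precise combination of seminorms appearing in Theorem~\ref{thm:BoundednessResultNonSmooth} is dominated by the $L^{\infty}$-smallness of the truncation remainder rather than by its bounded higher seminorms.
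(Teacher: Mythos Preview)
The paper does not prove this lemma at all: immediately after stating it, the authors write that Lemmas~\ref{lemma:MarschallCompactness_H2} and~\ref{lemma:MarschallCompactness_Hp} are special cases of Theorems~3 and~4 in Marschall~\cite{MarschallOnTheBoundednessAndCompactness}, and no further argument is given. So there is no in-paper proof to compare your sketch against; what you are attempting is to reprove from scratch a result the authors simply import.

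Your plan is reasonable in outline, but the obstacle you flag at the end is a genuine gap rather than a bookkeeping detail. The operator-norm control you want to invoke, Theorem~\ref{thm:BoundednessResultNonSmooth}, depends on a finite family of seminorms of the form $\sup_{\xi}\<{\xi}^{-m+|\alpha|}\|\pa{\alpha}a(\cdot,\xi)\|_{C^{\tau}}$ with $|\alpha|\le M$. The hypothesis gives smallness only of $\<{\xi}^{-m}a(x,\xi)$ itself; it says nothing about $\pa{\alpha}a$ for $|\alpha|\ge 1$, and Lemma~\ref{lemma:InterpolationResult} interpolates only in the $x$-variable, so it cannot produce smallness of the $\xi$-derivative seminorms. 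Concretely, in the tail $(1-\chi(\xi/R))a^{\sharp}$ the Leibniz term with no derivative landing on the cutoff is $(1-\chi(\xi/R))\pa{\alpha}a^{\sharp}$, whose $S^m_{1,\gamma}$-seminorm is merely bounded as $R\to\infty$, not $o(1)$; and on the set $\{|x|\ge R,\ |\xi|\text{ bounded}\}$ you would need $\pa{\alpha}a^{\sharp}(x,\xi)\to 0$ as $|x|\to\infty$, which does not follow from the assumption on $a$. Marschall's proof bypasses this by working directly with Littlewood--Paley pieces and proving $L^p$-smallness of the tail blocks rather than going through a black-box symbol-seminorm estimate, which is presumably why the authors cite it instead of reproducing it.

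Two smaller corrections: the order drop for $a^{b}$ coming out of Lemma~\ref{lemma:SymbolSmoothing3}~i) is $(\gamma-\delta)(\tilde m+\tau)$, not $(1-\delta)(\tilde m+\tau)$ as you wrote; and Lemmas~\ref{lemma:SymbolSmoothing3} and~\ref{lemma:SymbolSmoothing2} are stated only for $\tilde m\in\N$, whereas the present lemma allows $\tilde m=0$, so that case would need a separate (easier) treatment.
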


Lemma \ref{lemma:MarschallCompactness_H2} and Lemma \ref{lemma:MarschallCompactness_Hp} are special cases of Theorem 3 and Theorem 4 of \cite{MarschallOnTheBoundednessAndCompactness}. 
By means of those two lemmas we obtain the next two corollaries: 

\begin{kor}\label{kor:CompactPDO_H2}
  Let $0 \leq \delta \leq \rho \leq 1$, $m \in \R$, $M>\frac{n}{2}$ and $\varepsilon>0$. Moreover let $\tilde{m} \in \N_0$ and $0 < \tau <1$ be such that $\tilde{m}+\tau> \frac{1-\rho}{1-\delta} \cdot \frac{n}{2}$ if $\rho<1$. Additionally let  $a \in C^{\tilde{m}, \tau} S^{m-\varepsilon}_{\rho, \delta}(\RnRn;M) \cap C^{\tilde{m}, \tau} \dot{S}^{m-\varepsilon}_{\rho, \delta}(\RnRn;0)$.
  Then for all $s \in \R$ with
  \begin{align*}
    (1-\rho)\cdot \frac{n}{2}-(1-\delta)(\tilde{m}+\tau)<s< \tilde{m}+\tau,
  \end{align*}
  the operator
  \begin{align*}
    a(x,D_x):H^{m+s}_2(\Rn) \rightarrow H^s_2(\Rn) \qquad \text{is compact.}
  \end{align*}
\end{kor}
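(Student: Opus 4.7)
My plan is to reduce the corollary directly to Lemma \ref{lemma:MarschallCompactness_H2}. That lemma takes a symbol in $C^{\tilde m,\tau}S^{m}_{\rho,\delta}(\RnRn;M)$ whose order-$m$ normalization $\<{\xi}^{-m}a(x,\xi)$ vanishes as $|x|+|\xi|\to\infty$, and concludes compactness on the appropriate Bessel potential scale. So the whole task is to verify these two hypotheses for our $a$ with the gain $\varepsilon$ playing the role of the decay in $\xi$.

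First I would note the obvious inclusion $C^{\tilde m,\tau}S^{m-\varepsilon}_{\rho,\delta}(\RnRn;M)\subseteq C^{\tilde m,\tau}S^{m}_{\rho,\delta}(\RnRn;M)$, which is immediate from the defining seminorms since $\<{\xi}^{m-\varepsilon}\leq \<{\xi}^m$. This gives the required membership for Marschall's lemma.

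The main step is the limit $\lim_{|x|+|\xi|\to\infty}\<{\xi}^{-m}a(x,\xi)=0$. From $a\in C^{\tilde m,\tau}S^{m-\varepsilon}_{\rho,\delta}(\RnRn;M)$ I get the uniform bound $|a(x,\xi)|\leq C_0\<{\xi}^{m-\varepsilon}$, while the membership in the dotted class $C^{\tilde m,\tau}\dot S^{m-\varepsilon}_{\rho,\delta}(\RnRn;0)$ yields $|a(x,\xi)|\leq C(x)\<{\xi}^{m-\varepsilon}$ with $C(x)\to0$ as $|x|\to\infty$. Combining these,
\begin{equation*}
\<{\xi}^{-m}|a(x,\xi)|\;\leq\;\min\{C_0,\,C(x)\}\,\<{\xi}^{-\varepsilon}.
\end{equation*}
Along any sequence with $|x_j|+|\xi_j|\to\infty$ I split into the case $|\xi_j|\to\infty$ (where the factor $\<{\xi_j}^{-\varepsilon}$ forces the product to $0$ using the uniform constant $C_0$) and the case $|\xi_j|$ bounded (where then $|x_j|\to\infty$, so $C(x_j)\to0$ kills the product). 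This establishes the required decay.

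With both hypotheses of Lemma \ref{lemma:MarschallCompactness_H2} verified, a direct application gives compactness of $a(x,D_x):H^{m+s}_2(\Rn)\to H^s_2(\Rn)$ in the stated range of $s$. I do not expect any real obstacle here; the only subtlety is recognizing that the dotted-class information is exactly what bridges the gap between the pointwise symbol estimate (which alone would not give decay when $\xi$ stays bounded) and the hypothesis of Marschall's compactness lemma.
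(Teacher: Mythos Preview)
Your proposal is correct and follows essentially the same route as the paper: reduce to Lemma~\ref{lemma:MarschallCompactness_H2} by checking the symbol-class membership (via the trivial inclusion of orders) and the vanishing condition $\<{\xi}^{-m}a(x,\xi)\to 0$ as $|x|+|\xi|\to\infty$. The paper states this in a single sentence, asserting that membership in $C^{\tilde m,\tau}\dot S^{m-\varepsilon}_{\rho,\delta}(\RnRn;0)$ already yields the decay, whereas you spell out the two-case argument (large $|\xi|$ handled by the $\varepsilon$-gain, bounded $|\xi|$ handled by $C(x)\to 0$); your version is simply a more explicit rendering of the same idea.
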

%%%%%%%%%%%%%%%%%%%%%%%%%%%%%%%%%%%%%%%%%%%%
%% Lemma 2 vom Blatt 3a_5 vom 17.02.2017
%%%%%%%%%%%%%%%%%%%%%%%%%%%%%%%%%%%%%%%%%%%%%
\begin{proof}
  Since $a \in  C^{\tilde{m}, \tau}\dot{S}^{m-\varepsilon}_{\rho, \delta}(\RnRn;0)$ implies $|a(x,\xi)|\<{\xi}^{-m}\xrightarrow{|x|+|\xi|\rightarrow \infty} 0$, the claim is a consequence of Lemma \ref{lemma:MarschallCompactness_H2}.
\end{proof}

\begin{kor}\label{kor:CompactPDO_Hp}
  Let $0 \leq \delta \leq 1$, $m \in \R$, $M>n \cdot \max\{ \frac{1}{2}, \frac{1}{p}\}$ where $1<p<\infty$ and $\varepsilon>0$. Moreover let $\tilde{m} \in \N_0$ and $0 < \tau <1$. Additionally let $a \in C^{\tilde{m}, \tau} S^{m-\varepsilon}_{1, \delta}(\RnRn;M) \cap C^{\tilde{m}, \tau} \dot{S}^{m-\varepsilon}_{1, \delta}(\RnRn;0)$.
  Then for all $s \in \R$ with
  \begin{align*}
    -(1-\delta)(\tilde{m}+\tau)<s< \tilde{m}+\tau,
  \end{align*}
  the operator
  \begin{align*}
    a(x,D_x):H^{m+s}_p(\Rn) \rightarrow H^s_p(\Rn) \qquad \text{is compact.}
  \end{align*}
\end{kor}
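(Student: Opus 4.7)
The plan is to mirror the one‑line argument used for Corollary \ref{kor:CompactPDO_H2}: reduce the claim to Lemma \ref{lemma:MarschallCompactness_Hp} (which is the $\rho=1$ analogue in $L^p$ of the compactness lemma used in Corollary \ref{kor:CompactPDO_H2}). Since Lemma \ref{lemma:MarschallCompactness_Hp} is stated for symbols in $C^{\tilde m,\tau}S^m_{1,\delta}(\RnRn;M)$, and the hypothesis here gives a symbol in $C^{\tilde m,\tau}S^{m-\varepsilon}_{1,\delta}(\RnRn;M)\subseteq C^{\tilde m,\tau}S^{m}_{1,\delta}(\RnRn;M)$ (a lower‑order symbol trivially satisfies the higher‑order bounds), the only nontrivial thing I need to verify is the vanishing condition
\begin{equation*}
  \lim_{|x|+|\xi|\to\infty} (1+|\xi|)^{-m}\,a(x,\xi) \;=\; 0.
\end{equation*}

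First I would unpack the hypothesis $a\in C^{\tilde m,\tau}\dot S^{m-\varepsilon}_{1,\delta}(\RnRn;0)$ according to Definition \ref{Def:SlowlyVaryingSymbols}: this gives a bounded function $C_{0,0}(x)$ with $C_{0,0}(x)\to 0$ as $|x|\to\infty$ and
\begin{equation*}
  |a(x,\xi)|\;\leq\; C_{0,0}(x)\,\langle\xi\rangle^{m-\varepsilon}\qquad\text{for all } x,\xi\in\Rn.
\end{equation*}
Combined with $\langle\xi\rangle^{m}\lesssim (1+|\xi|)^{m}$, this yields
\begin{equation*}
  (1+|\xi|)^{-m}|a(x,\xi)|\;\leq\; C\,C_{0,0}(x)\,\langle\xi\rangle^{-\varepsilon}.
\end{equation*}
A short case distinction then gives the required limit: along any sequence $(x_k,\xi_k)$ with $|x_k|+|\xi_k|\to\infty$, either $|\xi_k|\to\infty$ on a subsequence (then $\langle\xi_k\rangle^{-\varepsilon}\to 0$ and $C_{0,0}$ is bounded), or $|\xi_k|$ stays bounded (forcing $|x_k|\to\infty$, so $C_{0,0}(x_k)\to 0$ while $\langle\xi_k\rangle^{-\varepsilon}$ stays bounded). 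Either way the product tends to zero.

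With this observation in hand, I would invoke Lemma \ref{lemma:MarschallCompactness_Hp} directly with the symbol $a$ regarded as an element of $C^{\tilde m,\tau}S^{m}_{1,\delta}(\RnRn;M)$, order $m$, and the given $p$, $\tilde m$, $\tau$, and the assumption $M>n\cdot\max\{1/2,1/p\}$. The range of admissible Sobolev exponents delivered by the lemma is exactly
\begin{equation*}
  -(1-\delta)(\tilde m+\tau)\;<\;s\;<\;\tilde m+\tau,
\end{equation*}
which matches the statement of the corollary, and it yields compactness of $a(x,D_x)\colon H^{m+s}_p(\Rn)\to H^s_p(\Rn)$.

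There is no real obstacle here; the only subtle point is the verification that the hypothesis $a\in C^{\tilde m,\tau}\dot S^{m-\varepsilon}_{1,\delta}(\RnRn;0)$ plus the extra decay $\langle\xi\rangle^{-\varepsilon}$ together deliver the joint vanishing condition required by Lemma \ref{lemma:MarschallCompactness_Hp}, which is resolved by the two‑case argument above. The remainder is a direct appeal to Marschall's compactness lemma.
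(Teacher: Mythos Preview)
Your proposal is correct and follows exactly the paper's approach: the paper's proof is the single line ``Since $a \in C^{\tilde{m}, \tau} \dot{S}^{m-\varepsilon}_{\rho, \delta}(\RnRn;0)$ implies $|a(x,\xi)|\langle\xi\rangle^{-m}\xrightarrow{|x|+|\xi|\rightarrow \infty} 0$, the claim is a consequence of Lemma \ref{lemma:MarschallCompactness_Hp}'', and you have simply spelled out the implication with the two-case argument and the inclusion $S^{m-\varepsilon}_{1,\delta}\subseteq S^{m}_{1,\delta}$.
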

%%%%%%%%%%%%%%%%%%%%%%%%%%%%%%%%%%%%%%%%%%%
%% Lemma 3 vom Blatt 4 vom 17.02.2017 
%%%%%%%%%%%%%%%%%%%%%%%%%%%%%%%%%%%%%%%%%%
\begin{proof}
  Since $a \in C^{\tilde{m}, \tau} \dot{S}^{m-\varepsilon}_{\rho, \delta}(\RnRn;0)$ implies $|a(x,\xi)|\<{\xi}^{-m}\xrightarrow{|x|+|\xi|\rightarrow \infty} 0$, the claim is a consequence of Lemma \ref{lemma:MarschallCompactness_Hp}.
\end{proof}

In order to verify an asymptotic expansion of the product of two double symbols, we need the next theorem. It can be proved by means of the usual verifications of the similar result in the smooth case, see e.g. \cite[Theorem 3.1]{KumanoGo}. For the convenience of the reader, we give a short sketch of the proof. 

\begin{thm}\label{thm:AsymototicExpansion}
  Let $0 \leq \delta \leq \rho \leq 1$, $m_1,m_2 \in \R$, $M_1,M_2 \in \N_0 \cup \{\infty\}$ with $M_1>n+1$, $\tilde{m} \in \N_0$ and $0 < \tau <1$. For $a \in C^{\tilde{m}, \tau} S^{m_1,m_2}_{\rho, \delta}(\RnRnRnRn;M_1,M_2)$
  we define 
  \begin{align*}
    a_L(x,\xi):= \osint e^{-iy\cdot \eta} a(x, \xi+\eta,x+y,\xi)dy \dq \eta \quad \text{for all } x,\xi \in \Rn.
  \end{align*}
  Additionally we set for all $\theta \in [0,1]$ and $\gamma \in \Non$ with $|\gamma| \leq M_1-(n+1)$
  \begin{align*}
    r_{\gamma, \theta}(x, \xi):=\osint  e^{-iy\cdot \eta} \p_{\eta}^{\gamma} D_y^{\gamma} a(x, \xi+\theta \eta,x+y,\xi) dy \dq \eta \quad \text{for all } x, \xi \in \Rn.
  \end{align*}
  Moreover we define $\tilde{M}_k:= \min\{M_1-k-(n+1);M_2\}$ for all $k \leq  M_1-(n+1)$. Then we get for all $N \leq  M_1-(n+1)$, that 
  \begin{align}\label{58e}
    a_L(x, \xi)= \sum_{|\alpha|<N} \frac{1}{\alpha!} \p_{\eta}^{\alpha} D^{\alpha}_{y} a(x,\xi+\eta, x+y, \xi)|_{\eta=y=0} + R_N(x,\xi),
  \end{align}
  where
  \begin{align*}
    R_N(x,\xi):= N \cdot \sum_{|\gamma|=N} \int_0^1\frac{(1-\theta)^{N-1}}{\gamma!}r_{\gamma, \theta}(x,\xi) d\theta \in C^{\tilde{m}, \tau}S^{m_1+m_2-(\rho-\delta)\cdot N}_{\rho, \delta}(\RnRn; \tilde{M}_N)
  \end{align*}
  and
  \begin{align*}
    \left\{ r_{\gamma, \theta}(x,\xi) : |\theta|\leq 1 \right\} \subseteq  C^{\tilde{m}, \tau}S^{m_1+m_2-(\rho-\delta)\cdot N}_{\rho, \delta}(\RnRn; \tilde{M}_N) \quad \text{is bounded.}
  \end{align*}
%    This implies $R_N(x,\xi) \in C^{\tilde{m}, \tau}S^{m_1+m_2-(\rho-\delta)\cdot N}_{\rho, \delta}(\RnRn; \tilde{M}_N)$. \\
   If $\p_{\xi}^{\gamma} D_y^{\gamma}a \in C^{\tilde{m},\tau} \hat{S}^{m_1-\rho,m_2+\delta}_{\rho, \delta}(\RnRnRnRn;M_1-1,M_2)$ for $|\gamma|=1$ then 
   $$R_N(x,\xi) \in  C^{\tilde{m}, \tau}\dot{S}^{m_1+m_2-(\rho-\delta)\cdot N}_{\rho, \delta}(\RnRn; \tilde{M}_N)$$ 
   for all $N \leq  M_1-(n+1)$.
\end{thm}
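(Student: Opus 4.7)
The plan is to reduce the statement to an application of Theorem \ref{thm:aLInCsSm00n} via Taylor expansion in the $\eta$-variable followed by integration by parts in the oscillatory integral. First, I would write
\[
a(x, \xi+\eta, x+y, \xi) = \sum_{|\alpha|<N} \frac{\eta^\alpha}{\alpha!} \p_{\eta}^{\alpha} a(x, \xi+\eta, x+y, \xi)\big|_{\eta=0} + N\sum_{|\gamma|=N} \frac{\eta^\gamma}{\gamma!}\int_0^1 (1-\theta)^{N-1} \p_{\eta}^{\gamma} a(x, \xi+\theta\eta, x+y, \xi)\,d\theta,
\]
insert this into the definition of $a_L$, and use the identity $\eta^{\gamma} e^{-iy\cdot\eta} = (-D_y)^{\gamma} e^{-iy\cdot\eta}$ together with integration by parts in the oscillatory integral (as justified by Theorem \ref{thm:propertiesOsciInt}) to trade the factor $\eta^{\gamma}$ for $D_y^{\gamma}$ acting on the amplitude.

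For the finite sum, each term becomes $\osint e^{-iy\cdot\eta} D_y^{\alpha}\p_{\xi}^{\alpha} a(x,\xi, x+y,\xi)\,dy\dq\eta$, and since the integrand depends on $\eta$ only through $e^{-iy\cdot\eta}$, Remark \ref{bem:PropOsciIntBasic} collapses this to an evaluation at $y=0$, yielding exactly $\p_{\eta}^{\alpha} D_y^{\alpha} a(x,\xi+\eta,x+y,\xi)|_{\eta=y=0}/\alpha!$. For the remainder, I would use Fubini to pull the $\theta$-integral outside of the oscillatory integral (requiring a dominated convergence argument similar to the one used inside the proof of Proposition \ref{prop:OsziInt=Int_n}) and obtain the representation
\[
R_N(x,\xi) = N\sum_{|\gamma|=N} \int_0^1 \frac{(1-\theta)^{N-1}}{\gamma!}\, r_{\gamma,\theta}(x,\xi)\,d\theta.
\]

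To identify the symbol class of $r_{\gamma,\theta}$, I would apply Theorem \ref{thm:aLInCsSm00n} to the family $\{\p_{\eta}^{\gamma} D_y^{\gamma} a : |\gamma|=N\} \subseteq C^{\tilde{m},\tau} S^{m_1-\rho N, m_2+\delta N}_{\rho,\delta}(\RnRnRnRn; M_1-N, M_2)$. That theorem provides the bound uniformly in $\theta\in[0,1]$ and puts $r_{\gamma,\theta}$ into $C^{\tilde{m},\tau} S^{m_1+m_2-(\rho-\delta)N}_{\rho,\delta}(\RnRn; \tilde M_N)$, with the regularity loss of $n+1$ in the $\xi$-argument that explains the definition of $\tilde M_N$. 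Because the $\theta$-integral preserves this symbol class, the claim for $R_N$ follows, and the boundedness of the family $\{r_{\gamma,\theta}:|\theta|\leq 1\}$ is then immediate.

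For the refinement under the hypothesis $\p_{\xi}^{\gamma} D_y^{\gamma} a \in C^{\tilde{m},\tau}\hat{S}^{m_1-\rho, m_2+\delta}_{\rho,\delta}$ for $|\gamma|=1$, I would first show by induction on $|\gamma|$ that the same decay in $x'$ persists for all $|\gamma|\leq N$: applying $\p_{\xi}\p_{x'}$ to a symbol whose $\tilde C_{\alpha,\beta,\beta',\alpha'}(x')$ already vanishes at infinity only rescales and relabels these constants, so the $\hat S$ property propagates. Then the last assertion of Theorem \ref{thm:aLInCsSm00n}, applied to $\p_{\eta}^{\gamma} D_y^{\gamma} a \in \hat S$, gives $r_{\gamma,\theta} \in \dot S$, and the $\dot S$ class is stable under the $\theta$-integration, which yields $R_N \in C^{\tilde m,\tau}\dot S^{m_1+m_2-(\rho-\delta)N}_{\rho,\delta}(\RnRn;\tilde M_N)$. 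The main obstacle I anticipate is the careful justification of the interchange between the $\theta$-integration, the integration by parts, and the oscillatory integral regularisation; everything else is a matter of bookkeeping the regularity indices through Taylor's formula and Theorem \ref{thm:aLInCsSm00n}.
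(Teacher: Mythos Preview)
Your proposal is correct and follows essentially the same route as the paper: Taylor expansion in $\eta$, integration by parts to convert $\eta^\gamma$ into $D_y^\gamma$, a dominated-convergence justification for swapping the $\theta$-integral with the oscillatory integral, and finally an appeal to Theorem \ref{thm:aLInCsSm00n} for the symbol-class statements. The paper carries out the interchange step explicitly via the regularising operators $A^{l}(D_\eta,y)$ and $A^{\tilde l}(D_y,\eta)$ of Subsection~\ref{subsection:SpaceOfAmplitudes} rather than by referring to Proposition~\ref{prop:OsziInt=Int_n}, and it does not spell out the induction for the $\hat S$ refinement (it simply invokes Theorem~\ref{thm:aLInCsSm00n}), but these are presentational differences only.
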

%%%%%%%%%%%%%%%%%%%%%%%%%%%%%%%%%%%%%%%%%%%
%% Theorem 2 vom Blatt 1 vom 18.02.2017
%%%%%%%%%%%%%%%%%%%%%%%%%%%%%%%%%%%%%%%%%%%%%
\begin{proof}
 An application of the Taylor expansion formula to the second variable of $a$ around $\xi$ and integration by parts provides
 \begin{align*}
    a_L(x, \xi) 
% 	      &= \osint e^{-iy\cdot \eta} a(x,\xi+\eta, x+y, \xi) dy \dq \eta \\
	      &= \sum_{|\gamma| <N} \frac{1}{\gamma!} \osint e^{-iy\cdot \eta}  D^{\gamma}_y \p_{\eta}^{\gamma}	a(x,\xi+\eta, x+y, \xi) dy \dq \eta \\
		&\qquad +N \sum_{|\gamma| =N}  \osint e^{-iy\cdot \eta}  \frac{\eta^{\gamma}}{\gamma!}	\int_0^1 (1-\theta)^{N-1} \p_{\eta}^{\gamma} a(x,\xi+\theta \eta, x+y, \xi)d \theta dy \dq \eta. 
 \end{align*}
  Next we need to exchange the oscillatory integral with the integral in the second term of the right side of the previous equality. Hence we choose an arbitrary $\chi \in \s$ with $\chi(0)=1$ and let $\gamma \in \Non$ with $|\gamma| = N$. Now let $l=n+1$ and $\tilde{l}=1+ \lceil \frac{m_1+n}{1-\delta}  \rceil$. Then we obtain due to the Theorem of Fubini and integration by parts using $e^{-iy\cdot \eta}= A^{\tilde{l}}(D_y, \eta) A^{l}(D_{\eta}, y) e^{-iy\cdot \eta}$, see (\ref{eqDefAg}) and (\ref{eqDefAu}) for the definition of $A^l(D_.,.)$, for each $\e>0$:
  \begin{align}\label{57e}
    &\intr \intr \int_0^1  e^{-iy\cdot \eta} \chi(\e y) \chi(\e\eta) \eta^{\gamma} (1-\theta)^{N-1} \p_{\eta}^{\gamma} a(x,\xi+\theta \eta, x+y, \xi)d \theta dy \dq \eta \notag\\
    &\qquad = \int_0^1 (1-\theta)^{N-1}  \intr \intr  e^{-iy\cdot \eta} A^{\tilde{l}}(D_y, \eta) A^{l}(D_{\eta}, y) \left\{ \chi(\e\eta) D_y^{\gamma} \right.\notag\\
    &\qquad \qquad \qquad \qquad \qquad \qquad \qquad \left. \left[ \chi(\e y)\p_{\eta}^{\gamma} a(x,\xi+\theta \eta, x+y, \xi) \right] \right\} dy \dq \eta d \theta.
  \end{align}
  Here the assumptions of the Theorem of Fubini and of integration by parts can be verified. Since $\chi \in \s$, $D^{\alpha}_y \chi(\e y) \rightarrow 0$ for $\e \rightarrow 0$ if $|\alpha| \neq 0$. Hence we get by interchanging the limit and the integration on account of (\ref{57e}) and since the integrand has an $L^1-$majorant:
  \begin{align*}
    & \osint e^{-iy\cdot \eta}  \frac{\eta^{\gamma}}{\gamma!}	\int_0^1 (1-\theta)^{N-1} \p_{\eta}^{\gamma} a(x,\xi+\theta \eta, x+y, \xi)d \theta dy \dq \eta\\
    &=\int_0^1 \frac{(1-\theta)^{N-1}}{\gamma!}  \intr \intr  e^{-iy\cdot \eta} A^{\tilde{l}}(D_y, \eta) A^{l}(D_{\eta}, y)  \left\{ D_y^{\gamma} 
    \p_{\eta}^{\gamma} a(x,\xi+\theta \eta, x+y, \xi)  \right\} dy \dq \eta d \theta \\
    &= \int_0^1   \frac{(1-\theta)^{N-1}}{\gamma!} \osint e^{-iy\cdot \eta}  D_y^{\gamma}  \p_{\eta}^{\gamma} a(x,\xi+\theta \eta, x+y, \xi) dy \dq \eta d \theta,
  \end{align*}
  where the last equality holds because of Theorem \ref{thm:propertiesOsciInt}. Hence (\ref{58e}) holds. The rest of the claim is a consequence of Theorem \ref{thm:aLInCsSm00n}.
\end{proof}

As a consequence of the previous theorem, we obtain

\begin{kor}\label{kor:Eigenschaft2}
  Let $\tilde{m}_1 \in \N$, $0 < \tau_1 <1$, $m_1, m_2 \in \R$, $0 \leq \delta < \rho \leq 1$; $M_1, M_2 \in \N_0 \cup \{ \infty \}$ with $M_1>n+1$. Additionally let $N:= M_1-(n+1)$. For $a_1 \in C^{\tilde{m}_1, \tau_1} S^{m_1}_{\rho, \delta}(\RnRn; M_1)$ and $a_2 \in S^{m_2}_{\rho, \delta}(\RnRn; M_2)$ we define
  \begin{align*}
    a(x, \xi):= \osint e^{-iy\cdot \eta} a_1(x, \xi+\eta) a_2(x+y, \xi) dy \dq \eta
  \end{align*}
  and for all $k \in \N$ with $k \leq N$, $\gamma \in \Non$ with $|\gamma| =N$ and $\theta \in[0,1]$ we set
  \begin{itemize}
    \item $a_1\sharp_k a_2 (x, \xi):= \sum\limits_{|\gamma| <k} \frac{1}{\gamma !} \pa{\gamma} a_1(x,\xi) D_x^{\gamma} a_2(x,\xi)$,
    \item $r_{\gamma, \theta}(x, \xi):= \osint  e^{-iy\cdot \eta} \p_{\eta}^{\gamma}  a_1(x, \xi+\theta \eta) D_y^{\gamma} a_2( x+y,\xi) dy \dq \eta$ 
  \end{itemize}
  for all $x, \xi \in\Rn$. Moreover we define $R_k: \RnRn \rightarrow \C$ as in Theorem \ref{thm:AsymototicExpansion}. Then 
  $$a(x, \xi) = a_1\sharp_k a_2 (x, \xi) + R_k(x, \xi) \qquad \text{for all } x, \xi \in \Rn$$
  and with $\tilde{M}_k:= \min \{ M_1-k+1; M_2\}$ and $\tilde{N}_k:= \min \{M_1-k-(n+1); M_2\}$ we obtain 
  \begin{itemize}
    \item $a_1\sharp_k a_2 (x, \xi) \in C^{\tilde{m}_1, \tau_1} S^{m_1+m_2}_{\rho, \delta}(\RnRn; \tilde{M}_k)$,
    \item $R_k (x, \xi) \in C^{\tilde{m}_1, \tau_1} S^{m_1+m_2-(\rho-\delta)k}_{\rho, \delta}(\RnRn; \tilde{N}_k)$.
  \end{itemize}
  In particular we have $a(x, \xi) \in  C^{\tilde{m}_1, \tau_1} S^{m_1+m_2}_{\rho, \delta}(\RnRn; \tilde{N}_1)$. If we even have $a_2 \in \tilde{S}^{m_2}_{\rho, \delta}(\RnRn; M_2)$, then  $R_k (x, \xi) \in C^{\tilde{m}_1, \tau_1} \dot{S}^{m_1+m_2-(\rho-\delta)k}_{\rho, \delta}(\RnRn; \tilde{N}_k)$ for all  $k \in \N$ with $k \leq N$. 
\end{kor}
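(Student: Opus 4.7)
The plan is to reduce the corollary to Theorem \ref{thm:AsymototicExpansion} by taking the tensor-product double symbol $b(x,\xi,x',\xi'):= a_1(x,\xi)\,a_2(x',\xi')$. First I would check that $b$ lies in the double symbol class $C^{\tilde{m}_1,\tau_1}S^{m_1,m_2}_{\rho,\delta}(\RnRnRnRn;M_1,M_2)$. This is essentially a book-keeping exercise: applying $\p_x^{\beta}\pa{\alpha}\p_{x'}^{\beta'}\p_{\xi'}^{\alpha'}$ to $b$ produces a product $(\p_x^{\beta}\pa{\alpha}a_1)\cdot(\p_{x'}^{\beta'}\p_{\xi'}^{\alpha'}a_2)$, and the required bounds follow directly from the symbol estimates on $a_1$ and $a_2$ together with the elementary inequality $\<{\xi}^{m_1-\rho|\alpha|+\delta|\beta|}\<{\xi'}^{m_2-\rho|\alpha'|}\leq \<{\xi}^{m_1-\rho|\alpha|+\delta|\beta|}\<{\xi'}^{m_2-\rho|\alpha'|}\<{\xi;\xi'}^{\delta|\beta'|}$ since $|\beta'|$ derivatives hit the smooth factor $a_2$ only. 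The required Hölder norm estimate in $x$ follows from Lemma \ref{lemma:PropertyHoelderSpaces} applied to the product, noting that $a_2$ is independent of $x$.

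Once $b$ is identified as a double symbol, the oscillatory integral in the corollary is precisely $a_L^{1}$ of Theorem \ref{thm:AsymototicExpansion} applied to $b$. The expansion
\begin{align*}
a(x,\xi)=\sum_{|\gamma|<k}\frac{1}{\gamma!}\p^{\gamma}_{\eta}D_y^{\gamma} b(x,\xi+\eta,x+y,\xi)\big|_{\eta=y=0}+R_k(x,\xi)
\end{align*}
then immediately reduces to $a_1\sharp_k a_2(x,\xi)+R_k(x,\xi)$ because $\p^{\gamma}_{\eta}D_y^{\gamma}b(x,\xi+\eta,x+y,\xi)|_{\eta=y=0}=\pa{\gamma}a_1(x,\xi)\,D_x^{\gamma}a_2(x,\xi)$. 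The class membership of $a_1\sharp_k a_2$ in $C^{\tilde{m}_1,\tau_1}S^{m_1+m_2}_{\rho,\delta}(\RnRn;\tilde{M}_k)$ is a direct application of the Leibniz rule to each summand together with the symbol estimates, and the class membership of $R_k$ in $C^{\tilde{m}_1,\tau_1}S^{m_1+m_2-(\rho-\delta)k}_{\rho,\delta}(\RnRn;\tilde{N}_k)$ is exactly the remainder statement of Theorem \ref{thm:AsymototicExpansion}.

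For the last assertion, assume in addition $a_2\in \tilde{S}^{m_2}_{\rho,\delta}(\RnRn;M_2)$. Then for every $\gamma\in\Non$ with $|\gamma|=1$ the function $D_{x'}^{\gamma}a_2(x',\xi')$ lies in $\dot{S}^{m_2+\delta}_{\rho,\delta}(\RnRn;M_2)$, so the associated constants in the symbol estimate tend to $0$ as $|x'|\to\infty$. Consequently the tensor product $\pa{\gamma}b(x,\xi,x',\xi')=\pa{\gamma}a_1(x,\xi)\cdot a_2(x',\xi')$ and its differentiated variant $\pa{\gamma}D_y^{\gamma}b=\pa{\gamma}a_1\cdot D_{x'}^{\gamma}a_2$ have the decay in $x'$ required to place them in $C^{\tilde{m}_1,\tau_1}\hat{S}^{m_1-\rho,m_2+\delta}_{\rho,\delta}(\RnRnRnRn;M_1-1,M_2)$. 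The second part of Theorem \ref{thm:AsymototicExpansion} then yields $R_k\in C^{\tilde{m}_1,\tau_1}\dot{S}^{m_1+m_2-(\rho-\delta)k}_{\rho,\delta}(\RnRn;\tilde{N}_k)$ for all $k\leq N$.

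The main obstacle I expect is not the passage through Theorem \ref{thm:AsymototicExpansion} itself, which is essentially automatic, but the careful verification that the double symbol $b$ indeed satisfies the Hölder-and-decay estimates defining $\hat{S}$ rather than merely $S$ in the refined setting: one must track that it is the $x'$-variable (and not $x$) that carries the decay, since it is the $a_2$-factor, not the $a_1$-factor, which provides the slowly varying behaviour. Once this distinction is respected, the corollary follows without additional analytic input.
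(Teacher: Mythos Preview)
Your approach is correct and essentially identical to the paper's: both recognize the tensor product $a_1(x,\xi)a_2(x',\xi')$ as a double symbol in $C^{\tilde m_1,\tau_1}S^{m_1,m_2}_{\rho,\delta}(\RnRnRnRn;M_1,M_2)$, invoke Theorem~\ref{thm:AsymototicExpansion} for the expansion and the remainder estimate (including the refined $\hat S$-hypothesis when $a_2\in\tilde S^{m_2}_{\rho,\delta}$), and verify the class membership of $a_1\sharp_k a_2$ directly via the Leibniz rule and Lemma~\ref{lemma:PropertyHoelderSpaces}. The paper carries out the last verification slightly more explicitly (also invoking Remark~\ref{bem:AbschatzungNichtglattesSymbol} for the intermediate $C^k_b$-bounds), but this is exactly the content behind your phrase ``direct application of the Leibniz rule together with the symbol estimates''.
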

%%%%%%%%%%%%%%%%%%%%%%%%%%%
%%siehe Theorem 3 von Blatt 5 vom 18.02.2017
%%%%%%%%%%%%%%%%%%%%%%%%%%%%%%%
\begin{proof}
  Since $a_1(x, \xi)a_2(y, \xi') \in C^{\tilde{m}_1, \tau_1} S_{\rho, \delta}^{m_1, m_2}(\RnRnRnRn; M_1, M_2)$ we just need to show $a_1\sharp_k a_2 (x, \xi) \in C^{\tilde{m}_1, \tau_1} S^{m_1+m_2}_{\rho, \delta}(\RnRn; \tilde{M}_k)$, the rest is a consequence of Theorem \ref{thm:AsymototicExpansion}. Let $k \in \N$ with $k \leq N$ be arbitrary and $\alpha, \beta, \gamma \in \Non$ with $|\gamma| <k$, $|\beta| \leq \tilde{m}_1$ and $|\alpha| \leq \tilde{M}_k$. The choice of $a_1$ and $a_2$ provides by means of the Leibniz rule
  \begin{align}\label{60e}
    |\pa{\alpha}D_x^{\beta} \left\{ \pa{\gamma} a_1(x, \xi) D_x^{\gamma} a_2(x, \xi) \right\}| 
    \leq C_{\alpha,\beta, \gamma}(x) \<{\xi}^{m_1+m_2-(\rho-\delta)|\gamma|-\rho|\alpha|+\delta|\beta|}   
  \end{align}
  for all  $x, \xi \in \Rn$, where $C_{\alpha,\beta, \gamma}(x) $ is bounded.
%   and $C_{\alpha,\beta, \gamma}(x) \xrightarrow{|x|\rightarrow \infty} 0$ if $a_2 \in \tilde{S}^{m_2}_{\rho, \delta}(\RnRn; M_2)$ and $|\gamma + \beta| \neq 0$.
  On account of (\ref{glattesSymbolIstNichtglatt}) we know, that $D_x^{\gamma}a_2(x, \xi) \in C^{\tilde{m}_1, \tau_1}S^{m_2+\delta|\gamma|}_{\rho, \delta}(\RnRn; M_2)$. Hence an application of Lemma \ref{lemma:PropertyHoelderSpaces}, Lemma \ref{bem:AbschatzungNichtglattesSymbol} and the Leibniz rule provides
  \begin{align}\label{61e}
    \| \pa{\alpha} \left\{ \pa{\gamma} a_1(x, \xi) D_x^{\gamma}a_2(x, \xi) \right\}\|_{C^{\tilde{m}_1, \tau_1}(\Rn_x)} \leq C_{\alpha, \tilde{m}_1, \gamma} \<{\xi}^{m_1+m_2-(\rho-\delta)|\gamma|-\rho|\alpha|+\delta (\tilde{m}_1+ \tau_1)}.
  \end{align}
  A combination of (\ref{60e}) and (\ref{61e}) yields 
  \begin{align*}
    \pa{\gamma} a_1(x, \xi) D_x^{\gamma}a_2(x, \xi) &\in C^{\tilde{m}_1, \tau_1} S^{m_1+m_2-(\rho-\delta)|\gamma|}(\RnRn; \tilde{M}_k) \\
    &\subseteq  C^{\tilde{m}_1, \tau_1} S^{m_1+m_2}(\RnRn; \tilde{M}_k).
  \end{align*}
  Hence $a_1\sharp_k a_2 (x, \xi) \in C^{\tilde{m}_1, \tau_1} S^{m_1+m_2}_{\rho, \delta}(\RnRn; \tilde{M}_k)$.
\end{proof}

With the previous corollary at hand, we now can show the next statement: 

\begin{thm}\label{thm:Eigenschaften1}
  Let $\tilde{m}_1, \tilde{m}_2 \in \N_0$, $0<\tau_1, \tau_2 <1$, $m_1,m_2 \in \R$ and $0\leq \delta <\rho \leq 1$. Furthermore let $p=2$ if $\rho \neq 1$ and $1<p<\infty$ else. We choose a $\theta \notin \N_0$ with $\theta \in  \left(0,(\tilde{m}_2 + \tau_2)(\rho-\delta) \right)$, $\tilde{\e} \in \left( 0, \min\{ (\rho-\delta)\tau_2; (\rho-\delta)(\tilde{m}_2+\tau_2)-\theta; \theta)\} \right)$ and define $(\tilde{m},\tau):= (\lfloor s \rfloor, s-\lfloor s \rfloor)$, where $s:=\min\{ \tilde{m}_1 + \tau_1; \tilde{m}_2+ \tau_2- \lfloor \theta \rfloor\}$. Additionally let $M_1, M_2 \in \N_0 \cup\{\infty\}$ with $M_1>(n+1)+\lceil \theta \rceil + n\max\{ \frac{1}{2}, \frac{1}{p} \}$ and $M_2> n\cdot \max\{ \frac{1}{2}, \frac{1}{p} \}$.
  Moreover let $a_1 \in C^{\tilde{m}_1,\tau_1} S^{m_1}_{\rho,\delta}(\RnRn;M_1)$ and $a_2 \in C^{\tilde{m}_2,\tau_2} \tilde{S}^{m_2}_{\rho,\delta}(\RnRn;M_2)$ such that
  \begin{align*}
    a_2(x, \xi) \xrightarrow{|x|\rightarrow \infty} a_2(\infty, \xi) \qquad \text{for all } \xi \in \Rn.
  \end{align*}
  Then we get for each $s \in \R$ fulfilling $(1-\rho)\frac{n}{2}-(1-\delta)(\tilde{m}_2+\tau_2)+\theta+\tilde{\e} < s+m_1 < \tilde{m} + \tau_2$ and $(1-\rho)\frac{n}{2}-(1-\delta)(\tilde{m}+\tau)+\frac{\tilde{m}+\tau}{\tilde{m}_2+\tau_2}(\theta+\tilde{\e}) <s < \tilde{m}+\tau$, that
  \begin{align*}
    a_1(x, D_x)a_2(x, D_x)-(a_1 \sharp_{\lceil \theta \rceil} a_2)(x, D_x): H_p^{s+m_1+m_2}(\Rn) \rightarrow H^s_p(\Rn) \quad \text{is compact.}
  \end{align*}
  where $a_1 \sharp_{\lceil \theta \rceil}a_2(x,\xi)$ is defined as in Corollary \ref{kor:Eigenschaft2}.
%   Here 
%   \begin{align*}
%     a_1 \sharp_{\lceil \theta \rceil}a_2(x,\xi):= \sum_{|\alpha|<\lceil \theta \rceil} \frac{1}{\alpha!} \pa{\alpha} a_1(x,\xi) D^{\alpha}_x a_2(x,\xi)
%   \end{align*}
\end{thm}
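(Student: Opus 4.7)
The plan is to apply symbol-smoothing to $a_2$ with the parameter $\gamma := \delta + (\theta+\tilde{\varepsilon})/(\tilde m_2+\tau_2)$, which the assumption $\tilde{\varepsilon}<(\rho-\delta)(\tilde m_2+\tau_2)-\theta$ places in $(\delta,\rho)$. This specific choice of $\gamma$ is tuned to match the $s$-ranges in the statement via the identity $(1-\gamma)(\tilde m+\tau)=(1-\delta)(\tilde m+\tau)-\frac{\tilde m+\tau}{\tilde m_2+\tau_2}(\theta+\tilde{\varepsilon})$. Lemma~\ref{lemma:SymbolSmoothing1} then gives the decomposition
\[
  a_2 \;=\; a_2(\infty,\cdot) \;+\; b^{\sharp} \;+\; a_2^b,
\]
where $b:=a_2-a_2(\infty,\cdot)\in C^{\tilde m_2,\tau_2}\dot S^{m_2}_{\rho,\delta}(\RnRn;M_2)$ satisfies $b^{\sharp}\in\dot S^{m_2}_{\rho,\gamma}(\RnRn;M_2)$ by Lemma~\ref{lemma:SymbolSmoothing2}(ii), and $a_2^b\in C^{\tilde m_2,\tau_2}\tilde S^{m_2-\theta}_{\rho,\gamma}(\RnRn;M_2)\cap C^{\tilde m_2,\tau_2}\dot S^{m_2-\theta}_{\rho,\gamma}(\RnRn;0)$ by construction (the cancellation $(\gamma-\delta)(\tilde m_2+\tau_2)-\tilde{\varepsilon}=\theta$ is forced by our choice of $\gamma$). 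I would then split both $a_1(x,D_x)a_2(x,D_x)$ and $(a_1\sharp_{\lceil\theta\rceil}a_2)(x,D_x)$ along this decomposition and handle the three resulting differences separately.

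The piece involving $a_2(\infty,\cdot)$ collapses: since this symbol does not depend on $x$, $D_x^\gamma a_2(\infty,\cdot)=0$ for $|\gamma|\geq 1$ so $a_1\sharp_{\lceil\theta\rceil}a_2(\infty,\cdot)=a_1\cdot a_2(\infty,\cdot)$; on the operator side, $a_2(\infty,D_x)$ is a Fourier multiplier, whence $a_1(x,D_x)a_2(\infty,D_x)$ equals the pseudodifferential operator with symbol $a_1(x,\xi)a_2(\infty,\xi)$ exactly. The piece involving $b^{\sharp}$ is the natural application of Corollary~\ref{kor:Eigenschaft2} with parameters $(\rho,\gamma)$: using the embedding $C^{\tilde m_1,\tau_1}S^{m_1}_{\rho,\delta}\subseteq C^{\tilde m_1,\tau_1}S^{m_1}_{\rho,\gamma}$ (valid for $\gamma>\delta$) and the fact that $b^{\sharp}\in\tilde S^{m_2}_{\rho,\gamma}$, the corollary produces
\[
  a_1(x,D_x)b^{\sharp}(x,D_x) - (a_1\sharp_{\lceil\theta\rceil}b^{\sharp})(x,D_x) \;=\; R^{(2)}(x,D_x)
\]
with $R^{(2)}\in C^{\tilde m_1,\tau_1}\dot S^{m_1+m_2-(\rho-\gamma)\lceil\theta\rceil}_{\rho,\gamma}$. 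The strict gain $(\rho-\gamma)\lceil\theta\rceil>0$ together with the $\dot S$ property lets Corollary~\ref{kor:CompactPDO_H2} (or~\ref{kor:CompactPDO_Hp} when $\rho=1$) conclude compactness $H^{s+m_1+m_2}_p\to H^s_p$ in the stated range of $s$; the hypotheses on $M_1,M_2$ are exactly what is needed to feed that corollary.

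The main obstacle is the third piece, the one involving $a_2^b$, because $a_2^b$ is still non-smooth in $x$ and hence Corollary~\ref{kor:Eigenschaft2} cannot be applied directly to the composition $a_1(x,D_x)a_2^b(x,D_x)$. My plan is to show that the two operators $a_1(x,D_x)a_2^b(x,D_x)$ and $(a_1\sharp_{\lceil\theta\rceil}a_2^b)(x,D_x)$ are \emph{each} compact on their own, so their difference is too. For the finite sum $a_1\sharp_{\lceil\theta\rceil}a_2^b=\sum_{|\gamma|<\lceil\theta\rceil}\frac{1}{\gamma!}\partial_\xi^\gamma a_1\cdot D_x^\gamma a_2^b$, I expand term by term: each summand is a product of a bounded factor in $C^{\tilde m_1,\tau_1}S^{m_1-\rho|\gamma|}_{\rho,\delta}$ and, thanks to $a_2^b\in\tilde S$, a decaying factor in $C^{\tilde m_2-|\gamma|,\tau_2}\dot S^{m_2-\theta+\gamma|\gamma|}_{\rho,\gamma}$, hence lies in a $C^{\tilde m,\tau}\dot S^{m_1+m_2-\theta-(\rho-\gamma)|\gamma|}_{\rho,\gamma}$ class of order strictly less than $m_1+m_2$, and Corollary~\ref{kor:CompactPDO_H2}/\ref{kor:CompactPDO_Hp} gives compactness with the lower bound on $s$ matching the one in the theorem thanks to our choice of $\gamma$. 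For the genuine composition I use $a_2^b\in C^{\tilde m_2,\tau_2}\dot S^{m_2-\theta}_{\rho,\gamma}(\RnRn;0)$ to obtain compactness of $a_2^b(x,D_x):H^{s+m_1+m_2}_p\to H^{s+m_1}_p$ directly from the same corollary --- this is precisely where the condition $(1-\rho)\frac{n}{2}-(1-\delta)(\tilde m_2+\tau_2)+\theta+\tilde{\varepsilon}<s+m_1<\tilde m+\tau_2$ is used --- and then precompose with the bounded operator $a_1(x,D_x):H^{s+m_1}_p\to H^s_p$ from Theorem~\ref{thm:BoundednessResultNonSmooth}; bounded composed with compact is compact. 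Summing the three contributions gives the claim.
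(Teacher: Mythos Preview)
Your proof is correct and follows essentially the same strategy as the paper: choose $\gamma=\delta+(\theta+\tilde{\varepsilon})/(\tilde m_2+\tau_2)$, apply symbol-smoothing to $a_2$, handle the smooth part via Corollary~\ref{kor:Eigenschaft2} with parameters $(\rho,\gamma)$, and treat the $a_2^b$ contribution by showing that $a_1(x,D_x)a_2^b(x,D_x)$ and each term of $(a_1\sharp_{\lceil\theta\rceil}a_2^b)(x,D_x)$ are individually compact. The only difference is cosmetic: you split $a_2^{\sharp}$ further as $a_2(\infty,\cdot)+b^{\sharp}$ and dispose of the $x$-independent piece by the Fourier-multiplier identity, whereas the paper keeps $a_2^{\sharp}$ intact and invokes Lemma~\ref{lemma:SymbolSmoothing2} (parts i) and ii) combined) to obtain $a_2^{\sharp}\in\tilde S^{m_2}_{\rho,\gamma}(\RnRn;M_2)$ directly, which already suffices for Corollary~\ref{kor:Eigenschaft2}; your extra step is harmless but unnecessary.
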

%%%%%%%%%%%%%%%%%%%%%%%%%%
%% siehe Theorem vom Blatt 1 vom 21.03.2017
%%%%%%%%%%%%%%%%%%%%%%%%%%%

\begin{bem}
  If we weaken the condition of the second symbol in the previous theorem to $a_2 \in C^{\tilde{m}_2,\tau_2} S^{m_2}_{\rho,\delta}(\RnRn;M_2)$, then we can show in the same way as in the proof of Theorem \ref{thm:Eigenschaften1}, the compactness of 
  \begin{align*}
     a_1(x, D_x)a_2(x, D_x)-(a_1 \sharp_{\lceil \theta \rceil} a_2)(x, D_x): H_p^{s+m_1+m_2-\e}(\Rn) \rightarrow H^s_p(\Rn)
  \end{align*}
  for some $\e>0$. 
\end{bem}

\begin{proof}[Proof of Theorem \ref{thm:Eigenschaften1}]
  Let $1<p<\infty$ if $\rho=1$ and $p=2$ else. Setting $\gamma:= \delta + \frac{\theta+\tilde{\e}}{\tau_2+\tilde{m}_2}$ Corollary \ref{kor:Eigenschaft2} provides for $k \in \N$ with $k \leq M_1-(n+1)$ and $\tilde{M}_k:= \min \{ M_1-k+1; M_2 \}$ that the symbol $a_1\sharp_k a_2 $ 
%   defined by
%   \begin{align*}
%     a_1\sharp_k a_2 (x, \xi):= \sum_{|\alpha| <k} \frac{1}{\alpha !} \pa{\alpha} a_1(x,\xi) D_x^{\alpha} a_2(x,\xi) \qquad \text{for all } x, \xi \in \Rn
%   \end{align*}
  has the following properties if $a_2 \in \tilde{S}^{m_2}_{\rho, \delta}(\RnRn; M_2)$:
  \begin{itemize}
    \item[i)] $ a_1\sharp_k a_2 \in C^{\tilde{m}_1, \tau_1} S^{m_1 + m_2}_{\rho, \gamma}(\RnRn; \tilde{M}_k)$,
    \item[ii)] $\sigma(a_1(x, D_x)a_2(x, D_x)) -  a_1\sharp_k a_2 \in  C^{\tilde{m}_1, \tau_1} \dot{S}^{m_1 + m_2-(\rho-\delta)\cdot k}_{\rho, \gamma}(\RnRn; \tilde{N}_k)$,
  \end{itemize}
  where $\tilde{N}_k:= \min\{ M_1-k-(n+1);M_2 \}$ and 
  $$\sigma(a_1(x, D_x)a_2(x, D_x)):= \osint e^{-iy \cdot \eta} a_1(x, \xi+\eta) a_2(x+y, \eta)dy \dq \eta.$$ Now let $a_2 \in C^{\tilde{m}_2, \tau_2} \tilde{S}^{m_2}_{\rho, \delta}(\RnRn; M_2)$ be arbitrary. By means of Lemma \ref{lemma:SymbolSmoothing1} and Lemma \ref{lemma:SymbolSmoothing2} we get
  \begin{itemize}
    \item[iii)] $a_2^b \in C^{\tilde{m}_2, \tau_2} \tilde{S}_{\rho, \gamma}^{m_2-\theta}(\RnRn; M_2) \cap C^{\tilde{m}_2, \tau_2} \dot{S}_{\rho, \gamma}^{m_2-\theta}(\RnRn; 0) $,
    \item[iv)] $a_2^{\sharp} \in \tilde{S}^{m_2}_{\rho, \gamma}(\RnRn; M_2)$,
    \item[v)] $a_2(x,\xi)= a_2^b(x, \xi)  + a_2^{\sharp}(x,\xi)$ for all $x, \xi \in \Rn$,
%     \item[vi)] %$D_x^{\beta} a_2^# \in \dot{S}_{\rho, \gamma}^{m_2 + \delta |\beta|}(\RnRn; M_2)$ for all $\beta \in \Non$ with $|\beta| \neq 0$ and $|\beta| \leq \tilde{m}_2$, then 
% 		$a_2^{\sharp} \in \tilde{S}_{\rho, \gamma}^{m_2}(\RnRn; M_2)$.
  \end{itemize}
  Now let $s$ be as in the assumptions. Due to Corollary \ref{kor:CompactPDO_Hp} and Corollary \ref{kor:CompactPDO_H2} we know that
  \begin{align*}
    a_2^b(x,D_x): H^{s+m_1 +m_2}_p(\Rn) \rightarrow H^{s+m_1}_p(\Rn) \qquad \text{is compact}.
  \end{align*}
  On account of the boundedness of $a_1(x,D_x): H^{s+m_1}_p(\Rn) \rightarrow H^s_p(\Rn)$, see Theorem \ref{thm:BoundednessResultNonSmooth}, we obtain
  \begin{align}\label{5e}
    a_1(x,D_x) a_2^b(x,D_x): H^{s+m_1 +m_2}_p(\Rn) \rightarrow H^{s}_p(\Rn) \qquad \text{is compact}.
  \end{align}
  %%%%%%%%%%%%%%%%%%%%%%%%%%%%%%%%%%%%%%%%%%%%%%%%%%%%%%%%%%%%%(Begin Erklärung)
  %%für eine genauere Rechnung siehe auch claim 1 vom Blatt 6 vom 17.02.2017
%   We denote $\tilde{m}:= \min\{ \tilde{m}_2- \lceil \theta \rceil + 1; \tilde{m}_1\}$ and $\tau:= \tau_1$ if $\tilde{m}_1<\tilde{m}_2- \lceil \theta \rceil + 1$, $\tau:= \tau_2$ if $\tilde{m}_1>\tilde{m}_2- \lceil \theta \rceil + 1$ and $\tau := \min\{\tau_1, \tau_2\}$ else. 
  Then we obtain by means of the Leibniz rule,  Lemma \ref{lemma:PropertyHoelderSpaces} and $ a_2^b \in C^{\tilde{m}_2, \tau_2} \dot{S}_{\rho, \gamma}^{m_2-\theta}(\RnRn; 0)$ for all $\alpha \in \Non$ with $|\alpha| < \lceil \theta \rceil$:
  \begin{align}\label{1e}
    &\pa{\alpha} a_1(x, \xi) D_x^{\alpha} a_2^b(x,\xi) \in C^{\tilde{m}, \tau} S^{m_1+m_2-\theta}_{\rho, \gamma}(\RnRn; \min\{ M_1-|\alpha|; M_2\}) \notag \\
    &\qquad \qquad \qquad \qquad \qquad \qquad \cap C^{\tilde{m}, \tau}\dot{S}^{m_1+m_2-\theta}_{\rho, \gamma}(\RnRn; 0).
  \end{align}
  Due to (\ref{1e}), Lemma \ref{kor:CompactPDO_Hp} and Lemma \ref{kor:CompactPDO_H2} provides for all $\alpha \in \Non$ with $|\alpha| < \lceil \theta \rceil$:
  \begin{align}\label{2e}
    \left( \pa{\alpha} a_1 D_x^{\alpha} a_2^b \right)(x, D_x): H_p^{s+m_1+m_2}(\Rn) \rightarrow H_p^s(\Rn) \qquad \text{ is compact.}
  \end{align}
  Since $a_1 \in C^{\tilde{m_1}, \tau_1} S^{m_1}_{\rho, \gamma}(\RnRn; M_1)$ and $a_2^{\sharp} \in \tilde{S}^{m_2}_{\rho, \gamma}(\RnRn; M_2)$, we obtain together with (v) and (i), (ii) applied on $a_2^{\sharp}$ instead on $a_2$
  \begin{align}\label{4e}
    &a_1(x,D_x) a_2(x,D_x) - \left( a_1 \sharp_{\lceil \theta \rceil} a_2 \right)(x,D_x) \notag\\ 
%     &\qquad = a_1(x,D_x)a_2^b(x,D_x) + \left( a_1 \sharp_{\lceil \theta \rceil} a_2^{\sharp} \right)(x,D_x)  - \left( a_1 \sharp_{\lceil \theta \rceil} a_2 \right)(x,D_x) + R_{\lceil \theta \rceil }(x, D_x) \notag \\
    &\qquad =a_1(x,D_x)a_2^b(x,D_x) -\sum_{|\alpha| < \lceil \theta \rceil} \frac{1}{\alpha !} (\pa{\alpha} a_1 D_x^{\alpha} a_2^b)(x,D_x)  + R_{\lceil \theta \rceil }(x, D_x),
  \end{align}
  where
  \begin{align*}
    R_{\lceil \theta \rceil }(x, \xi) \in C^{\tilde{m}_1, \tau_1} \dot{S}_{\rho, \gamma}^{m_1+m_2-(\rho-\delta) \lceil \theta \rceil} (\RnRn; \tilde{N}_{ \lceil \theta \rceil}).
  \end{align*}
  Because of  Lemma \ref{kor:CompactPDO_Hp} and Lemma \ref{kor:CompactPDO_H2}, we get 
  \begin{align}\label{3e}
     R_{\lceil \theta \rceil }(x, D_x): H^{s+m_1 +m_2}_p(\Rn) \rightarrow H^{s}_p(\Rn) \qquad \text{is compact}.
  \end{align}
  A combination of (\ref{4e}), (\ref{5e}), (\ref{2e})  and (\ref{3e}) yields the claim.
  %%%%%%%%%%%%%%%%%%%%%%%%%%%%%%%%%%%%%%%%%%%%%%%%%%%%%%%%%%%%%(Ende Erklärung)
\end{proof}
%%%%%%%%%%%%%%%%%%%%%%%%%%
%%siehe Blatt 1 vom 21.03.2017
%%%%%%%%%%%%%%%%%%%%%%%%%%%

In order to verify the main result of our paper, we use

\begin{lemma}\label{lemma:HilfslemmaFuerBeweisDerFredholmproperty}
   Let $\tilde{m}, N \in \N$, $0<\tau <1$, $0\leq \delta <\rho \leq 1$ and $M \in \N_0 \cup\{\infty\}$. Additionally let $a \in C^{\tilde{m},\tau}\tilde{S}^{0}_{\rho,\delta}(\RnRn;M;\mathcal{L}(\C^N))$ be such that property $1)$ of Theorem \ref{thm:Fredholmproperty} hold. Moreover let $\psi \in C^{\infty}_b(\Rn)$ be such that $\psi(x)=0$ if $|x|\leq 1$ and $\psi(x)=1$ if $|x|\geq 2$. Then $b:\RnRn \rightarrow \C^{N\times N}$ defined by
  \begin{align*}
    b(x,\xi):= \psi(R^{-2}(|x|^2+ |\xi|^2))a(x,\xi)^{-1} \qquad \text{for all } x,\xi \in \Rn
  \end{align*}
  is an element of  $C^{\tilde{m}, \tau} \tilde{S}^0_{\rho, \delta}(\RnRn;M;\mathcal{L}(\C^N))$.
\end{lemma}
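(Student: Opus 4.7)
The plan is to write $b=\chi\,a^{-1}$ with $\chi(x,\xi):=\psi(R^{-2}(|x|^2+|\xi|^2))$. Since $\psi(z)=0$ for $|z|\le 1$, on $\supp\chi$ we have $|x|^2+|\xi|^2\ge R^2$, hence $|x|+|\xi|\ge R$, and hypothesis~(1) of Theorem~\ref{thm:Fredholmproperty} gives $|\det a(x,\xi)|\ge C_0$; Cramer's formula together with the bound $|a(x,\xi)|\le C$ coming from $a\in\tilde{S}^0_{\rho,\delta}$ then shows that $a^{-1}$ is well defined and uniformly bounded on $\supp\chi$. My strategy is first to establish that $a^{-1}$ restricted to $\supp\chi$ satisfies symbol-class estimates matching those of $a$, and then to combine this with $\chi$ by the Leibniz rule. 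Everything is carried out entrywise in the matrix components; products preserve the Hölder-and-symbol bounds thanks to Lemma~\ref{lemma:PropertyHoelderSpaces}.

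The derivatives of $a^{-1}$ are handled by the inductive formula $\p(a^{-1})=-a^{-1}(\p a)a^{-1}$. It follows that $\pa{\alpha}D_x^\gamma(a^{-1})$ is, on $\supp\chi$, a finite linear combination of products of the form
\begin{align*}
 a^{-1}\bigl(\pa{\alpha_1}D_x^{\gamma_1}a\bigr)a^{-1}\cdots a^{-1}\bigl(\pa{\alpha_k}D_x^{\gamma_k}a\bigr)a^{-1}
\end{align*}
with $\sum\alpha_j=\alpha$, $\sum\gamma_j=\gamma$ and $(\alpha_j,\gamma_j)\neq(0,0)$. Each factor obeys $|\pa{\alpha_j}D_x^{\gamma_j}a|\le C\<{\xi}^{-\rho|\alpha_j|+\delta|\gamma_j|}$ because $a\in C^{\tilde{m},\tau}\tilde{S}^0_{\rho,\delta}$, and multiplying through yields $|\pa{\alpha}D_x^\gamma a^{-1}|\le C\<{\xi}^{-\rho|\alpha|+\delta|\gamma|}$ on $\supp\chi$. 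Repeated application of Lemma~\ref{lemma:PropertyHoelderSpaces} delivers the corresponding Hölder seminorms in $x$. Crucially, when $|\gamma|\ge 1$, \emph{every} term in the expansion contains at least one factor $\pa{\alpha_j}D_x^{\gamma_j}a$ with $|\gamma_j|\ge 1$, which by the $\tilde{S}^0_{\rho,\delta}$-assumption on $a$ satisfies its bound with a prefactor $C_{\alpha_j,\gamma_j}(x)\xrightarrow{|x|\to\infty}0$; boundedness of the other factors and of $a^{-1}$ transfers this vanishing to $\pa{\alpha}D_x^\gamma a^{-1}$.

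Finally I apply Leibniz to $b=\chi\,a^{-1}$ to obtain
\begin{align*}
 \pa{\alpha}D_x^\gamma b=\sum_{\alpha'\le\alpha,\,\gamma'\le\gamma}\binom{\alpha}{\alpha'}\binom{\gamma}{\gamma'}\bigl(\pa{\alpha'}D_x^{\gamma'}\chi\bigr)\bigl(\pa{\alpha-\alpha'}D_x^{\gamma-\gamma'}a^{-1}\bigr).
\end{align*}
The summand with $\alpha'=\gamma'=0$ is controlled by the previous step. In every other summand $|\alpha'|+|\gamma'|\ge 1$, so $\pa{\alpha'}D_x^{\gamma'}\chi$ is a smooth, uniformly bounded function supported in the compact set $\{R^2\le|x|^2+|\xi|^2\le 4R^2\}$; in particular it vanishes for $|x|\ge 2R$, producing a prefactor with compact $x$-support (which trivially vanishes at infinity), and the required $\xi$-bound is automatic because $\<{\xi}$ is bounded on that compact set. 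Combining both cases shows $b\in C^{\tilde{m},\tau}S^0_{\rho,\delta}$, and for every $1\le|\gamma|\le\tilde{m}$ a prefactor vanishing as $|x|\to\infty$ is available, yielding $D_x^\gamma b\in C^{\tilde{m}-|\gamma|,\tau}\dot{S}^{\delta|\gamma|}_{\rho,\delta}$, i.e.\ $b\in C^{\tilde{m},\tau}\tilde{S}^0_{\rho,\delta}$.

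The main obstacle is purely bookkeeping in the matrix-valued setting: one must simultaneously track the $\xi$-symbol bounds and the $C^{\tilde{m}-|\gamma|,\tau}$-norms in $x$ through every combination of Leibniz and the inverse-rule expansion. The analytic content is always the same---Lemma~\ref{lemma:PropertyHoelderSpaces} for products together with the chain/inverse rule for $a^{-1}$---so no new estimates beyond those already available are needed.
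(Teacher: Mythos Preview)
Your argument is correct and follows the same route as the paper: expand derivatives of $a^{-1}$ via the iterated inverse rule into sums of products of derivatives of $a$ interleaved with copies of $a^{-1}$, use the $\tilde{S}$-property to see that for $|\gamma|\ge 1$ at least one factor carries a coefficient vanishing as $|x|\to\infty$, and then combine with $\chi$ by Leibniz, exploiting that derivatives of $\chi$ are supported in a compact shell. The only organizational difference is that the paper first treats $N=1$ and recovers the matrix case via Cramer's rule, whereas you work directly with the matrix inverse formula; both are equivalent here, and your version is arguably more direct.
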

%%%%%%%%%%%%%%%%%%%%%%%%%%%%%%%%%%%%%%
%% dieses Lemma entspricht Aussage (2) im Beweis von Theorem vom Blatt 1 vom 22.03.2017
%%%%%%%%%%%%%%%%%%%%%%%%%%%%%%%%%%%%%%
\begin{proof}
%   Then $b:\RnRn \rightarrow \C$ defined by
%   \begin{align*}
%     b(x,\xi):= \psi(R^{-1}x)\psi(R^{-1}\xi)a(x,\xi)^{-1} \qquad \text{for all } x,\xi \in \Rn
%   \end{align*}
First we assume that $N=1$.
  We remark that $b(x,\xi)$ is $0$ if $|x|^2+|\xi|^2 \leq R^2$ and $b(x,\xi)=1$, if $|x|^2+|\xi|^2 \geq 2R^2$.
%   First of all we would like to show that
%   \begin{align}\label{e1}
%     b(x,\xi) \in C^{\tilde{m}, \tau} \tilde{S}^0_{\rho, \delta}(\RnRn;M)
%   \end{align}
  Using property $1)$ of $a$ we can verify  
  \begin{align}\label{e2}
    \|a(.,\xi)^{-1}\|_{C^{0}(\Rn)} \leq C \qquad \text{and} \qquad
    \|a(.,\xi)^{-1}\|_{C^{0,\tau}(\Rn)} \leq C %\|a(x,\xi)\|_{C^{0,\tau}(\Rn_x)}
  \end{align}
  for all $|\xi|\geq R$. Due to the product rule we can write each derivative $\pa{\alpha}D_x^{\beta}a(x,\xi)^{-1}$ ($\alpha,\beta \in \Non$ with $|\alpha| \leq M$, $|\beta| \leq \tilde{m}$) as the sum of terms of the form
  \begin{align*}
    \pa{\alpha_1}D_x^{\beta_1}a(x,\xi)\cdot \ldots \cdot \pa{\alpha_k}D_x^{\beta_k}a(x,\xi) \cdot a(x,\xi)^{-l},
  \end{align*}
  where $\alpha_1+ \ldots + \alpha_k =\alpha$ and $ \beta_1 + \ldots + \beta_k=\beta \in \Non$, $k,l \in \N$. By means of Lemma \ref{lemma:PropertyHoelderSpaces}, inequality (\ref{e2}), property $1)$ and $a \in C^{\tilde{m},\tau}\tilde{S}^{0}_{\rho,\delta}(\RnRn;M)$ we get
  \begin{align*}
    &\| \pa{\alpha_1}D_x^{\beta_1}a(x,\xi)\cdot \ldots \cdot \pa{\alpha_k}D_x^{\beta_k}a(x,\xi) \cdot a(x,\xi)^{-l}\|_{C^{0,\tau}(\Rn_x)} \leq C_{\alpha, \beta} \<{\xi}^{-\rho |\alpha| + \delta(|\beta| + \tau)}\\
    &| \pa{\alpha_1}D_x^{\beta_1}a(x,\xi)\cdot \ldots \cdot \pa{\alpha_k}D_x^{\beta_k}a(x,\xi) \cdot a(x,\xi)^{-l}| \leq C_{\alpha, \beta}(x) \<{\xi}^{-\rho |\alpha| + \delta|\beta|}
  \end{align*}
  for all $x,\xi \in \Rn$ with $|\xi|\geq R$. Here $C_{\alpha, \beta}(x)$ is bounded and $C_{\alpha, \beta}(x) \xrightarrow{|x| \rightarrow \infty} 0$ if $|\beta| \neq 0$. Hence we obtain for all $\alpha, \beta \in \Non$ with $|\alpha| \leq M$ and $|\beta|\leq \tilde{m}$:
  \begin{align}\label{e3}
    \|\pa{\alpha} a(x,\xi)^{-1}\|_{C^{m,\tau}(\Rn_x)} &\leq  C_{\alpha, \tilde{m}} \<{\xi}^{-\rho |\alpha| + \delta(\tilde{m} + \tau)} \quad \forall \xi \in \Rn \text{ with } |\xi|\geq R, \\ \label{e4}
    |\pa{\alpha} D_x^{\beta} a(x,\xi)^{-1}| &\leq C_{\alpha, \beta}(x) \<{\xi}^{-\rho |\alpha| + \delta|\beta|}  \quad \forall x,\xi \in \Rn \text{ with } |x|^2+|\xi|^2 \geq R^2. 
  \end{align}
   Here $C_{\alpha, \beta}(x)$ is bounded and $C_{\alpha, \beta}(x) \xrightarrow{|x| \rightarrow \infty} 0$ if $|\beta| \neq 0$. 
   Now let $\alpha, \beta \in \Non$ with $|\alpha| \leq M$ and  $|\beta|\leq \tilde{m}$ be arbitrary. On account of the product rule and the definition of $\psi$, we obtain
   \begin{align}\label{e5}
%       &\|\pa{\alpha} b(x,\xi)\|_{C^{\tilde{m}, \tau}(\Rn_x)}=0 &\qquad \text{for all } x,\xi \in \Rn \text{ with } |\xi| \leq R \\\label{e7}
      &|\pa{\alpha} D_x^{\beta} b(x,\xi)| = 0 &\qquad \text{for all } x,\xi \in \Rn \text{ with }|x|^2+ |\xi|^2\leq R^2 
   \end{align}
   Now let $\xi \in \Rn$ with $0 \leq |\xi|^2 \leq 2R^2$. Then we have for all $\alpha_1, \beta_1 \in \Non$, that  $\<{\xi}^{\rho|\alpha_1|-\delta|\beta_1|} \leq C_R$. Together with  (\ref{e3}) and (\ref{e4}) an application of the product rule and Lemma \ref{lemma:PropertyHoelderSpaces} provides
   \begin{align}\label{e10}
      \|\pa{\alpha} D_x^{\beta} b(x,\xi)\|_{C^{0,\tau}(\Rn_x)} \leq C_{\alpha, \beta, R} \<{\xi}^{-\rho|\alpha| + \delta(|\beta|+\tau)}
   \end{align}
   where $ C_{\alpha, \beta, R}$ is independent of $\xi \in \Rn$ with $0 \leq |\xi|^2 \leq 2R^2$. Moreover we obtain for all  $x, \xi \in \Rn$ with $R^2 \leq |x|^2+|\xi|^2 \leq 2R^2$:
   \begin{align}\label{e8}
      |\pa{\alpha} D_x^{\beta} b(x,\xi)| &\leq \sum_{\substack{ \alpha_1 + \alpha_2=\alpha \\ \beta_1+\beta_2=\beta}} C_{\alpha_1, \beta_1} \left|\pa{\alpha_1} D_x^{\beta_1}  \psi(R^{-2}(|x|^2+|\xi|^2)) \right| \left| \pa{\alpha_2} D_x^{\beta_2}a(x, \xi)^{-1}\right| \nonumber\\
      &\leq C_{\alpha, \beta, R}(x) \<{\xi}^{-\rho|\alpha| + \delta|\beta|},
   \end{align}
    where $ C_{\alpha, \beta, R}(x)$ is independent of $\xi \in \Rn$ with $R^2 \leq |\xi|^2 \leq 2R^2$ and bounded with respect to $x$. 
%     On account of $\beta_1 \in \Non$ with $|\beta_1| \neq 0$, we have 
%       $D^{\beta_1}_x \psi(R^{-1}x)=0$ for all $|x| \geq 2R$.
%    This implies together with (\ref{e4}) the following property of $C_{\alpha, \beta, R}(x)$ in (\ref{e8}):
%    \begin{align}\label{e9}
%       C_{\alpha, \beta, R}(x) \xrightarrow{|x|\rightarrow \infty}0 \qquad \text{if } |\beta| \neq 0.
%    \end{align}
   Now let $\xi \in \Rn$ with $|\xi|^2 \geq 2R^2$. Then $\psi(R^{-2}(|x|^2+|\xi|^2))=1$. Hence we obtain by means of (\ref{e3}) 
   \begin{align}\label{e11}
     \|\pa{\alpha} D_x^{\beta} b(x,\xi)\|_{C^{0,\tau}(\Rn_x)} \leq C_{\alpha, \beta, R} \<{\xi}^{-\rho|\alpha| + \delta(|\beta|+\tau)},
   \end{align}
   where $ C_{\alpha, \beta, R}$ is independent of $\xi \in \Rn$ with $|\xi|^2 \geq 2R^2$. Moreover (\ref{e4}) implies for all $x, \xi \in \Rn$ with $|x|^2 + |\xi|^2 \leq 2R^2$
   \begin{align}\label{e12}
      |\pa{\alpha} D_x^{\beta} b(x,\xi)| =  \left| \pa{\alpha} D_x^{\beta}a(x, \xi)^{-1}\right|
      \leq C_{\alpha, \beta, R}(x) \<{\xi}^{-\rho|\alpha| + \delta|\beta|},
   \end{align}
   where $ C_{\alpha, \beta, R}(x)$ is bounded, independent of $\xi \in \Rn$ with $|\xi|^2 \geq 2R^2$ and $ C_{\alpha, \beta, R}(x) \xrightarrow{|x|\rightarrow \infty}0$ if $|\beta| \neq 0$.
   Now a combination of (\ref{e5}), (\ref{e10}), (\ref{e8}), (\ref{e11}) and (\ref{e12}) provides the claim: For all $\alpha, \beta \in \Non$ with $|\alpha| \leq N$, $|\beta| \leq \tilde{m}$ we have
   \begin{align*}
      \|\pa{\alpha} b(x,\xi)\|_{C^{\tilde{m}, \tau}(\Rn_x)} = \max_{|\gamma|\leq \tilde{m}} \|\pa{\alpha} D_x^{\gamma} b(x,\xi)\|_{C^{0, \tau}(\Rn_x)} \leq C_{\alpha, \tilde{m}, R} \<{\xi}^{-\rho|\alpha|+ \delta(\tilde{m}+\tau)}
   \end{align*}
    for all $\xi \in \Rn$ and 
    \begin{align*}
      |\pa{\alpha} D_x^{\gamma} b(x,\xi)|\leq C_{\alpha, \tilde{m}, R}(x) \<{\xi}^{-\rho|\alpha|+ \delta|\beta|} \qquad \text{for all } x, \xi \in \Rn,
    \end{align*}
    where $ C_{\alpha, \tilde{m}, R}(x)$ is bounded and $ C_{\alpha, \beta, R}(x) \xrightarrow{|x|\rightarrow \infty}0$ if $|\beta| \neq 0$.

Finally, let us consider the generell case $N\in \N$. Then the case $N=1$ implies that $\tilde{b}$ defined by $\tilde{b}(x,\xi):= \psi(R^{-2}(|x|^2+ |\xi|^2))\det (a(x,\xi))^{-1}$ for all $x,\xi\in\Rn$ is an element of $C^{\tilde{m}, \tau} \tilde{S}^0_{\rho, \delta}(\RnRn;M)$. Now the statement of the lemma easily follows from Cramer's rule and the fact that $C^{\tilde{m}, \tau} \tilde{S}^0_{\rho, \delta}(\RnRn;M)$ is closed with respect to pointwise multiplication.
\end{proof}

Using the main idea of the analog result in the smooth case, see \cite[Theorem 5.16]{KumanoGo}, we now are able to verify Theorem \ref{thm:Fredholmproperty}:

\begin{proof}[Proof of Theorem \ref{thm:Fredholmproperty}]
 First of all we assume, that $m=0$. In order to prove the claim let us choose $\psi \in C^{\infty}_b(\Rn)$ such that $\psi(x)=0$ if $|x|\leq 1$ and $\psi(x)=1$ if $|x|\geq 2$. Then $b:\RnRn \rightarrow \mathcal{L}(\C^N)$ defined by
  \begin{align*}
    b(x,\xi):= \psi(R^{-2}(|x|^2+|\xi|^2))a(x,\xi)^{-1} \qquad \text{for all } x,\xi \in \Rn
  \end{align*}
  is an element of $C^{\tilde{m}, \tau} \tilde{S}^0_{\rho, \delta}(\RnRn;M;\mathcal{L}(\C^N))$ on account of Lemma \ref{lemma:HilfslemmaFuerBeweisDerFredholmproperty}. Using Theorem 
  \ref{thm:Eigenschaften1} we obtain for all $s \in \R$ with $(1-\rho)\frac{n}{2} - (1-\delta)(\tilde{m} + \tau)+\theta+\tilde{\e} <s < \tilde{m} + \tau$ and $1<p<\infty$ with $p=2$ if $\rho \neq 1$:
  \begin{itemize}
    \item[i)] $a(x,D_x)b(x,D_x) = \op(ab)+R_1$,
    \item[ii)] $b(x,D_x)a(x,D_x) = \op(ab) + R_2$,
  \end{itemize}
  where
  \begin{align*}
    R_1, R_2: H^s_p(\Rn)^{N} \rightarrow H^s_p(\Rn)^{N} \qquad \text{are compact.}
  \end{align*}
  By means of the Leibniz formula and Lemma \ref{lemma:PropertyHoelderSpaces} we get
  \begin{align*}
    a(x,\xi)b(x, \xi)-I \in C^{\tilde{m}, \tau}S^0_{\rho, \delta}(\RnRn; M;\mathcal{L}(\C^N)).
  \end{align*}
  An application of Theorem \ref{lemma:MarschallCompactness_H2} in the case $\rho \neq 1$ and Theorem \ref{lemma:MarschallCompactness_Hp} else provides, that
  \begin{align}\label{ee10}
    \op(ab-I):H^s_p(\Rn)^{N} \rightarrow H^s_p(\Rn)^{N} \qquad \text{is compact}
  \end{align}
  for all $(1-\delta)\frac{n}{2}-(1-\delta)(\tilde{m} + \tau)+\theta + \tilde{\e} < s < \tilde{m} + \tau$, where $p=2$ if $\rho \neq 1$.
  Together with i) we obtain: 
  \begin{align*}
    a(x,D_x)b(x,D_x) 
    = \op(ab)-Id + Id +R_1
    = Id + \left[\op(ab-I) + R_1 \right],
  \end{align*}
  where 
  \begin{align*}
    \op(ab-I) + R_1:H^s_p(\Rn)^{N} \rightarrow H^s_p(\Rn)^{N} \qquad \text{is compact}
  \end{align*}
  for all $(1-\delta)\frac{n}{2}-(1-\delta)(\tilde{m} + \tau)+\theta + \tilde{\e} < s < \tilde{m} + \tau$, where $p=2$ if $\rho \neq 1$.
  Analogus we obtain on account of ii) and (\ref{ee10})
  \begin{align*}
    b(x,D_x)a(x,D_x)
    = \op(ab)-Id + Id +R_2
    = Id + \left[\op(ab-1) + R_2 \right],
  \end{align*}
  where 
  \begin{align*}
    \op(ab-I) + R_2:H^s_p(\Rn)^{N} \rightarrow H^s_p(\Rn)^{N} \qquad \text{is compact}
  \end{align*}
  for all $(1-\delta)\frac{n}{2}-(1-\delta)(\tilde{m} + \tau)+\theta + \tilde{\e} < s < \tilde{m} + \tau$, where $p=2$ if $\rho \neq 1$. This implies the claim for $m=0$. For general $m \in \R$, we use that $\<{D_x}^{m}: H^{m+s}_p(\Rn) \rightarrow H^s_p(\Rn)$ is a Fredholm operator for all $s \in \R$ since it is invertible. An application of the case $m=0$ to 
  \begin{align*}
    \tilde{a}(x,\xi):= a(x, \xi) \<{\xi}^{-m} \in C^{\tilde{m},\tau}\tilde{S}^{0}_{\rho,\delta}(\RnRn;M;\mathcal{L}(\C^N))
  \end{align*}
  yields that $\tilde{a}(x,D_x): H^s_p(\Rn) \rightarrow H^s_p(\Rn)$ is a Fredholm operator. Since the composition of two Fredholm operators is a Fredholm operator again, we finally obtain the statement of this theorem on account of 
  \begin{align*}
    a(x, D_x) = \tilde{a}(x,D_x)\text{diag}( \<{D_x}^{m}, \ldots, \<{D_x}^{m}): H^{m+s}_p(\Rn)^{N} \rightarrow H^s_p(\Rn)^{N}, % \qquad \text{is a Fredholm operator.}
  \end{align*}
  where $\text{diag}( \<{D_x}^{m}, \ldots, \<{D_x}^{m})$ is the $N \times N$ diagonal operator matrix with diagonal entries $\<{D_x}^{m}$.
% siehe auch Satz vom Blatt 1 vom 24.01.2017
\end{proof}

%\bibliographystyle{plain}
%\bibliography{mybibtex} %\nocite{*}

\begin{thebibliography}{10}

\bibitem{PDO}
H.~Abels.
\newblock {\em Pseudodifferential and Singular Integral Operators: An
  Introduction with Applications}.
\newblock De Gruyter, Berlin/Boston, 2012.

\bibitem{Paper1}
H.~{Abels} and C.~{Pfeuffer}.
\newblock {Characterization of Non-Smooth Pseudodifferential Operators}.
\newblock {\em Journal of Fourier Analysis and Applications}, Mar 2017.

\bibitem{Cordes1972}
H.~O. Cordes.
\newblock Beispiele von {P}seudo-{D}ifferentialoperator-{A}lgebren.
\newblock {\em Applicable Anal.}, 2:115--129, 1972.
\newblock Collection of articles dedicated to Wolfgang Haack on the occasion of
  his 70th birthday.

\bibitem{FanWong}
Q.~Fan and M.~W. Wong.
\newblock A characterization of {F}redholm pseudo-differential operators.
\newblock {\em J. London Math. Soc. (2)}, 55(1):139--145, 1997.

\bibitem{Hoermander1979}
L.~H\"ormander.
\newblock The {W}eyl calculus of pseudodifferential operators.
\newblock {\em Comm. Pure Appl. Math.}, 32(3):360--444, 1979.

\bibitem{Hoermander1994}
L.~H\"ormander.
\newblock {\em The analysis of linear partial differential operators. {III}},
  volume 274 of {\em Grundlehren der Mathematischen Wissenschaften [Fundamental
  Principles of Mathematical Sciences]}.
\newblock Springer-Verlag, Berlin, 1994.
\newblock Pseudo-differential operators, Corrected reprint of the 1985
  original.

\bibitem{Illner}
R.~Illner.
\newblock On algebras of pseudo differential operators in {$L^{p}({\bf
  R}^{n})$}.
\newblock {\em Comm. Partial Differential Equations}, 2(4):359--393, 1977.

\bibitem{KohnNirenberg}
J.~J. Kohn and L.~Nirenberg.
\newblock An algebra of pseudo-differential operators.
\newblock {\em Comm. Pure Appl. Math.}, 18:269--305, 1965.

\bibitem{KumanoGo}
H.~Kumano-Go.
\newblock {\em Pseudo-Differential Operators}.
\newblock MIT Press, Cambridge, Massachusetts, and London, 1974.

\bibitem{Koeppl}
D.~Köppl.
\newblock {\em Pseudodifferentialoperatoren mit nichtglatten Koeffizienten auf
  Mannigfaltigkeiten}.
\newblock Diploma thesis, University Regensburg, 2011.

\bibitem{MarschallOnTheBoundednessAndCompactness}
J.~Marschall.
\newblock On the boundedness and compactness of nonregular pseudo-differential
  operators.
\newblock {\em Math. Nachr.}, 175:231--262, 1995.

\bibitem{MolahajlooWong}
S.~{Molahajloo} and M.W. {Wong}.
\newblock {Ellipticity, Fredholmness and spectral invariance of
  pseudo-differential operators on ${{\mathbb S}^1}$.}
\newblock {\em {J. Pseudo-Differ. Oper. Appl.}}, 1(2):183--205, 2010.

\bibitem{Nistor}
V.~Nistor.
\newblock Pseudodifferential operators on non-compact manifolds and analysis on
  polyhedral domains.
\newblock In {\em Spectral geometry of manifolds with boundary and
  decomposition of manifolds}, volume 366 of {\em Contemp. Math.}, pages
  307--328. Amer. Math. Soc., Providence, RI, 2005.

\bibitem{Diss}
C.~Pfeuffer.
\newblock Characterization of non-smooth pseudodifferential operators.
\newblock {urn:nbn:de:bvb:355-epub-31776. available in the internet:
  http://epub.uni-regensburg.de/31776/ [23.06.2015]}, May 2015.

\bibitem{Schrohe1992}
E.~{Schrohe}.
\newblock {Spectral invariance, ellipticity, and the Fredholm property for
  pseudodifferential operators on weighted Sobolev spaces.}
\newblock {\em {Ann. Global Anal. Geom.}}, 10(3):237--254, 1992.

\bibitem{Sun}
S.~H. Sun.
\newblock A {B}anach algebra approach to the {F}redholm theory of
  pseudodifferential operators.
\newblock {\em Sci. Sinica Ser. A}, 27(4):337--344, 1984.

\bibitem{Taylor1971}
M.~E. Taylor.
\newblock Gelfand theory of pseudo differential operators and hypoelliptic
  operators.
\newblock {\em Trans. Amer. Math. Soc.}, 153:495--510, 1971.

\bibitem{Taylor2}
M.E. Taylor.
\newblock {\em Pseudodifferential Operators and Nonlinear PDE}.
\newblock Birkhäuser Boston, Basel, Berlin, 1991.

\end{thebibliography}

\end{document}